\newcommand\reallywidehat[1]{%
	\savestack{\tmpbox}{\stretchto{%
			\scaleto{%
				\scalerel*[\widthof{\ensuremath{#1}}]{\kern-.6pt\bigwedge\kern-.6pt}%
				{\rule[-\textheight/2]{1ex}{\textheight}}
			}{\textheight}%
		}{0.5ex}}%
	\stackon[1pt]{#1}{\tmpbox}%
}
\newcommand{\R}{\mathbb{R}}
\newcommand{\C}{\mathbb{C}}
\newcommand{\N}{\mathbb{N}}
\newcommand{\K}{\mathbb{K}}
\newcommand{\supp}{\mbox{supp}\,}
\newcommand{\dist}{\mbox{dist}\,}
\newtheorem{theorem}{Theorem}[section]
\newtheorem{corollary}[theorem]{Corollary}
\newtheorem{proposition}[theorem]{Proposition}
\theoremstyle{definition}
\newtheorem{remark}[theorem]{Remark}
\newtheorem{definition}[theorem]{Definition}
\newtheorem{example}[theorem]{Example}
\title[Power bounded weighted composition operators]{Power bounded weighted composition operators on function spaces defined by local properties}
\author[T.\ Kalmes]{T.\ Kalmes}
\address{Technische Universit\"at Chemnitz, Fakult\"at f\"ur Mathematik, 09107 Chemnitz, Germany}
\email{thomas.kalmes@mathematik.tu-chemnitz.de}
\begin{document}

\begin{abstract}
	We study power boundedness and related properties such as mean ergodicity for (weighted) composition operators on function spaces defined by local properties. As a main application of our general approach we characterize when (weighted) composition operators are power bounded, topologizable, and (uniformly) mean ergodic on kernels of certain linear partial differential operators including elliptic operators as well as non-degenerate parabolic operators. Moreover, under mild assumptions on the weight and the symbol we give a characterization of those weighted composition operators on the Fr\'echet space of continuous functions on a locally compact, $\sigma$-compact, non-compact Hausdorff space which are generators of strongly continuous semigroups on these spaces.\\
	
	\noindent Keywords: Weighted composition operator; Power bounded operator; Mean ergodic operator; Topologizable operator; Kernel of differential operator; Strongly continuous semigroups on Fr\'echet spaces\\
	
	\noindent MSC 2010: 47B33, 47B38, 47A35
\end{abstract}

\maketitle

\section{Introduction}

The aim of this article is to present a general approach to power boundedness and topologizability of weighted composition operators $C_{w,\psi}(f)=w\cdot(f\circ \psi)$ acting on locally convex spaces of scalar valued functions $f$ which are defined by local properties. 

Recall that a continuous linear operator $T$ on a Hausdorff locally convex space $E$ is power bounded precisely when the set of all its iterates is equicontinuous. This notion is closely connected with $T$ being mean ergodic, i.e.\ with the property that for every $x\in E$ the sequence $(T_{[n]}(x))_{n\in\N}$ of Ces\`aro means
$$T_{[n]}(x)=\frac{1}{n}\sum_{m=1}^n T^m(x)$$
converges; in the Banach space setting, see for example \cite{Krengel}, \cite{FonfLinWojtaszczyk01}, and references therein, for the setting of more general locally convex spaces, see \cite{Yosida}, \cite{AlbaneseBonetRicker09}, \cite{AlbaneseBonetRicker09b}, and references therein.

Likewise to power boundedness of an operator being a sufficient condition for mean ergodicity in many situations, our interest in topologizable operators, or more precisely, $m$-topologizable operators (for the precise definitions, see section \ref{power boundedness} below) comes from the fact that due to a recent result by Goli\'nska and Wegner \cite{GolinskaWegner}, $m$-topologizability of an operator implies that it generates a uniformly continuous operator semigroup. It should be noted that although the definitions and most basic results for semigroups of operators on locally convex spaces are the same as for Banach spaces (see e.g.\ \cite{Yosida}) not every continuous linear operator on a locally convex space generates a strongly continuous semigroup, see \cite{FrerickJordaKalmesWengenroth}.

Recently, mean ergodicity, power boundedness, and topologizablity of (weighted) composition operators and multiplication operators on various function spaces have been investigated by several authors, see e.g.\ \cite{BeltranGomezJordaJornet16b}, \cite{BeltranGomezJordaJornet16}, \cite{BonetDomanski11}, \cite{BonetDomanski11b}, \cite{BonetJordaRodriguez18}, \cite{BonetRicker09}, \cite{GomezJordaJornet16} \cite{Wolf12a}, \cite{Wolf12b}, \cite{Wolf15}.

The purpose of this article is to give a general framework for studying power boundedness etc.\ of weighted composition operators. In section \ref{local function spaces} we recall the notion of general locally convex sheaves of functions which gives the appropriate general framework for our objective. This general approach enables us to give a necessary condition of topologizability in section \ref{power boundedness} which is also sufficient provided that the space of functions under consideration carries the compact-open topology (see Corollary \ref{characterizing topologizability for compact-open}) or the standard $C^r$-topology in case of spaces of continuously differentiable functions (see Corollary \ref{characterizing topologizabilty for smooth}). These results also permit to characterize power boundedness in these cases in terms of the weight $w$ and symbol $\psi$ of the operator (see Theorems \ref{power boundedness for compact-open} and \ref{power boundedness for smooth}, respectively) as well as (uniform) mean ergodicity for unweighted composition operators on a large class of function spaces which are Fr\'echet-Montel spaces when equipped with the compact-open topology (see Corollary \ref{mean ergodic for compact-open}).

As a main application of our general results, in section \ref{kernels of differential operators} we characterize power boundedness of weighted composition operators on kernels of certain (hypoelliptic) partial differential operators in $C^\infty(X)$ for open subsets $X$ of $\R^d$ in terms of the weight and the symbol of the operator (see Theorems \ref{power bounded elliptic} and \ref{one-dimensional characteristics}, respectively). Moreover, for (unweighted) composition operators we show that the notions of topologizability, power boundedness and (uniform) mean ergodicity coincide on kernels of these partial differential operators. In particular, the results of this section generalize results obtained in \cite{BeltranGomezJordaJornet16} and \cite{BonetDomanski11}, where the special case of the Cauchy-Riemann operator was considered.

In the final section \ref{generators} we characterize, under mild additional assumptions on the weight and the symbol, those weighted composition operators which are generators of strongly continuous operator semigroups on the space of continuous functions $C(X)$ equipped with the compact-open topology, where $X$ is a locally compact, $\sigma$-compact, non-compact Hausdorff space.\\

Results about dynamical properties of weighted composition operators like transitivity/hypercyclicity and (weak) mixing in the same general setup of function spaces defined by local properties and applications to concrete function spaces have been investigated in \cite{Kalmes17}.\\ 

Throughout the paper, we use standard notation and terminology from functional analysis. For anything related to functional analysis which is not explained in the text we refer the reader to \cite{MeiseVogt}. Moreover, we use common notation from the theory of linear partial differential operators. For this we refer the reader to \cite{Hoermander}.

By an open, relatively compact exhaustion $(X_n)_{n\in\N}$ of a topological space $X$ we understand a sequence of open subsets of $X$ such that $\overline{X}_n\subseteq X_{n+1}$ with compact closure $\overline{X}_n$ for all $n\in\N$ such that $\cup_{n\in\N} X_n=X$.

\section{Function spaces defined by local properties}\label{local function spaces}

In order to deal with weighted composition operators on several function spaces at once we use the notion of sheaves. Here and in the sequel let $\K\in\{\R,\C\}$.

\begin{definition}\label{sheaf}
\begin{rm}
	For a locally compact, $\sigma$-compact, non-compact Hausdorff space $\Omega$ let $\mathscr{F}$ be a \textit{sheaf of functions} on $\Omega$, i.e.\
	\begin{itemize}
		\item For every open subset $X\subseteq\Omega$ we have a vector space $\mathscr{F}(X)$ of $\K$-valued functions such that whenever $Y\subseteq\Omega$ is another open set with $Y\subseteq X$ the restriction mapping
		$$r_X^Y:\mathscr{F}(X)\rightarrow\mathscr{F}(Y),f\mapsto f_{|Y}$$
		is well-defined.
		
		\item (Localization) For an open set $X\subseteq\Omega$, for every open cover $(X_\iota)_{\iota\in I}$ of $X$, and for each $f,g\in\mathscr{F}(X)$ with $f_{|X_\iota}=g_{|X_\iota} (\iota\in I)$ we have $f=g$.
		
		\item (Gluing) For an open set $X\subseteq\Omega$, for every open cover $(X_\iota)_{\iota\in I}$ of $X$, and for all $(f_\iota)_{\iota\in I}\in\prod_{\iota\in I}\mathscr{F}(X_\iota)$ with $f_{\iota|X_\iota\cap X_\kappa}=f_{\kappa|X_\iota\cap X_\kappa}\, (\iota,\kappa\in I)$ there is $f\in\mathscr{F}(X)$ with $f_{|X_\iota}=f_\iota\,(\iota\in I)$.
	\end{itemize}
	It follows immediately from the above properties that for every open subset $X\subseteq\Omega$ and each open, relatively compact exhaustion $(X_n)_{n\in\N_0}$ of $X$ the spaces $\mathscr{F}(X)$ and the projective limit $\mbox{proj}_{\leftarrow n}(\mathscr{F}(X_n),r_{X_{n+1}}^{X_n})$, i.e.\ the subspace
	$$\{(f_n)_{n\in\N_0}\in\prod_{n\in\N_0}\mathscr{F}(X_n);\,\forall\,n\in\N_0: f_n=r_{X_{n+1}}^{X_n}(f_{n+1})\}$$
	of $\prod_{n\in\N_0}\mathscr{F}(X_n)$ are algebraically isomorphic via the mapping
	$$\mathscr{F}(X)\rightarrow\mbox{proj}_{\leftarrow n}(\mathscr{F}(X_n),r_{X_{n+1}}^{X_n}),f\mapsto (r_X^{X_n}(f))_{n\in\N_0}=(f_{|X_n})_{n\in\N_0}.$$
	Indeed, injectivity follows from the localization property and surjectivity from the gluing property of a sheaf.
	
	For obvious reasons, $\mathscr{F}(X)$, $X\subseteq\Omega$ open, is what we call a \textit{space of functions defined by local properties}. Since we want to have some results from functional analysis at our disposal, we define the following property for a sheaf of functions $\mathscr{F}$ on $\Omega$:
	
	\begin{enumerate}
		\item[($\mathscr{F}1$)] The function space $\mathscr{F}(X)$, where $X\subseteq\Omega$ is open, is a webbed and ultrabornological Hausdorff locally convex space (which is satisfied, for example, if $\mathscr{F}(X)$ is a Fr\'echet space). Additionally, we assume that $\mathscr{F}(X)\subseteq C(X)$, the latter denoting the space of $\K$-valued continuous functions on $X$, and we assume that for each $x\in X$ the point evaluation $\delta_x$ in $x$ is a continuous linear functional on $\mathscr{F}(X)$.
		
		Whenever $X,Y\subseteq\Omega$ are open with $Y\subseteq X$ it therefore follows that the restriction map $r_X^Y$ has closed graph, hence is continuous by De Wilde's Closed Graph Theorem (see e.g.\ \cite[Theorem 24.31]{MeiseVogt}).
		
		Moreover, for every open, relatively compact exhaustion $(X_n)_{n\in\N_0}$ of $X$ we assume that the above mentioned algebraic isomorphism between $\mathscr{F}(X)$ and $\mbox{proj}_{\leftarrow n}(\mathscr{F}(X_n),r_{X_{n+1}}^{X_n})$ is even a topological isomorphism.
	\end{enumerate}
\end{rm}
\end{definition}

\begin{remark}\label{properties of local function sheaves}
\begin{rm}
	For a sheaf $\mathscr{F}$ on $\Omega$ with the property that $\mathscr{F}(X)$ is a locally convex space and $r_X^Y$ is continuous for each open $Y\subseteq X\subseteq\Omega$ (by definition, this means that $\mathscr{F}$ is a locally convex sheaf of functions on $\Omega$) it is immediate that for an arbitrary open, relatively compact exhaustion $(X_n)_{n\in\N_0}$ of $X$ the canonical isomorphism between $\mathscr{F}(X)$ and $\mbox{proj}_{\leftarrow n}(\mathscr{F}(X_n),r_{X_{n+1}}^{X_n})$ is continuous. Therefore, if $\mathscr{F}$ is a locally convex sheaf of continuous functions such that $\mathscr{F}(X)$ is a Fr\'echet space for each open $X\subseteq\Omega$ on which $\delta_x$ is a continuous linear functional for every $x\in X$, it follows from the Open Mapping Theorem and the fact that Fr\'echet spaces are ultrabornological (see e.g.\ \cite[Remark 24.15 c)]{MeiseVogt}) and webbed (see e.g.\ \cite[Corollary 24.29]{MeiseVogt}) that $(\mathscr{F}1)$ is satisfied. 
	
	Moreover, for a sheaf $\mathscr{F}$ on $\Omega$ satisfying $(\mathscr{F}1)$ it follows from $\delta_x\in\mathscr{F}(X)'$ for each $x\in X$ that the inclusion mapping
	$$\mathscr{F}(X)\hookrightarrow C(X),f\mapsto f$$
	has closed graph - where we equip $C(X)$ as usual with the compact-open topology. Since $\mathscr{F}(X)$ is supposed to be ultrabornological it follows from De Wilde's Closed Graph Theorem that this inclusion is continuous, i.e.\ the topology carried by $\mathscr{F}(X)$ is finer than the compact-open topology.
\end{rm}
\end{remark}

\begin{example}\label{examples of sheaves}
\hspace{2em}
\begin{rm}
	\begin{itemize}
		\item[i)] For every $\sigma$-compact, locally compact, non-compact Hausdorff space $\Omega$ the sheaf $C$ of continuous functions satisfies $(\mathscr{F}1)$, provided that $C(X)$ is as usual equipped with the compact-open topology, that is, the locally convex topology defined by the family of seminorms $\{\|\cdot\|_K;\,K\subseteq X\mbox{ compact}\}$, where for a compact subset $K\subseteq X$
		$$\forall\,f\in C(X):\,\|f\|_K:=\sup_{x\in K}|f(x)|.$$
		
		\item[ii)] For $\Omega=\R^d$ and $r\in\N_0\cup\{\infty\}$ we denote by $C^r$ the sheaf of $r$-times continuously differentiable $\K$-valued functions. As usual, we equip $C^r(X)$ with the topology of local uniform convergence of all partial derivatives up to order $r$, i.e.\ the locally convex topology defined by the family of seminorms $\{\|\cdot\|_{l,K};\,l\in\N_0, l<r+1, K\subseteq X\mbox{ compact}\}$, where for $l<r+1$ and $K\subseteq X$ compact
		$$\forall\,f\in C^r(X):\|f\|_{l,K}:=\sup_{|\alpha|\leq l}\sup_{x\in K}|\partial^\alpha f(x)|,$$
		with $|\alpha|=\alpha_1+\ldots+\alpha_d$ denoting the length of a multi-index $\alpha\in\N_0^d$. In this way, we obtain a separable Fr\'echet space and the sheaf $C^r$ on $\R^d$ is easily seen to satisfy $(\mathscr{F}1)$.
		
		\item[iii)] For $\Omega=\C^d$ let $\mathscr{H}$ be the sheaf of holomorphic functions, i.e.\ for $X\subseteq\C^d$ let $\mathscr{H}(X)$ denote the space of holomorphic functions on $X$ endowed with the compact-open topology. Then $\mathscr{H}(X)$ is a separable nuclear Fr\'echet space and it follows easily that $(\mathscr{F}1)$ is satisfied.
		
		\item[iv)] For $\Omega=\R^d$ and a complex coefficient polynomial $P$ in $d$ variables, i.e.\ $P\in\C[X_1,\ldots,X_d]$, we define for an open $X\subseteq\R^d$
		$$C^\infty_P(X):=\{f\in C^\infty(X);\, P(\partial)f=0 \mbox{ in }X\},$$
		where as usual for $P(\xi)=\sum_{|\alpha|\leq m}a_\alpha\xi^\alpha$ we set
		$$\forall\,f\in C^\infty(X), x\in X:\,P(\partial)f(x)=\sum_{|\alpha|\leq m}a_\alpha\partial^\alpha f(x).$$
		Obviously, $C_P^\infty(X)$ is a subspace of $C^\infty(X)$ and since $P(\partial)$ is a continuous linear operator on the separable nuclear Fr\'echet space $C^\infty(X)$ it follows that $C_P^\infty(X)$ is a separable nuclear Fr\'echet space when equipped with the relative topology of $C^\infty(X)$. It is easily seen that $C_P^\infty$ is a sheaf on $\R^d$ which satisfies $(\mathscr{F}1)$. 
		
		Considering the special cases of $P(\partial)$ being the Cauchy-Riemann operator or the Laplace operator, we obtain $C_P^\infty$ is the sheaf of holomorphic functions $\mathscr{H}$ on open subsets of $\C$ or the sheaf of harmonic functions on open subsets of $\R^d$, respectively. Since these differential operators are elliptic it follows that the compact-open topology and the subspace topology inherited from $C^\infty(X)$ coincide on $C_P^\infty(X)$ so that iii) above in case of $d=1$ is indeed a special case of this example of sheaf, see section \ref{kernels of differential operators} below. 
	\end{itemize}
\end{rm}
\end{example}

\textbf{General assumption.}
Let $\mathscr{F}$ be a sheaf on $\Omega$ satisfying $(\mathscr{F}1)$, $X\subseteq \Omega$ open, and let $w:X\rightarrow\K$ as well as $\psi:X\rightarrow X$ be continuous. We assume that the weighted composition operator
$$C_{w,\psi}:\mathscr{F}(X)\rightarrow\mathscr{F}(X), f\mapsto w\cdot(f\circ\psi)$$
is well-defined and we call $w$ the \textit{weight} and $\psi$ the \textit{symbol} of $C_{w,\psi}$. In case of $w=1$ we simply denote the (unweighted) composition operator by $C_{\psi}$ instead of $C_{1, \psi}.$

For every $x\in X$ we have by hypothesis that $\delta_x\in\mathscr{F}(X)'$ and it follows easily from the Hahn-Banach Theorem, that the linear span of $\{\delta_x;\,x\in X\}$ is weak*-dense in $\mathscr{F}(X)'$. Since $\mathscr{F}(X)$ is Hausdorff this yields that $C_{w,\psi}$ has closed graph. By $(\mathscr{F}1)$ we conclude from De Wilde's Closed Graph Theorem \cite[Theorem 24.31]{MeiseVogt} that $C_{w,\psi}$ is continuous. For brevity, we write for a general $f\in C(X)$ also $C_{w,\psi}(f)$ instead of $w\cdot(f\circ\psi)$.

\section{Power boundedness and related properties for weighted composition operators}\label{power boundedness}

In this section we give some results concerning power boundedness and related properties for a weighted composition operator defined on a space of functions defined by local properties.

\begin{definition}
\begin{rm}
Let $E$ be a locally convex space and $T$ a continuous linear operator on $E$. We denote the set of all continuous seminorms on $E$ by $cs(E)$.
\begin{itemize}
	\item[i)] $T$ is called \textit{topologizable} if for every $p\in cs(E)$ there is $q\in cs(E)$ such that for every $m\in\N$ there is $\gamma_m>0$ with
	$$p(T^m(x))\leq \gamma_m q(x)\mbox{ for all }x\in E.$$
	\item[ii)] $T$ is called \textit{$m$-topologizable} if for every $p\in cs(E)$ there is $q\in cs(E)$ and $\gamma>0$ such that for every $m\in\N$
	$$p(T^m(x))\leq \gamma^m q(x)\mbox{ for all }x\in E.$$
	\item[iii)] $T$ is called \textit{power bounded} if the set of iterates of $T$, i.e.\  $\{T^m;\,m\in\N\}$, is equicontinuous. Thus $T$ is power bounded precisely when for every $p\in cs(E)$ there is $q\in cs(E)$ such that for every $m\in\N$
	$$p(T^m(x))\leq q(x)\mbox{ for all }x\in E.$$
	\item[iv)] We denote by
	$$T_{[n]}:=\frac{1}{n}\sum_{m=1}^n T^m,\,n\in\N,$$
	the \textit{Ces\`aro means} of $T$ and $T$ is called \textit{mean ergodic} if $(T_{[n]})_{n\in\N}$ converges in the strong operator topology, i.e.\ if for each $x\in E$ the sequence $(T_{[n]}(x))_{n\in\N}$ converges in $E$. The limit will be denoted by $P_T (x)$. In case $(T_{[n]})_{n\in\N}$ converges uniformly on bounded subsets of $E$ we call $T$ \textit{uniformly mean ergodic}.
\end{itemize}
\end{rm}
\end{definition}

\begin{remark}
	\hspace{2em}
	\begin{enumerate}
		\item[i)] Topologizable operators have been introduced by \.{Z}elazko in \cite{Zelazko07} (see also \cite{Bonet07}). It is known that if for a Hausdorff locally convex space $E$ the algebra $L(E)$ of all continuous endomorphisms of $E$ is topologizable, i.e.\ $L(E)$ admits a locally convex topology for which multiplication is jointly continuous, $E$ is necessarily subnormed, i.e.\ there is a norm on $E$ such that the corresponding topology is finer than the locally convex topology initially given on $E$, see \cite{Zelazko02} and references therein. In case of a sequentially complete $E$ it has been shown in \cite{Zelazko02} that this necessary condition on $E$ is also sufficient for the topologizability of $L(E)$. Motivated by this, in \cite{Zelazko07} it was investigated when for a given continuous linear operator $T$ on a locally convex Hausdorff space $E$ there is a unital subalgebra $A$ of $L(E)$ which contains $T$ and which admits a locally convex topology making $A$ into a topological algebra such that additionally the map
		$$A\times E\rightarrow E, (S,x)\mapsto Sx$$
		is continuous. By \cite[Theorem 5]{Zelazko07} for a given $T\in L(E)$ there is such a subalgebra $A$ of $L(E)$ if and only if $T$ is topologizable.
		
		\item[ii)] Clearly, every $m$-topologizable operator is topologizable and $T$ is topologizable whenever there is a sequence $(\alpha_m)_{m\in\N}$ of strictly positive numbers such that the set $\{\alpha_m T^m;\,m\in\N\}$ is equicontinuous and then the sequences $(\gamma_m)_{m\in\N}$ in the definition of topologizability can be chosen independently of the involved seminorms $p$ and $q$. Obviously, every power bounded operator is $m$-topologizable.
		
		\item[iii)] Since a set of operators on a barreled locally convex space is equicontinuous if and only if it is bounded in the strong operator topology, an operator $T$ on a barreled space $E$ is topologizable if there is a sequence $(\alpha_m)_{m\in\N}$ of strictly positive numbers such that $\{\alpha_m T^m(x);\,m\in\N\}$ is a bounded subset of $E$ for each $x\in E$. From the equality
		$$\frac{1}{n} T^n=T_{[n]}-\frac{n-1}{n}T_{[n-1]}$$
		it thus follows in particular that every mean ergodic operator on a barreled locally convex space is topologizable.
		
		\noindent If the sheaf $\mathscr{F}$ on $\Omega$ satisfies $(\mathscr{F}1)$ it follows that $\mathscr{F}(X)$ is ultrabornological, hence barreled, for every open $X\subseteq\Omega$.
		
		\item[iv)] $m$-topologizable operators have also been considered in \cite{Zelazko07}. However, our interest in $m$-topologizable operators is motivated by a recent result due to Goli\'nska and Wegner by which an $m$-topologizable operator $T$ on a sequentially complete locally convex space $E$ generates a uniformly continuous semigroup of operators on $E$. In fact, an even more general condition on $T$ suffices to be the generator of a uniformly continuous semigroup, see \cite[Theorem 1]{GolinskaWegner}. It should be noted that contrary to the Banach space setting not every continuous linear operator on a (sequentially complete) locally convex space generates a strongly continuous semigroup, see \cite{FrerickJordaKalmesWengenroth}.
		
		\item[v)] It is well known that a power bounded operator $T$ on a locally convex space $E$ is mean ergodic precisely when
		$$E=\mbox{ker}(I-T)\oplus\overline{\mbox{im}(I-T)},$$
		where $I$ denotes the identity on $E$, $\mbox{im}(I-T)$ the range of $I-T$, and $\oplus$ denotes an algebraic direct sum; this can easily be deduced from \cite[Chapter VIII, Sect.\ 3, Theorem 1]{Yosida}. For a power bounded mean ergodic operator $T$ the mapping $P_T$ then is the projection onto $\mbox{ker}(I-T)$ along $\overline{\mbox{im}(I-T)}$. While power bounded operators are not mean ergodic in general (see e.g.\ \cite[Example 2.3]{Yahdi}), on DF-spaces and LF-spaces which are also Montel spaces every power bounded operator is uniformly mean ergodic by \cite[Proposition 2.8]{AlbaneseBonetRicker09}.
	\end{enumerate}
\end{remark}

Before we state our first result which gives a necessary condition on the symbol $\psi$ for a weighted composition operator $C_{w,\psi}$ to be topologizable we recall:

\begin{definition}
	A continuous mapping $\psi:X\rightarrow X$ on a topological space $X$ is said to have \textit{stable orbits} if
	$$\forall\,K\subseteq X\mbox{ compact}\,\exists\,L\subseteq X\mbox{ compact}\,\forall\,m\in\N:\,\psi^m(K)\subseteq L,$$
	where $\psi^m$ denotes the $m$-fold iterate of $\psi$.
\end{definition}

\begin{proposition}\label{stable necessary}
	Let $\mathscr{F}$ satisfy $(\mathscr{F}1)$ and let $X\subseteq\Omega$ be open. Assume that additionally the following conditions hold.
	\begin{enumerate}
		\item[a)] There is an open, relatively compact exhaustion $(X_n)_{n\in\N}$ of $X$ such that for each $n\in\N$ and every $x\in X\backslash\overline{X}_n$ as well as every open $W\subseteq X\backslash\overline{X}_n$ containing $x$ there is $U\subseteq W$ open, $x\in U$ for which the restriction $r_X^{X_n\cup U}$ has dense range.
		\item[b)] $\mbox{ker }\delta_x\neq \mathscr{F}(X)$ for each $x\in X$.
		\item[c)] For every $m\in\N_0$ the set
		$$\{y\in X;\,w(\psi^m(y))\neq 0\}$$
		is dense in $X$.
	\end{enumerate}
	If $C_{w,\psi}$ is topologizable, then $\psi$ has stable orbits.
\end{proposition}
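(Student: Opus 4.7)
The plan is to prove the contrapositive: if $\psi$ does not have stable orbits, then $C_{w,\psi}$ is not topologizable. Suppose a compact $K_0\subseteq X$ satisfies that $\bigcup_{m\in\N}\psi^m(K_0)$ fails to be relatively compact. Using local compactness of $X$, cover $K_0$ by finitely many relatively compact open sets, let $V$ be their union and set $K:=\overline{V}$; then $K$ is compact, contains $K_0$ in its open interior $V$, and still has an unbounded forward orbit. By Remark~\ref{properties of local function sheaves} the topology of $\mathscr{F}(X)$ is finer than the compact-open topology, so $p_K(f):=\sup_{x\in K}|f(x)|$ is a continuous seminorm on $\mathscr{F}(X)$. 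Assuming $C_{w,\psi}$ topologizable, pick $q\in cs(\mathscr{F}(X))$ and $\gamma_m>0$ with $p_K(C_{w,\psi}^m(f))\leq\gamma_m\,q(f)$ for every $f\in\mathscr{F}(X)$ and every $m\in\N$. The topological projective-limit representation $\mathscr{F}(X)\cong\mbox{proj}_{\leftarrow n}(\mathscr{F}(X_n),r_{X_{n+1}}^{X_n})$ provided by $(\mathscr{F}1)$ with the exhaustion from (a) then yields $n_0\in\N$ and $\tilde q\in cs(\mathscr{F}(X_{n_0}))$ such that $q(f)\leq\tilde q(f_{|X_{n_0}})$ for every $f\in\mathscr{F}(X)$.

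Next I would select a witness $y'\in V$ with two compatible properties. From failure of stable orbits for $K$ there exist $m_0\in\N$ and $y_0\in V$ with $\psi^{m_0}(y_0)\in X\setminus\overline{X}_{n_0}$, hence $V':=V\cap(\psi^{m_0})^{-1}(X\setminus\overline{X}_{n_0})$ is a non-empty open subset of $X$. By (c), the continuity of each $w\circ\psi^j$, and the fact that a finite intersection of open dense sets is dense, the set $A:=\bigcap_{j=0}^{m_0-1}\{y\in X:w(\psi^j(y))\neq 0\}$ is open and dense in $X$, so $V'\cap A\neq\emptyset$. Fix $y'\in V'\cap A$ and set $z:=\psi^{m_0}(y')\in X\setminus\overline{X}_{n_0}$; by construction $w(y')w(\psi(y'))\cdots w(\psi^{m_0-1}(y'))\neq 0$.

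Finally I would build an approximating sequence $(f_k)\subseteq\mathscr{F}(X)$ which vanishes asymptotically on $X_{n_0}$ but stays away from zero at $z$. Choose an open $W\subseteq X\setminus\overline{X}_{n_0}$ with $z\in W$; by (a) there is an open $U\subseteq W$ containing $z$ such that $r_X^{X_{n_0}\cup U}$ has dense range in $\mathscr{F}(X_{n_0}\cup U)$. Using (b), take $g\in\mathscr{F}(X)$ with $g(z)\neq 0$; since $U\cap X_{n_0}=\emptyset$, the gluing axiom furnishes $h\in\mathscr{F}(X_{n_0}\cup U)$ with $h_{|X_{n_0}}=0$ and $h_{|U}=g_{|U}$, so in particular $h(z)\neq 0$. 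Choose $f_k\in\mathscr{F}(X)$ with $r_X^{X_{n_0}\cup U}(f_k)\to h$ in $\mathscr{F}(X_{n_0}\cup U)$; continuity of $\delta_z$ and of restriction to $X_{n_0}$ give $f_k(z)\to h(z)\neq 0$ and $f_{k|X_{n_0}}\to 0$ in $\mathscr{F}(X_{n_0})$. Combining
\begin{equation*}
|C_{w,\psi}^{m_0}(f_k)(y')|\leq p_K(C_{w,\psi}^{m_0}(f_k))\leq\gamma_{m_0}\tilde q(f_{k|X_{n_0}})\longrightarrow 0
\end{equation*}
with the direct computation $C_{w,\psi}^{m_0}(f_k)(y')=w(y')\cdots w(\psi^{m_0-1}(y'))\,f_k(z)\to w(y')\cdots w(\psi^{m_0-1}(y'))\,g(z)\neq 0$ yields the desired contradiction. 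The main obstacle I expect is the second step: the witness $y'$ must lie simultaneously in the interior of $K$ and in the dense set $A$, which forces the preliminary thickening of $K_0$ so that $V'$ is a non-empty open subset of $X$ on which density of $A$ can be exploited.
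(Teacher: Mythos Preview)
Your proof is correct and uses essentially the same mechanism as the paper: factor the controlling seminorm through some $\mathscr{F}(X_{n_0})$ via the projective limit representation from $(\mathscr{F}1)$, then use gluing together with the dense-range hypothesis (a) to produce functions that are small on $X_{n_0}$ but nonzero at an escaping point $z=\psi^{m_0}(y')$, contradicting the topologizability estimate. The paper argues directly rather than by contrapositive, taking the open sets $X_{n-1}$ of the exhaustion as the compact witnesses (which makes your thickening step unnecessary since density from (c) applies immediately on open sets) and uses a scaling parameter $M\to\infty$ in place of your approximating sequence $(f_k)$, but these are only packaging differences.
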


\begin{proof}
	Let $(X_n)_{n\in\N}$ be an open, relatively compact exhaustion of $X$ as in a) and set $X_0:=\emptyset$. Since $C_{w,\psi}$ is topologizable and because $\mathscr{F}$ satisfies $(\mathscr{F}1)$ it follows that for every $n\in\N$ and every zero neighborhood $U_n$ in $\mathscr{F}(X_n)$ there is $l\in\N$ and a zero neighborhood $U_l$ in $\mathscr{F}(U_l)$ (see \cite[Chapter 3.3]{Wengenroth}) such that for every $m\in\N$ there is $\gamma_m>0$ such that
	$$C_{w,\psi}^m\big((r_X^{X_l})^{-1}(U_l)\big)\subseteq\gamma_m(r_X^{X_n})^{-1}(U_n),$$
	which by taking polars implies
	$$\frac{1}{\gamma_m}\Big((r_X^{X_n})^{-1}(U_n))\Big)^\circ\subseteq \big((C_{w,\psi}^t)^m\big)^{-1}\Big(\big((r_X^{X_l})^{-1}(U_l)\big)^\circ\Big),$$
	i.e.\
	\begin{equation}\label{transpose inclusion}
		(C_{w,\psi}^t)^m\Big(\big((r_X^{X_n})^{-1}(U_n)\big)^\circ\Big)\subseteq\gamma_m\Big((r_X^{X_l})^{-1}(U_l)\Big)^\circ,
	\end{equation}
	where $C_{w,\psi}^t$ denotes the transpose of $C_{w,\psi}$.
	
	Because $\mathscr{F}$ satisfies $(\mathscr{F}1)$ the inclusion
	$$i_n:\mathscr{F}(X_n)\hookrightarrow C(X_n)$$
	is continuous so that with $\|f\|_{\overline{X}_{n-1}}:=\sup_{x\in\overline{X}_{n-1}}|f(x)|$ for $f\in C(X_n)$ the set
	$$U_n:=i_n^{-1}\big(\{f\in C(X_n);\,\|f\|_{\overline{X}_{n-1}}\leq 1\}\big)$$
	is a zero neighborhood in $\mathscr{F}(X_n)$ and
	$$\forall\,x\in X_{n-1}:\,\delta_x\in\Big((r_X^{X_n})^{-1}(U_n)\Big)^\circ.$$
	
	Let $l\in\N$, $U_l\in\mathscr{F}(X_l)$, and $(\gamma_m)_{m\in\N}$ be as above for this particular choice of $U_n$. Fix $m\in\N$. Since by hypothesis c)
	$$\bigcap_{j=0}^m\{y\in X;\,w(\psi^j(y))\neq 0\}$$
	is dense in $X$ there is $x_0\in X_{n-1}$ with $\prod_{j=0}^{m-1}|w(\psi^j(x_0))|\neq 0$. Assuming that $\psi^m(x_0)\notin \overline{X}_l$ we will derive a contradiction.
	
	Let $W\subseteq X\backslash\overline{X}_l$ be an open neighborhood of $\psi^m(x_0)$. By hypothesis b), there is in particular $g\in \mathscr{F}(W)$ such that $g(\psi^m(x_0))=1$. Moreover, applying hypothesis a), there is an open subset $U$ of $W$ containing $\psi^m(x_0)$ such that $r_X^{X_l\cup U}$ has dense range and such that $|g(y)|>1/2$ for every $y\in U$.
	
	By the properties of a sheaf, for every $M>0$ there is $h_M\in\mathscr{F}(X_l\cup U)$ such that
	$$r_{X_l\cup U}^{X_l}(h_M)=0\mbox{ and }r_{X_l\cup U}^{U}(h_M)=M g.$$
	Denoting the open ball around $M$ with radius $M/2$ in $\K$ by $B(M,M/2)$ it follows that
	$$\delta_{\psi^m(x_0)}^{-1}\big(B(M,\frac{M}{2})\big)\cap (r_{X_l\cup U}^{X_l})^{-1}(U_l)$$
	is a neighborhood of $h_M$ in $\mathscr{F}(X_l\cup U)$. It follows from the dense range of $r_X^{X_l\cup U}$ that there is $f_M\in\mathscr{F}(X)$ with
	$$r_X^{X_l\cup U}(f_M)-h_M\in \delta_{\psi^m(x_0)}^{-1}\big(B(M,\frac{M}{2})\big)\cap (r_{X_l\cup U}^{X_l})^{-1}(U_l).$$
	Hence,
	$$U_l\ni r_{X_l\cup U}^{X_l}\big(r_X^{X_l\cup U}(f_M)-h_M\big)=r_{X_l\cup U}^{X_l}\big(r_X^{X_l\cup U}(f_M)-h_M\big)+r_{X_l\cup U}^{X_l}(h_M)=r_X^{X_l}(f_M)$$
	so that $f_M\in (r_X^{X_l})^{-1}(U_l)$. Because $\delta_{x_0}\in\Big((r_X^{X_n})^{-1}(U_n)\Big)^\circ$ it follows from (\ref{transpose inclusion}) that for every $m\in\N$ and $M>0$
	\begin{align*}
		\gamma_m &\geq |\langle\delta_{x_0},C_{w,\psi}^m(f_M)\rangle|=\prod_{j=0}^{m-1}|w(\psi^j(x_0)) f_M(\psi^m(x_0))|\\
		&=\prod_{j=0}^{m-1}|w(\psi^j(x_0))||\delta_{\psi^m(x_0)}(f_M)|\geq \frac{M}{2}\prod_{j=0}^{m-1}|w(\psi^j(x_0))|
	\end{align*}
	which by $\prod_{j=0}^{m-1}|w(\psi^j(x_0))|\neq 0$ gives the desired contradiction.
	
	Thus,
	$$\psi^m\big(\{x\in X_{n-1};\,\prod_{j=0}^{m-1}|w(\psi^j(x))|\neq 0\}\big)\subseteq\overline{X}_l$$
	for every $m\in\N$. Since $X_{n-1}$ is open in $X$, $\psi^m$ is continuous, and $\{y\in X;\,\forall\,0\leq l\leq m:\,w(\psi^l(y))\neq 0\}$ is dense in $X$ by hypothesis c), we conclude
	$$\forall\,m\in\N:\,\psi^m(\overline{X}_{n-1})\subseteq\overline{X}_l.$$
	Because $(X_n)_{n\in\N}$ is an open, relatively compact exhaustion of $X$ it follows that $\psi$ has stable orbits.
\end{proof}

\begin{proposition}\label{stable sufficient}
	Assume that $\mathscr{F}$ satisfies $(\mathscr{F}1)$ and that the topology defining the sheaf $\mathscr{F}$ is the compact-open topology. Moreover, let $X\subseteq\Omega$ be open. If $\psi$ has stable orbits then $C_{w,\psi}$ is $m$-topologizable on $\mathscr{F}(X)$.
\end{proposition}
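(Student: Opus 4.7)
Since $\mathscr{F}(X)$ carries the compact-open topology, a fundamental system of continuous seminorms is $\{\|\cdot\|_K\,;\,K\subseteq X\text{ compact}\}$, so it suffices to exhibit, for each compact $K\subseteq X$, some compact $L\subseteq X$ and a constant $\gamma\geq 1$ with
$$\|C_{w,\psi}^m f\|_K\leq \gamma^m\|f\|_L\quad\text{for every }f\in\mathscr{F}(X)\text{ and every }m\in\N.$$

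A direct induction (or just unwinding the definition) gives the pointwise formula
$$C_{w,\psi}^m f(x)=\Big(\prod_{j=0}^{m-1}w(\psi^j(x))\Big)\,f(\psi^m(x))\qquad(x\in X),$$
so the whole argument reduces to controlling the product of weights and the argument $\psi^m(x)$ uniformly in $m$.

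This is exactly where the stable orbit hypothesis enters. Given a compact $K\subseteq X$, choose a compact $L_0\subseteq X$ with $\psi^m(K)\subseteq L_0$ for all $m\in\N$, and set $L:=K\cup L_0$, which is compact and satisfies $\psi^j(x)\in L$ for every $x\in K$ and every $j\in\N_0$. By continuity of $w$ the number $C:=\sup_{y\in L}|w(y)|$ is finite, and setting $\gamma:=\max(1,C)$ the formula above yields
$$|C_{w,\psi}^m f(x)|\leq\Big(\prod_{j=0}^{m-1}|w(\psi^j(x))|\Big)|f(\psi^m(x))|\leq C^m\,\|f\|_L\leq \gamma^m\,\|f\|_L$$
for every $x\in K$, so $\|C_{w,\psi}^m f\|_K\leq\gamma^m\|f\|_L$, proving $m$-topologizability.

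The only potential subtlety is a bookkeeping one, namely ensuring the same compact set $L$ works simultaneously for all iterates of $\psi$ and for the argument $\psi^m(x)$ appearing in $f$; this is precisely what the enlargement $L=K\cup L_0$ takes care of, so I expect no real obstacle beyond organizing these estimates cleanly.
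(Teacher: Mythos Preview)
Your proof is correct and follows essentially the same route as the paper: both derive the pointwise formula for $C_{w,\psi}^m f$ and bound it by $\|w\|_L^m\,\|f\|_L$ for a compact $L$ absorbing all iterates. Your version is in fact slightly more careful, since you explicitly enlarge to $L=K\cup L_0$ so that $\psi^0(x)=x$ is guaranteed to lie in $L$; the paper tacitly assumes $K\subseteq L$ when bounding the $j=0$ factor in the product.
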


\begin{proof}
	Fix a compact $K\subseteq X$ and let $L\subseteq X$ be compact such that $\psi^m(K)\subseteq L$ for every $m\in\N$. Then, for every $f\in\mathscr{F}(X)$ and each $m\in\N$ it follows
	$$\|C_{w,\psi}^m(f)\|_K=\sup_{x\in K}|\prod_{j=0}^{m-1}w(\psi^j(x)) f(\psi^m(x))|\leq\|w\|_L^m\|f\|_L$$
	which shows the $m$-topologizability of $C_{w,\psi}$.
\end{proof}

Combining the preceding two propositions we see that $m$-topologizability and topologizability for weighted composition operators on sheaves $\mathscr{F}$ which are endowed with the compact-open topology are equivalent under some additional assumptions on the space $\mathscr{F}(X)$ and some mild additional conditions on the weight and the symbol. While in concrete situations condition b) in Proposition \ref{stable necessary} is probably easy to check, Proposition \ref{denseness result} below gives sufficient conditions on the weight and the symbol under which condition c) follows.

\begin{corollary}\label{characterizing topologizability for compact-open}
	Let $\mathscr{F}$ satisfy $(\mathscr{F}1)$ and assume that the topology defining the sheaf $\mathscr{F}$ is the compact-open topology. Moreover, let $X\subseteq\Omega$ be open and assume that additionally the following conditions hold.
	\begin{enumerate}
		\item[a)] There is an open, relatively compact exhaustion $(X_n)_{n\in\N}$ of $X$ such that for each $n\in\N$ and every $x\in X\backslash\overline{X}_n$ as well as every open $W\subseteq X\backslash\overline{X}_n$ containing $x$ there is $U\subseteq W$ open, $x\in U$ for which the restriction $r_X^{X_n\cup U}$ has dense range.
		\item[b)] $\mbox{ker }\delta_x\neq \mathscr{F}(X)$ for each $x\in X$.
		\item[c)] For every $m\in\N_0$ the set
		$$\{y\in X;\,w(\psi^m(y))\neq 0\}$$
		is dense in $X$.
	\end{enumerate}
	Then, the following are equivalent.
	\begin{enumerate}
		\item[i)] $C_{w,\psi}$ is $m$-topologizable on $\mathscr{F}(X)$.
		\item[ii)] $C_{w,\psi}$ is topologizable on $\mathscr{F}(X)$.
		\item[iii)] $\psi$ has stable orbits.
	\end{enumerate}
\end{corollary}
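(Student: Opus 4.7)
The plan is to package the corollary as a straightforward chain of three implications, two of which are already established and one of which is trivial from the definitions.

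First, I would dispatch (i) $\Rightarrow$ (ii) as a tautology: if $C_{w,\psi}$ is $m$-topologizable, then for each $p\in cs(\mathscr{F}(X))$ there exist $q\in cs(\mathscr{F}(X))$ and $\gamma>0$ with $p(C_{w,\psi}^m(f))\leq\gamma^m q(f)$ for every $m\in\N$ and every $f\in\mathscr{F}(X)$, and setting $\gamma_m:=\gamma^m$ yields the estimate required in the definition of topologizability. This was already observed in Remark~ii).

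Next, for (ii) $\Rightarrow$ (iii), I would simply invoke Proposition \ref{stable necessary}. The hypotheses a), b), c) listed in the corollary coincide verbatim with those of that proposition, and $(\mathscr{F}1)$ is standing, so the proposition applies and forces $\psi$ to have stable orbits. Note that Proposition \ref{stable necessary} does not require the topology of $\mathscr{F}$ to be the compact-open topology, so the stronger hypothesis present in the corollary is not even used here.

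Finally, for (iii) $\Rightarrow$ (i), I would apply Proposition \ref{stable sufficient}, whose hypotheses — that $\mathscr{F}$ satisfy $(\mathscr{F}1)$ and carry the compact-open topology — are both assumed in the corollary; this yields $m$-topologizability of $C_{w,\psi}$ on $\mathscr{F}(X)$ directly from stable orbits of $\psi$, and no further hypotheses on $w$ or $X$ are needed for this direction.

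There is essentially no obstacle: the corollary is a consolidation statement whose entire content is to observe that, under the compact-open-topology assumption together with conditions a)--c), the three notions coincide, and the only thing to check is that the hypotheses of the two propositions are indeed available. The only (very minor) subtlety worth flagging in the write-up is that condition c) is needed only for Proposition \ref{stable necessary} (to ensure the orbits along which one tests $C_{w,\psi}^m$ through the weight $\prod_{j=0}^{m-1}w(\psi^j(\cdot))$ are non-degenerate), whereas Proposition \ref{stable sufficient} requires neither a), b), nor c).
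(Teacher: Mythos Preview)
Your proposal is correct and matches the paper's approach exactly: the corollary is presented there without a formal proof, merely as the combination of Propositions~\ref{stable necessary} and~\ref{stable sufficient}, with (i) $\Rightarrow$ (ii) being trivial. Your additional remarks about which hypotheses are used in which direction are accurate.
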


As we shall see now, Propositions \ref{stable necessary} and \ref{stable sufficient} also enable to characterize power boundedness of weighted composition operators in terms of the weight and the symbol in case the topology of the sheaf $\mathscr{F}$ is the compact-open topology.

\begin{theorem}\label{power boundedness for compact-open}
	Assume that $\mathscr{F}$ satisfies $(\mathscr{F}1)$ and that the topology defining the sheaf $\mathscr{F}$ is the compact-open topology. Let $X\subseteq\Omega$ be open and assume that $w\in\mathscr{F}(X)$ and that additionally the following conditions hold.
	\begin{enumerate}
		\item[a)] There is an open, relatively compact exhaustion $(X_n)_{n\in\N}$ of $X$ such that for each $n\in\N$ and every $x\in X\backslash\overline{X}_n$ as well as every open $W\subseteq X\backslash\overline{X}_n$ containing $x$ there is $U\subseteq W$ open, $x\in U$ for which the restriction $r_X^{X_n\cup U}$ has dense range.
		\item[b)] $\mbox{ker }\delta_x\neq \mathscr{F}(X)$ for each $x\in X$.
		\item[c)] For every $m\in\N_0$ the set
		$$\{y\in X;\,w(\psi^m(y))\neq 0\}$$
		is dense in $X$.
	\end{enumerate}
	Then, the following are equivalent.
	\begin{enumerate}
		\item[i)] $C_{w,\psi}$ is power bounded on $\mathscr{F}(X)$.
		\item[ii)] $C_{w,\psi}$ is $m$-topologizable on $\mathscr{F}(X)$ and $\{C_{w,\psi}^m(w);\,m\in\N\}$ is bounded in $\mathscr{F}(X)$.
		\item[iii)] $C_{w,\psi}$ is topologizable on $\mathscr{F}(X)$ and $\{C_{w,\psi}^m(w);\,m\in\N\}$ is bounded in $\mathscr{F}(X)$.
		\item[iv)] $\psi$ has stable orbits and $\{C_{w,\psi}^m(w);\,m\in\N\}$ is bounded in $\mathscr{F}(X)$.
	\end{enumerate}
\end{theorem}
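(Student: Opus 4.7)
The plan is to prove the cyclic chain \emph{(i)}$\Rightarrow$\emph{(ii)}$\Rightarrow$\emph{(iii)}$\Rightarrow$\emph{(iv)}$\Rightarrow$\emph{(i)}, with only the last implication carrying any real substance; the hypotheses (a), (b), (c) enter solely through Proposition~\ref{stable necessary}.

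For \emph{(i)}$\Rightarrow$\emph{(ii)}, the equicontinuity of $\{C_{w,\psi}^m:m\in\N\}$ is in particular $m$-topologizability with $\gamma=1$, and it also forces every orbit to be bounded; applied to $w\in\mathscr F(X)$, available by the theorem's standing hypothesis, this yields boundedness of $\{C_{w,\psi}^m(w):m\in\N\}$. The implication \emph{(ii)}$\Rightarrow$\emph{(iii)} is immediate from the definitions, and \emph{(iii)}$\Rightarrow$\emph{(iv)} is Proposition~\ref{stable necessary} applied verbatim, the boundedness condition being carried through unchanged.

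The main step \emph{(iv)}$\Rightarrow$\emph{(i)} rests on the iteration formula
$$C_{w,\psi}^m(f)(x) = \Bigl(\prod_{j=0}^{m-1}w(\psi^j(x))\Bigr)\,f(\psi^m(x)) \qquad (m\in\N,\ x\in X),$$
proved by a routine induction, together with the key observation that the weight product equals $C_{w,\psi}^{m-1}(w)(x)$, which makes sense as an element of $\mathscr F(X)$ thanks to $w\in\mathscr F(X)$. Given an arbitrary compact $K\subseteq X$, stable orbits of $\psi$ furnish a compact $L\subseteq X$ with $\psi^m(K)\subseteq L$ for every $m\in\N$; then for all $m\in\N$ and $f\in\mathscr F(X)$
$$\|C_{w,\psi}^m(f)\|_K \leq \|C_{w,\psi}^{m-1}(w)\|_K\,\|f\|_L.$$
Because $\mathscr F(X)$ is endowed with the compact-open topology, the assumed boundedness of $\{C_{w,\psi}^m(w):m\in\N\}$ translates into $M_K:=\sup_{m\in\N_0}\|C_{w,\psi}^m(w)\|_K<\infty$, so $\|C_{w,\psi}^m(f)\|_K\leq M_K\|f\|_L$ uniformly in $m\in\N$. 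Since the seminorms $\|\cdot\|_K$ with $K\subseteq X$ compact form a fundamental system for the topology of $\mathscr F(X)$, this is exactly equicontinuity of $\{C_{w,\psi}^m:m\in\N\}$, i.e.\ power boundedness.

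No significant obstacle is anticipated. The only care-points are the index bookkeeping in identifying $\prod_{j=0}^{m-1}w\circ\psi^j$ with $C_{w,\psi}^{m-1}(w)$ rather than with $C_{w,\psi}^m(w)$, and the essential use of the compact-open topology in the final implication, which is precisely what converts abstract boundedness in $\mathscr F(X)$ into uniform numerical bounds on compact subsets.
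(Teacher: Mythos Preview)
Your proposal is correct and follows essentially the same route as the paper: the chain \emph{(i)}$\Rightarrow$\emph{(ii)}$\Rightarrow$\emph{(iii)} is trivial, \emph{(iii)}$\Rightarrow$\emph{(iv)} is Proposition~\ref{stable necessary}, and for \emph{(iv)}$\Rightarrow$\emph{(i)} the paper uses exactly your estimate $\|C_{w,\psi}^m(f)\|_K\leq\|C_{w,\psi}^{m-1}(w)\|_K\,\|f\|_L$ with the same identification of the weight product as $C_{w,\psi}^{m-1}(w)$. Your care-point about the index shift is precisely the one that matters, and you handle it correctly.
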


\begin{proof}
	Since $w\in\mathscr{F}(X)$, i) trivially implies ii), and obviously ii) implies iii). If iii) holds, iv) follows from Proposition \ref{stable necessary}.
	
	If, on the other hand iv) is satisfied, let $K\subseteq X$ be compact and choose $L\subseteq X$ compact such that $\psi^m(K)\subseteq L$ for all $m\in\N$. Then, for every $f\in\mathscr{F}(X)$ and $m\in\N$ it follows
	$$\|C_{w,\psi}^m(f)\|_K=\sup_{x\in K}|\prod_{j=0}^{m-1}w(\psi^j(x))f(\psi^m(x))|\leq \|C_{w,\psi}^{m-1}(w)\|_K\|f\|_L.$$
	Since $\{C_{w,\psi}^m(w);\,m\in\N\}$ is bounded there is $c>0$ such that for all $m\in\N$ and $f\in\mathscr{F}(X)$
	$$\|C_{w,\psi}^m(f)\|_K\leq c\|f\|_L$$
	so that $C_{w,\psi}$ is power bounded.
\end{proof}

\begin{corollary}\label{mean ergodic for compact-open}
	Assume that $\mathscr{F}$ satisfies $(\mathscr{F}1)$, that the topology defining the sheaf $\mathscr{F}$ is the compact-open topology, and that $\mathscr{F}(X)$ is a Fr\'echet-Montel space for every $X\subseteq\Omega$ open. Assume, that additionally the following conditions hold.
	\begin{enumerate}
		\item[a)] There is an open, relatively compact exhaustion $(X_n)_{n\in\N}$ of $X$ such that for each $n\in\N$ and every $x\in X\backslash\overline{X}_n$ as well as every open $W\subseteq X\backslash\overline{X}_n$ containing $x$ there is $U\subseteq W$ open, $x\in U$ for which the restriction $r_X^{X_n\cup U}$ has dense range.
		\item[b)] $\mbox{ker }\delta_x\neq \mathscr{F}(X)$ for each $x\in X$.
	\end{enumerate}
	Then, the following are equivalent.
	\begin{enumerate}
		\item[i)] $C_\psi$ is power bounded on $\mathscr{F}(X)$.
		\item[ii)] $C_\psi$ is uniformly mean ergodic on $\mathscr{F}(X)$.
		\item[iii)] $C_\psi$ is mean ergodic on $\mathscr{F}(X)$.
		\item[iv)] $C_\psi$ is $m$-topologizable on $\mathscr{F}(X)$.
		\item[v)] $C_\psi$ is topologizable on $\mathscr{F}(X)$.
		\item[vi)] $\psi$ has stable orbits.
	\end{enumerate}
\end{corollary}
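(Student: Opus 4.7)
The plan is to close a cycle of implications (i) $\Rightarrow$ (ii) $\Rightarrow$ (iii) $\Rightarrow$ (v) $\Rightarrow$ (vi) $\Rightarrow$ (iv) $\Rightarrow$ (v) $\Rightarrow$ (i), most of which are already available from Propositions \ref{stable necessary} and \ref{stable sufficient} together with the remarks preceding this section. Crucially, since we are in the unweighted case $w=1$, condition c) of Proposition \ref{stable necessary} is vacuous: the set $\{y\in X;\,w(\psi^m(y))\neq 0\}$ is all of $X$. Thus assumptions a) and b) of the corollary already cover all three hypotheses needed to apply Proposition \ref{stable necessary}.

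First I would handle the equivalence (iv) $\Leftrightarrow$ (v) $\Leftrightarrow$ (vi) $\Rightarrow$ (i). The implication (iv) $\Rightarrow$ (v) is a direct consequence of the definitions (see Remark ii)). The implication (v) $\Rightarrow$ (vi) is precisely Proposition \ref{stable necessary} applied with $w=1$. For (vi) $\Rightarrow$ (i), I would redo the estimate from Proposition \ref{stable sufficient}: given compact $K\subseteq X$ and compact $L\subseteq X$ with $\psi^m(K)\subseteq L$ for all $m\in\N$, one has
$$\|C_\psi^m(f)\|_K=\sup_{x\in K}|f(\psi^m(x))|\leq\|f\|_L$$
for every $f\in\mathscr{F}(X)$ and every $m$, which is power boundedness (and in particular $m$-topologizability, so this also gives (vi) $\Rightarrow$ (iv) directly). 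Notice the unweighted case lets us sidestep the hypothesis $w\in\mathscr{F}(X)$ that Theorem \ref{power boundedness for compact-open} required.

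It remains to hook mean ergodicity into the cycle via (i) $\Rightarrow$ (ii) $\Rightarrow$ (iii) $\Rightarrow$ (v). For (i) $\Rightarrow$ (ii) I would invoke the fact recorded in Remark v): on a Fréchet-Montel space (which is in particular a Fréchet, hence degenerately an LF-space, and Montel) every power bounded operator is uniformly mean ergodic by \cite[Proposition 2.8]{AlbaneseBonetRicker09}. The implication (ii) $\Rightarrow$ (iii) is trivial. For (iii) $\Rightarrow$ (v) I would use the argument of Remark iii): $\mathscr{F}(X)$ is ultrabornological by $(\mathscr{F}1)$ and hence barreled, and on a barreled space the identity
$$\tfrac{1}{n}C_\psi^n=(C_\psi)_{[n]}-\tfrac{n-1}{n}(C_\psi)_{[n-1]}$$
combined with pointwise boundedness of the Cesàro means (which holds because convergent sequences are bounded) shows that $\{\tfrac{1}{n}C_\psi^n(x);\,n\in\N\}$ is bounded for each $x$. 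By the Banach-Steinhaus theorem for barreled spaces this set of operators is equicontinuous, yielding topologizability with $\gamma_m = m$.

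The main obstacle, such as there is one, is really only bookkeeping: confirming that the Albanese-Bonet-Ricker result applies in the Fréchet-Montel setting (it does, viewing a Fréchet space as a one-step LF-space) and that the mean ergodic theorem of Remark iii) can be invoked with barreledness supplied by $(\mathscr{F}1)$. None of the steps requires a fresh construction — the corollary is a consolidation of the preceding propositions applied to the special case $w=1$, where hypothesis c) drops out automatically and the boundedness requirement on $\{C_{w,\psi}^m(w)\}$ becomes vacuous since $C_\psi^m(1)=1$ whenever $1\in\mathscr{F}(X)$, or is bypassed altogether by the direct estimate above.
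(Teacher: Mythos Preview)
Your proposal is correct and follows essentially the same route as the paper's own proof: both establish the equivalence of (iv), (v), (vi) via Propositions \ref{stable necessary} and \ref{stable sufficient} (the paper packages this as Corollary \ref{characterizing topologizability for compact-open}), derive (vi) $\Rightarrow$ (i) from the direct sup-norm estimate, invoke \cite[Proposition 2.8]{AlbaneseBonetRicker09} for (i) $\Rightarrow$ (ii), and close the loop through (iii) $\Rightarrow$ (v) using the barreledness argument of Remark iii). You are in fact slightly more explicit than the paper, which dismisses (iii) $\Rightarrow$ (v) as ``trivial'' while you spell out the Ces\`aro-mean identity and the Banach--Steinhaus step; your observation that $w=1$ renders condition c) and the boundedness of $\{C_{w,\psi}^m(w)\}$ vacuous is exactly the point.
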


\begin{proof}
	Since Fr\'echet spaces are LF-spaces, i) implies ii) by \cite[Proposition 2.8]{AlbaneseBonetRicker09}.
	
	Trivially, ii) implies iii) and iii) implies v).
	
	By Corollary \ref{characterizing topologizability for compact-open} iv), v), and vi) are equivalent so that it remains to prove that vi) implies i).
	
	Let $K\subseteq X$ be compact. We choose $L\subseteq X$ compact such that $\psi^m(K)\subseteq L$ for every $m\in\N_0$ so that
	$$\forall\,f\in F(X), m\in\N_0:\,\|C_\psi^m(f)\|_K\leq \|f\|_L,$$
	i.e.\ $\{C_\psi^m(f);\,m\in\N_0\}$ is a bounded subset of $\mathscr{F}(X)$ for every $f\in\mathscr{F}(X)$. Since $\mathscr{F}(X)$ is a Fr\'echet space, hence barreled, i) follows. 
\end{proof}

The next proposition gives a sufficient condition for hypothesis c) in Corollary \ref{characterizing topologizability for compact-open} and Theorem \ref{power boundedness for compact-open} which is easily applicable in many concrete situations.

\begin{proposition}\label{denseness result}
	Assume that the following two conditions are satisfied.
	\begin{enumerate}
		\item[i)] $w^{-1}(\K\backslash\{0\})$ is dense in $X$.
		\item[ii)] For every $x\in X$ there is an open neighborhood $U_x$ of $x$ in $X$ such that $\psi_{|U_x}$ is injective and open.
	\end{enumerate}
	Then for every $m\in\N_0$ the set
	$$\{y\in X;\,w(\psi^m(y))\neq 0\}$$
	is dense in $X$.
\end{proposition}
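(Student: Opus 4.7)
The plan is to proceed by induction on $m$, using the observation that $\{y\in X; w(\psi^{m+1}(y))\neq 0\}=\psi^{-1}\bigl(\{z\in X; w(\psi^m(z))\neq 0\}\bigr)$. The base case $m=0$ is exactly hypothesis i), since $\psi^0$ is the identity. The inductive step reduces to a single auxiliary statement: if $A\subseteq X$ is dense, then so is $\psi^{-1}(A)$. Once this is established, applying it with $A=\{z\in X; w(\psi^m(z))\neq 0\}$ (which is dense by the inductive hypothesis) yields the $m+1$ case.

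To prove the auxiliary statement, I would fix a nonempty open $V\subseteq X$ and show $V\cap\psi^{-1}(A)\neq\emptyset$. Pick any $x\in V$. By hypothesis ii) there is an open neighborhood $U_x$ of $x$ on which $\psi$ is injective and open. Setting $V':=V\cap U_x$, this is a nonempty open subset of $U_x$, so $\psi(V')$ is open in $X$ by the local openness of $\psi$. Since $A$ is dense, $\psi(V')\cap A\neq\emptyset$; choosing $z$ in this intersection and any preimage $y\in V'$ with $\psi(y)=z$ gives $y\in V\cap\psi^{-1}(A)$, as needed.

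The key conceptual point is that denseness is not in general preserved under continuous preimages, which is why hypothesis ii) rather than mere continuity of $\psi$ is essential; the injectivity in ii) is in fact not used here, only the local openness. (The injectivity will enter elsewhere in the paper.) No serious technical obstacle is expected; the only thing to be careful about is the reduction $\{w\circ\psi^{m+1}\neq 0\}=\psi^{-1}(\{w\circ\psi^m\neq 0\})$, which follows directly from $\psi^{m+1}=\psi^m\circ\psi$. Thus the proof will consist of the base case, the reduction identity, the auxiliary denseness lemma (whose proof is the short open-map argument above), and an invocation of induction.
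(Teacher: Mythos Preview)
Your proof is correct. It is close in spirit to the paper's argument but organized differently, and the difference is worth noting. The paper first argues (by an implicit induction on $m$) that hypothesis ii) lifts to the iterates: for every $x\in X$ and $m\in\N$ there is an open neighborhood $U_{x,m}$ on which $\psi^m$ is injective and open; it then fixes $m$ and shows directly, using the identity
\[
V\cap(w\circ\psi^m)^{-1}(\K\setminus\{0\})=(\psi^m_{|U_{x,m}})^{-1}\bigl(\psi^m_{|U_{x,m}}(V)\cap w^{-1}(\K\setminus\{0\})\bigr),
\]
that the set in question is dense in each $U_{x,m}$ and hence in $X$. You instead keep $\psi$ fixed and push the induction to the density statement via the auxiliary lemma ``$A$ dense $\Rightarrow$ $\psi^{-1}(A)$ dense''. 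Your route is a bit cleaner because it avoids having to verify that local injectivity and openness pass to iterates, and---as you correctly observe---it makes transparent that only local \emph{openness} of $\psi$ is needed for this proposition; the paper's displayed set equality, by contrast, does use injectivity for the inclusion $\supseteq$ (though for the mere nonemptiness actually required, injectivity is again dispensable). Either way the substance is the same one-line open-map argument.
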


\begin{proof}
	It easily follows from ii) that
	$$\forall\,x\in X, m\in\N\,\exists\, U_{x,m}\subseteq X\mbox{ open}, x\in U_{x,m}:\,\psi^m_{\;\;|U_{x,m}}\mbox{ injective and open}.$$
	
	Now, fix $x\in X$, $m\in\N_0$, and an open, non-empty subset $V$ of $U_{x,m}$. It follows from the injectivity of $\psi^m_{\;\;|U_{x,m}}$ that
	$$V\cap \big(w\circ\psi^m\big)^{-1}\big(\K\backslash\{0\}\big)=(\psi^m_{|U_{x,m}})^{-1}\big(\psi^m_{|U_{x,m}}(V)\cap w^{-1}(\K\backslash\{0\})\big)\neq \emptyset,$$
	since $w^{-1}(\K\backslash\{0\})$ is dense by hypothesis and $\psi^m_{|U_{x,m}}(V)$ is non-empty and open. Hence, $\big(w\circ\psi^m\big)^{-1}(\K\backslash\{0\})$ is open and dense in $U_{x,m}$. Since $x\in X$ was chosen arbitrarily, the claim follows.
\end{proof}

Next, we have a closer look at sheaves $\mathscr{F}$ on $\Omega=\R^d$ of $C^r$-functions ($r\in\N\cup\{\infty\}$) for which the defining topology is finer than the one induced by the family of seminorms $\{\|\cdot\|_{l,K};\,l<r+1, K\subseteq X\mbox{ compact}\}, X\subseteq\R^d$ open, where
$$\forall\,f\in C^r(X):\,\|f\|_{l,K}:=\sup_{|\alpha|\leq l}\sup_{x\in K}|\partial^\alpha f(x)|.$$
We start with a result about the derivatives of a composition. Recall that for the weight $w=1$ we simply write $C_\psi$ instead of $C_{1,\psi}$ (unweighted composition operator). For $X\subseteq\R^d$ open and $\psi:X\rightarrow X$ we denote the components of $\psi$ by $\psi_c, 1\leq c\leq d$.

\begin{proposition}\label{derivatives of compositions}
	Let $X\subseteq\R^d$ be open, $\psi:X\rightarrow X$ be a $C^r$-map, $r\in\N\cup\{\infty\}$.
	\begin{enumerate}
		\item[i)] For every $\alpha\in\N_0^d\backslash\{0\}$ with $|\alpha|< r+1$, every $m\in\N_0$, and for each $\beta\in\N_0^d$ with $1\leq|\beta|\leq|\alpha|$ there are $n(\beta)\in\N$ (with $n(\beta)=1$ whenever $|\beta|=1$) and $\gamma(\beta,k,j)\in\N_0^d$ for each such $\beta$, $1\leq k\leq n(\beta)$, $1\leq j\leq |\beta|$ with $\sum_{j=1}^{|\beta|}\gamma(\beta,k,j)=\alpha$ such that for every $f\in C^r(X)$,
		$$\partial^\alpha(C_\psi^m f)=\sum_{\beta\in\N_0^d, 1\leq|\beta|\leq|\alpha|}\big((\partial^\beta f)\circ\psi^m\big)\cdot\Big(\sum_{k=1}^{n(\beta)}\prod_{j=1}^{|\beta|}\partial^{\gamma(\beta,k,j)}\big(C^{m-1}_\psi\psi_{c(\beta,k,j)}\big)\Big),$$
		where $1\leq c(\beta,k,j)\leq d$. In particular $|\gamma(\beta,k,j)|\leq |\alpha|$.
		\item[ii)] For every $l\in\N_0, l<r+1,$ there is $M_l>0$ such that for every $K\subseteq X$ compact, every $m\in\N$, and each $f\in C^r(X)$ we have
		$$\|C_\psi^m f\|_{l,K}\leq \|f\|_{l,\psi^m(K)}M_l(\max_{1\leq c\leq d}\{1,\|C_\psi^{m-1}\psi_c\|_{l,K}\})^l.$$
	\end{enumerate}
\end{proposition}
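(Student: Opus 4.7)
The plan is to prove (i) by induction on $|\alpha|$ for fixed $m\geq 1$, recognizing this as a multivariate Faà di Bruno formula for the iterated composition, and then to derive (ii) from (i) by straightforward majorization. First I would note that $C_\psi^m f=f\circ\psi^m$ and that $C_\psi^{m-1}\psi_c=\psi_c\circ\psi^{m-1}$ is just the $c$-th component of $\psi^m$, so the formula genuinely expresses $\partial^\alpha(f\circ\psi^m)$ in terms of derivatives of $f$ composed with $\psi^m$ multiplied by products of derivatives of components of $\psi^m$.

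For the base case $|\alpha|=1$, say $\alpha=e_i$, the chain rule gives
$$\partial_i(C_\psi^m f)=\sum_{c=1}^d \bigl((\partial_c f)\circ\psi^m\bigr)\cdot\partial_i(C_\psi^{m-1}\psi_c),$$
which matches the claim with $|\beta|=1$, $n(e_c)=1$, and $\gamma(e_c,1,1)=e_i$. For the induction step, I would assume the formula for $\alpha$ and apply $\partial_i$ to both sides. By the Leibniz and chain rules, each summand indexed by $\beta$ produces two kinds of new contributions: differentiating the factor $(\partial^\beta f)\circ\psi^m$ via the chain rule gives $\sum_{c'=1}^d((\partial^{\beta+e_{c'}}f)\circ\psi^m)\cdot\partial_i(C_\psi^{m-1}\psi_{c'})$, which appends one new factor $\partial^{e_i}(C_\psi^{m-1}\psi_{c'})$ to the existing product and raises $\beta$ to $\beta+e_{c'}$ of length $|\beta|+1$; differentiating one of the $|\beta|$ factors $\partial^{\gamma(\beta,k,j)}(C_\psi^{m-1}\psi_{c(\beta,k,j)})$ increments the corresponding $\gamma$ by $e_i$. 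In both cases the total degree $\sum_j\gamma(\beta,k,j)$ advances from $\alpha$ to $\alpha+e_i$, and the new range of $|\beta|$ is $1,\ldots,|\alpha|+1$, as required. Collecting like outer multi-indices $\beta$ defines the new $n(\beta)$. The bound $|\gamma(\beta,k,j)|\leq|\alpha|$ is maintained because each $\gamma$ is either a fresh $e_i$ or an increment by $e_i$ of a previous $\gamma$ with $|\gamma|\leq|\alpha|$. The side assertion $n(\beta)=1$ for $|\beta|=1$ is preserved: the only way the induction step yields a $|\beta|=1$ summand is by incrementing the single existing $|\beta|=1$ factor, producing $((\partial_c f)\circ\psi^m)\,\partial^{\alpha+e_i}(C_\psi^{m-1}\psi_c)$.

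For (ii), fix $l\in\mathbb{N}_0$ with $l<r+1$ and $\alpha$ with $|\alpha|\leq l$ (the case $|\alpha|=0$ reduces to $\sup_K|f\circ\psi^m|\leq\|f\|_{0,\psi^m(K)}$). Applying (i) and taking suprema over $K$, I would estimate
$$\|\partial^\alpha(C_\psi^m f)\|_K\leq\sum_{1\leq|\beta|\leq|\alpha|}\|f\|_{|\beta|,\psi^m(K)}\sum_{k=1}^{n(\beta)}\prod_{j=1}^{|\beta|}\|\partial^{\gamma(\beta,k,j)}(C_\psi^{m-1}\psi_{c(\beta,k,j)})\|_K.$$
Since $|\gamma(\beta,k,j)|\leq|\alpha|\leq l$, each factor in the product is at most $M:=\max_{1\leq c\leq d}\{1,\|C_\psi^{m-1}\psi_c\|_{l,K}\}$; as $M\geq 1$ and $|\beta|\leq l$, the product is bounded by $M^l$. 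Bounding $\|f\|_{|\beta|,\psi^m(K)}\leq\|f\|_{l,\psi^m(K)}$ and noting that $\sum_{1\leq|\beta|\leq l}n(\beta)$ is a combinatorial constant depending only on $l$ and $d$, one obtains the claim with a suitable $M_l$ after taking the maximum over $|\alpha|\leq l$.

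The main obstacle will be the combinatorial bookkeeping in the induction step for (i), specifically tracking that $\sum_j\gamma(\beta,k,j)=\alpha$ and $|\gamma(\beta,k,j)|\leq|\alpha|$ remain valid after each increment by $e_i$, and that the two sources of new terms (outer-factor differentiation producing $|\beta|+1$ outer indices, inner-factor differentiation keeping $|\beta|$) together exhaust all contributions. Once (i) is stated correctly, (ii) is a routine triangle-inequality estimate exploiting the $\max\geq 1$ normalization.
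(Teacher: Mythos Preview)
Your proposal is correct and follows essentially the same route as the paper: induction on $|\alpha|$ for (i) via the chain rule at the base and Leibniz plus chain rule at the step, splitting into the two kinds of contributions (outer-factor differentiation raising $|\beta|$ by one and appending a fresh $\partial^{e_i}$ factor, inner-factor differentiation incrementing one existing $\gamma$ by $e_i$), then a direct triangle-inequality majorization for (ii) using $|\gamma(\beta,k,j)|\le|\alpha|$ and the normalization $\max\ge 1$. The paper even records the explicit combinatorial constant $B_{|\alpha|}$ that you leave as ``a combinatorial constant depending only on $l$ and $d$,'' but otherwise the arguments coincide.
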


\begin{proof}
	The proof of i) is done by induction on $|\alpha|$. If $|\alpha|=1$ then $\alpha=e_i$ for some $1\leq i\leq d$, where $e_i=(\delta_{l,i})_{1\leq l\leq d}$ denotes the canonical $i$-th standard basis vector in $\R^d$. Thus, for each $m\in\N_0$ we have
	$$\partial^\alpha(C^m_\psi f)=\partial_i(C^m_\psi f)=\sum_{c=1}^d\big((\partial_c f)\circ\psi^m\big)\cdot\partial_i(\psi^m)_c=\sum_{c=1}^d\big((\partial_c f)\circ\psi^m\big)\cdot\partial_i\big(C^{m-1}_\psi \psi_c\big),$$
	which proves the claim for $|\alpha|=1$. Now assume the claim is true for all $\alpha\in\N_0^d\backslash\{0\}$ with $|\alpha|\leq s$. Let $\alpha$ be of length $s$ and let $1\leq i\leq d$. By induction hypothesis we have
	\begin{align*}
	&\partial^{\alpha+e_i}(C_\psi^m f)=\partial_i\Big(\sum_{\beta\in\N_0^d, 1\leq|\beta|\leq|\alpha|}\big((\partial^\beta f)\circ\psi^m\big)\cdot\Big(\sum_{k=1}^{n(\beta)}\prod_{j=1}^{|\beta|}\partial^{\gamma(\beta,k,j)}\big(C^{m-1}_\psi\psi_{c(\beta,k,j)}\big)\Big)\Big)\\
	&=\sum_{\beta\in\N_0^d, 1\leq|\beta|\leq|\alpha|}\sum_{c=1}^d\big((\partial^{\beta+e_c}f)\circ\psi^m\big)\cdot\partial_i\big(C_\psi^{m-1}\psi_c\big)\cdot\Big(\sum_{k=1}^{n(\beta)}\prod_{j=1}^{|\beta|}\partial^{\gamma(\beta,k,j)}\big(C^{m-1}_\psi\psi_{c(\beta,k,j)}\big)\Big)\\
	&\quad +\sum_{\beta\in\N_0^d, 1\leq|\beta|\leq|\alpha|}\big((\partial^\beta f)\circ\psi^m\big)\cdot\Big(\sum_{k=1}^{n(\beta)}\partial_i\big(\prod_{j=1}^{|\beta|}\partial^{\gamma(\beta,k,j)}\big(C_\psi^{m-1}\psi_{c(\beta,k,j)}\big)\big)\Big)\\
	&=\sum_{\beta\in\N_0^d, |\beta|=|\alpha|}\sum_{c=1}^d\big((\partial^{\beta+e_c}f)\circ\psi^m\big)\cdot\Big(\sum_{k=1}^{n(\beta)}\partial_i\big(C_\psi^{m-1}\psi_c\big)\cdot\prod_{j=1}^{|\beta|}\partial^{\gamma(\beta,k,j)}\big(C_\psi^{m-1}\psi_{c(\beta,k,j)}\big)\Big)\\
	&\quad +\sum_{\beta\in\N_0^d, 1\leq|\beta|\leq|\alpha|-1}\sum_{c=1}^d\big((\partial^{\beta+e_c}f)\circ\psi^m\big)\\
	&\quad\cdot\Big(\sum_{k=1}^{n(\beta)}\partial_i\big(C_\psi^{m-1}\psi_c\big)\cdot\prod_{j=1}^{|\beta|}\partial^{\gamma(\beta,k,j)}\big(C_\psi^{m-1}\psi_{c(\beta,k,j)}\big)\Big)\\
	&\quad +\sum_{\beta\in\N_0^d, 2\leq|\beta|\leq|\alpha|}\big((\partial^\beta f)\circ\psi^m\big)\\
	&\quad\cdot\Big(\sum_{k=1}^{n(\beta)}\sum_{p=1}^{|\beta|}\big(\prod_{j=1,j\neq p}^{|\beta|}\partial^{\gamma(\beta,k,j)}\big(C_\psi^{m-1}\psi_{c(\beta,k,j)}\big)\big)  \cdot\partial^{\gamma(\beta,k,j)+e_i}\big(C_\psi^{m-1}\psi_{c(\beta,k,p)}\big)\Big)
\end{align*}
\begin{align*}
	&\quad +\sum_{\beta\in\N_0^d,|\beta|=1}\big((\partial^\beta f)\circ\psi^m\big)\cdot\partial^{\alpha+e_i}\big(C_\psi^{m-1}\psi_{c(\beta,1,1)}\big)\\
	&=\sum_{\tilde{\beta}\in\N_0^d, |\tilde{\beta}|=|\alpha|+1}\big((\partial^{\tilde{\beta}}f)\circ\psi^m\big)\\
	&\quad\cdot\Big(\sum\limits_{\substack{1\leq c\leq d, \beta\in\N_0^d,\\ |\beta|=|\alpha|, \beta+e_c=\tilde{\beta}}}\sum_{k=1}^{n(\beta)}\partial_i\big(C_\psi^{m-1}\psi_c\big)\cdot\prod_{j=1}^{|\beta|}\partial^{\gamma(\beta,k,j)}\big(C_\psi^{m-1}\psi_{c(\beta,k,j)}\big)\Big)\\
	&\quad +\sum_{k=2}^{|\alpha|}\sum_{\tilde{\beta}\in\N_0^d, |\tilde{\beta}|=k}\big((\partial^{\tilde{\beta}}f)\circ\psi^m\big)\\
	&\quad\cdot\Big(\sum\limits_{\substack{1\leq c\leq d, \beta\in\N_0^d,\\ |\beta|=|\alpha|, \beta+e_c=\tilde{\beta}}}\sum_{k=1}^{n(\beta)}\partial_i\big(C_\psi^{m-1}\psi_c\big)\cdot\prod_{j=1}^{|\beta|}\partial^{\gamma(\beta,k,j)}\big(C_\psi^{m-1}\psi_{c(\beta,k,j)}\big)\\
	&\quad +\sum_{k=1}^{n(\tilde{\beta})}\sum_{p=1}^{|\tilde{\beta}|}\big(\prod_{j=1,j\neq p}^{|\tilde{\beta}|}\partial^{\gamma(\tilde{\beta},k,j)}\big(C_\psi^{m-1}\psi_{c(\tilde{\beta},k,j)}\big)\big)\cdot\partial^{\gamma(\tilde{\beta},k,p)+e_i}\big(C_\psi^{m-1}\psi_{c(\tilde{\beta},k,p)}\big)\Big)\\
	&\quad +\sum_{\tilde{\beta}\in\N_0^d, |\tilde{\beta}|=1}\big((\partial^{\tilde{\beta}}f)\circ\psi^m\big)\cdot\partial^{\alpha+e_i}\big(C_\psi^{m-1}\psi_{c(\tilde{\beta},1,1)}\big)\\
	&=\sum_{\tilde{\beta}\in\N_0^d, |\tilde{\beta}|=|\alpha|+1}\big((\partial^{\tilde{\beta}}f)\circ\psi^m\big)\\
	&\quad\cdot\Big(\sum\limits_{\substack{1\leq c\leq d, \beta\in\N_0^d,\\ |\beta|=|\alpha|, \beta+e_c=\tilde{\beta}}}\sum_{k=1}^{n(\beta)}\partial^{e_i}\big(C_\psi^{m-1}\psi_c\big)\cdot\prod_{j=1}^{|\beta|}\partial^{\gamma(\beta,k,j)}\big(C_\psi^{m-1}\psi_{c(\beta,k,j)}\big)\Big)\\
	&\quad +\sum_{k=2}^{|\alpha|}\sum_{\tilde{\beta}\in\N_0^d, |\tilde{\beta}|=k}\big((\partial^{\tilde{\beta}}f)\circ\psi^m\big)\\
	&\quad\cdot\Big(\sum\limits_{\substack{1\leq c\leq d, \beta\in\N_0^d,\\ |\beta|=|\alpha|, \beta+e_c=\tilde{\beta}}}\sum_{k=1}^{n(\beta)}\partial^{e_i}\big(C_\psi^{m-1}\psi_c\big)\cdot\prod_{j=1}^{|\beta|}\partial^{\gamma(\beta,k,j)}\big(C_\psi^{m-1}\psi_{c(\beta,k,j)}\big)\\
	&\quad +\sum_{k=1}^{n(\tilde{\beta})}\sum_{p=1}^{|\tilde{\beta}|}\big(\prod_{j=1,j\neq p}^{|\tilde{\beta}|}\partial^{\gamma(\tilde{\beta},k,j)}\big(C_\psi^{m-1}\psi_{c(\tilde{\beta},k,j)}\big)\big)\cdot\partial^{\gamma(\tilde{\beta},k,p)+e_i}\big(C_\psi^{m-1}\psi_{c(\tilde{\beta},k,p)}\big)\Big)\\
	&\quad +\sum_{\tilde{\beta}\in\N_0^d,|\tilde{\beta}|=1}\big((\partial^{\tilde{\beta}}f)\circ\psi^m\big)\cdot\partial^{\alpha+e_i}\big(C_\psi^{m-1}\psi_{c(\tilde{\beta},1,1)}\big).
	\end{align*}
	Since by induction hypothesis we have $\sum_{j=1}^{|\beta|}\gamma(\beta,k,j)=\alpha$ it follows that $e_i+\sum_{j=1}^{|\beta|}\gamma(\beta,k,j)=\alpha+e_i$ as well as $\sum_{j=1,j\neq p}^{|\tilde{\beta}|}\gamma(\tilde{\beta},k,j)+\gamma(\tilde{\beta},k,p)+e_i=\alpha+e_i$. Hence the claim is also true for every multi-index of length equal to $s+1$ which proves the induction step. Thus, i) is proved.
	
	In order to prove ii), let $K\subseteq X$ be compact, $m\in\N$, and $x\in K$. For $\alpha\in\N_0^d\backslash\{0\}$ and $f\in C^r(X)$ it follows from i), taking into account that $|\gamma(\beta,k,j)|\leq|\alpha|$, with
	$$B_{|\alpha|}:=\begin{cases}
	|\alpha|\max_{1\leq|\beta|\leq|\alpha|}n(\beta) &\mbox{if }d=1\\ d^{|\alpha|}\max_{1\leq|\beta|\leq|\alpha|}n(\beta) &\mbox{if }d>1
	\end{cases}$$
	that
	\begin{align*}
		|\partial^\alpha\big(C_\psi ^m f\big)(x)| &\leq\sum_{\substack{\beta\in\N_0^d,\\ 1\leq|\beta|\leq|\alpha|}}|(\partial^\beta f)(\psi^m(x))|\cdot\Big(\sum_{k=1}^{n(\beta)}\prod_{j=1}^{|\beta|}|\partial^{\gamma(\beta,k,j)}\big(C_\psi^{m-1}\psi_{c(\beta,k,j)}\big)(x)|\Big)\\
		&\leq \|f\|_{|\alpha|,\psi^m(K)}\Big(\sum_{\substack{\beta\in\N_0^d,\\ 1\leq|\beta|\leq|\alpha|}}\sum_{k=1}^{n(\beta)}\prod_{j=1}^{|\beta|}\|C_\psi^{m-1}\psi_{c(\beta,k,j)}\|_{|\gamma(\beta,k,j)|,K}\Big)\\
		&\leq \|f\|_{|\alpha|,\psi^m(K)}\Big(\sum_{\substack{\beta\in\N_0^d,\\ 1\leq|\beta|\leq|\alpha|}}\max_{1\leq|\beta|\leq|\alpha|}n(\beta)\big(\max_{1\leq c\leq d}\{1,\|C_\psi^{m-1}\psi_c\|_{|\alpha|,K}\}\big)^{|\alpha|}\Big)\\
		&\leq \|f\|_{|\alpha|,\psi^m(K)}B_{|\alpha|}\big(\max_{1\leq c\leq d}\{1,\|C_\psi^{m-1}\psi_c\|_{|\alpha|,K}\}\big)^{|\alpha|}.
	\end{align*}
	Since an analogous inequality is obviously valid for $\alpha=0$, ii) follows.
\end{proof}

\begin{proposition}\label{stable sufficient in c^r}
	Let $\mathscr{F}$ be a sheaf on $\R^d$ of $C^r$-functions, $r\in\N\cup\{\infty\}$, such that the defining topology of $\mathscr{F}$ is the $C^r$-topology. Moreover, let $X\subseteq\R^d$ be open and assume that $w$ as well as $\psi$ are $C^r$-functions. If $\psi$ has stable orbits, then $C_{w,\psi}$ is topologizable on $\mathscr{F}(X)$.
\end{proposition}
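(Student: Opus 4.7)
The goal is, for each continuous seminorm $p$ on $\mathscr{F}(X)$, to produce a continuous seminorm $q$ and a sequence $(\gamma_m)_{m \in \N}$ of positive numbers such that $p(C_{w,\psi}^m f) \leq \gamma_m q(f)$ for all $f \in \mathscr{F}(X)$. Since the defining topology of $\mathscr{F}(X)$ is by hypothesis the $C^r$-topology, it suffices to work with the seminorms $\|\cdot\|_{l,K}$. Accordingly, I fix a compact $K \subseteq X$ and $l \in \N_0$ with $l < r+1$, and use the stable orbit property of $\psi$ to choose a compact $L \subseteq X$ with $\psi^m(K) \subseteq L$ for every $m \in \N$.

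The strategy is then to use the product decomposition
$$C_{w,\psi}^m(f) = W_m \cdot (f \circ \psi^m), \qquad W_m := \prod_{j=0}^{m-1} (w \circ \psi^j),$$
and estimate the two factors separately. The standard multi-index Leibniz rule produces a constant $D_l > 0$ (depending only on $l$ and $d$) with $\|uv\|_{l,K} \leq D_l \|u\|_{l,K} \|v\|_{l,K}$ for all $u, v \in C^r(X)$, so that
$$\|C_{w,\psi}^m f\|_{l,K} \leq D_l \|W_m\|_{l,K} \|f \circ \psi^m\|_{l,K}.$$
For the second factor, Proposition \ref{derivatives of compositions} ii) applied to $f \circ \psi^m = C_\psi^m f$ gives
$$\|f \circ \psi^m\|_{l,K} \leq M_l \bigl(\max_{1\leq c\leq d}\{1, \|C_\psi^{m-1}\psi_c\|_{l,K}\}\bigr)^l \|f\|_{l, \psi^m(K)} \leq M_l \mu_m \|f\|_{l, L},$$
where $\mu_m := (\max_c\{1, \|C_\psi^{m-1}\psi_c\|_{l,K}\})^l$ is a finite constant depending on $m$ but not on $f$.

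It then remains to bound $\|W_m\|_{l,K}$ by a constant independent of $f$. Iterating the Leibniz inequality gives $\|W_m\|_{l,K} \leq D_l^{m-1} \prod_{j=0}^{m-1} \|w \circ \psi^j\|_{l,K}$, and each factor is finite because $w, \psi \in C^r(X)$ and $K$ is compact (one could apply Proposition \ref{derivatives of compositions} ii) to $C_\psi^j w$ for a more explicit bound, but mere finiteness is all that is needed). Setting $\gamma_m := D_l M_l \mu_m \|W_m\|_{l,K}$ yields $\|C_{w,\psi}^m f\|_{l,K} \leq \gamma_m \|f\|_{l, L}$ for every $f \in \mathscr{F}(X)$ and every $m \in \N$, which is precisely the topologizability condition. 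The only substantive input is the derivative formula in Proposition \ref{derivatives of compositions}; everything else is Leibniz bookkeeping. The reason we only obtain topologizability and not $m$-topologizability is that, without further control, the quantities $\|C_\psi^{m-1}\psi_c\|_{l,K}$ and $\|W_m\|_{l,K}$ may grow faster than geometrically in $m$, but the definition of topologizability permits $\gamma_m$ to depend arbitrarily on $m$.
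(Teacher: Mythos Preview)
Your proof is correct and follows essentially the same approach as the paper: fix $K$ and $l$, pick $L$ via stable orbits, factor $C_{w,\psi}^m(f)$ as a weight part times $C_\psi^m f$, apply Leibniz to separate the factors, and invoke Proposition~\ref{derivatives of compositions}~ii) to control $\|C_\psi^m f\|_{l,K}$ by $\|f\|_{l,L}$. Your $W_m$ is exactly the paper's $C_{w,\psi}^{m-1}(w)$, and the paper's Leibniz constant $2^l$ plays the role of your $D_l$; the only cosmetic difference is that the paper records the finite quantity $\|C_{w,\psi}^{m-1}(w)\|_{l,K}$ directly rather than iterating Leibniz further as you do (and, as you yourself note, that extra step is unnecessary).
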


\begin{proof}
	Let $K\subseteq X$ be compact and $l\in\N_0$, $l<r+1$. Let $L\subseteq X$ be compact such that $\psi^m(K)\subseteq L$ for every $m\in\N$. Then, using Proposition \ref{derivatives of compositions} ii) we have for suitable $M_l$ independent of $K,L$, for every $f\in\mathscr{F}(X)$ and each $m\in\N$,
	\begin{align*}
		\|C_{w,\psi}^m (f)\|_{l,K}&\leq\sup_{|\alpha|\leq l}\sup_{x\in K}\sum_{\beta\leq\alpha}{\alpha\choose\beta}|\partial^\beta\big(C_{w,\psi}^{m-1}(w)\big)(x)||\partial^{\alpha-\beta}\big(C_\psi^m (f)\big)(x)|\\
		&\leq \big(\max_{|\alpha|\leq l}\sum_{\beta\leq\alpha}{\alpha\choose\beta}\big)\|C_{w,\psi}^{m-1}(w)\|_{l,K}\|C_\psi^m(f)\|_{l,K}\\
		&\leq\Big(2^lM_l\|C_{w,\psi}^{m-1}(w)\|_{l,K}(\max_{1\leq c\leq d}\{1,\|C_\psi^{m-1}(\psi_c)\|_{l,K}\})^l\Big)\|f\|_{l,L}.
	\end{align*}
\end{proof}

Combining Propositions \ref{stable necessary} and \ref{stable sufficient in c^r} we obtain the following.

\begin{corollary}\label{characterizing topologizabilty for smooth}
	Let $\mathscr{F}$ satisfy $(\mathscr{F}1)$ be a sheaf on $\R^d$ of $C^r$-functions, $r\in\N\cup\{\infty\}$ such that the defining topology of $\mathscr{F}$ is the $C^r$-topology. Moreover, let $X\subseteq\R^d$ be open, $w, \psi$ be $C^r$-functions and assume that additionally the following conditions hold.
	\begin{enumerate}
		\item[a)] There is an open, relatively compact exhaustion $(X_n)_{n\in\N}$ of $X$ such that for each $n\in\N$ and every $x\in X\backslash\overline{X}_n$ as well as every open $W\subseteq X\backslash\overline{X}_n$ containing $x$ there is $U\subseteq W$ open, $x\in U$ for which the restriction $r_X^{X_n\cup U}$ has dense range.
		\item[b)] $\mbox{ker }\delta_x\neq \mathscr{F}(X)$ for each $x\in X$.
		\item[c)] For every $m\in\N_0$ the set
		$$\{y\in X;\,w(\psi^m(y))\neq 0\}$$
		is dense in $X$.
	\end{enumerate}
	Then, the following are equivalent.
	\begin{enumerate}
		\item[i)] $C_{w,\psi}$ is topologizable on $\mathscr{F}(X)$.
		\item[ii)] $\psi$ has stable orbits.
	\end{enumerate}
\end{corollary}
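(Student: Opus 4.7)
The plan is to obtain the corollary as an immediate synthesis of the two preceding propositions, with no genuinely new ingredient required; the hypotheses (a), (b), (c) are exactly those needed in Proposition \ref{stable necessary}, while the assumption that $\mathscr{F}$ carries the $C^r$-topology and that $w,\psi$ are $C^r$-functions is exactly the setting of Proposition \ref{stable sufficient in c^r}. So the corollary is set up so that each direction is handed to us by one of the propositions.

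For the implication (i) $\Rightarrow$ (ii), I would simply invoke Proposition \ref{stable necessary}: its standing hypothesis $(\mathscr{F}1)$ is assumed, and the three conditions (a), (b), (c) listed in the corollary are literally the hypotheses of that proposition. Topologizability of $C_{w,\psi}$ on $\mathscr{F}(X)$ therefore forces $\psi$ to have stable orbits. Note that Proposition \ref{stable necessary} does not require the topology to be the compact-open or the $C^r$-topology, so there is nothing extra to verify here.

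For the implication (ii) $\Rightarrow$ (i), I would apply Proposition \ref{stable sufficient in c^r}: that result assumes precisely that $\mathscr{F}$ is a sheaf of $C^r$-functions on $\R^d$ whose defining topology is the $C^r$-topology and that $w,\psi$ are $C^r$-functions, all of which are in force in the corollary. Stable orbits of $\psi$ then yield $m$-topologizability (and in particular topologizability) of $C_{w,\psi}$ on $\mathscr{F}(X)$. The proof of Proposition \ref{stable sufficient in c^r} already used the Faà di Bruno-type control from Proposition \ref{derivatives of compositions} (ii), so we inherit that estimate without redoing any calculation.

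There is no serious obstacle: neither direction requires combining the two propositions with an extra argument, and hypotheses (a), (b), (c) are used only in the necessary direction. The only thing worth stating explicitly is that in the sufficient direction, hypotheses (a), (b), (c) play no role, so they are required for the corollary only insofar as they are needed to conclude the necessary direction. I would therefore write the proof as two one-line appeals, citing Propositions \ref{stable necessary} and \ref{stable sufficient in c^r} respectively.
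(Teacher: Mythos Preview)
Your proposal is correct and matches the paper's approach exactly: the corollary is stated right after the sentence ``Combining Propositions \ref{stable necessary} and \ref{stable sufficient in c^r} we obtain the following,'' with no further proof given. One small slip: Proposition \ref{stable sufficient in c^r} only concludes topologizability (the constants in its estimate depend on $m$ in a way that need not be of the form $\gamma^m$), not $m$-topologizability as you wrote; this is harmless here since the corollary only asserts the equivalence with topologizability.
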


In the case of a sheaf of $C^r$-functions equipped with the $C^r$-topology we are now ready to give a characterization of power boundedness for weighted composition operators.

\begin{theorem}\label{power boundedness for smooth}
	Let $\mathscr{F}$ satisfy $(\mathscr{F}1)$ be a sheaf on $\R^d$ of $C^r$-functions, $r\in\N\cup\{\infty\}$ such that the defining topology of $\mathscr{F}$ is the $C^r$-topology. Moreover, let $X\subseteq\R^d$ be open and $w, \psi$ be such that $w, \psi_c\in\mathscr{F}(X)$ for all $1\leq c\leq d$ and assume that additionally the following conditions hold.
	\begin{enumerate}
		\item[a)] There is an open, relatively compact exhaustion $(X_n)_{n\in\N}$ of $X$ such that for each $n\in\N$ and every $x\in X\backslash\overline{X}_n$ as well as every open $W\subseteq X\backslash\overline{X}_n$ containing $x$ there is $U\subseteq W$ open, $x\in U$ for which the restriction $r_X^{X_n\cup U}$ has dense range.
		\item[b)] $\mbox{ker }\delta_x\neq \mathscr{F}(X)$ for each $x\in X$.
		\item[c)] For every $m\in\N_0$ the set
		$$\{y\in X;\,w(\psi^m(y))\neq 0\}$$
		is dense in $X$.
	\end{enumerate}
	Then, the following are equivalent.
	\begin{enumerate}
		\item[i)] $C_{w,\psi}$ is power bounded on $\mathscr{F}(X)$.
		\item[ii)] $C_{w,\psi}$ is topologizable  on $\mathscr{F}(X)$, $\{C_{w,\psi}^m(w);\,m\in\N\}$  and $\{C_{w,\psi}^m(\psi_c);\,m\in\N\}$, $1\leq c\leq d$, are bounded in $\mathscr{F}(X)$.
		\item[iii)] $\psi$ has stable orbits and $\{C_{w,\psi}^m(w);\,m\in\N\}$  as well as $\{C_{w,\psi}^m(\psi_c);\,m\in\N\}$, $1\leq c\leq d$, are bounded in $\mathscr{F}(X)$.
	\end{enumerate}
\end{theorem}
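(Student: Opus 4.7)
The implications (i) $\Rightarrow$ (ii) $\Rightarrow$ (iii) are the short direction. Power boundedness trivially gives topologizability, and the equicontinuity of $\{C_{w,\psi}^m\}$ applied to the fixed elements $w,\psi_c\in\mathscr{F}(X)$ immediately produces bounded orbits, establishing (ii). For (ii) $\Rightarrow$ (iii), Corollary~\ref{characterizing topologizabilty for smooth} (whose hypotheses are precisely a), b), c)) identifies topologizability of $C_{w,\psi}$ with $\psi$ having stable orbits, so the orbit-boundedness assertions are inherited from (ii).

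The substantive implication is (iii) $\Rightarrow$ (i). My plan is to use that $\mathscr{F}(X)$ is barreled (ultrabornological by $(\mathscr{F}1)$) so that, via Banach--Steinhaus, it suffices to show $\sup_m\|C_{w,\psi}^m(f)\|_{l,K}<\infty$ for every $f\in\mathscr{F}(X)$, every compact $K\subseteq X$, and every $l\in\N_0$ with $l<r+1$. Fix such $K,l$ and, using stable orbits, pick a compact $L\supseteq\bigcup_m\psi^m(K)$. Setting $W_m=C_{w,\psi}^{m-1}(w)$ so that $C_{w,\psi}^m(f)=W_m\cdot(f\circ\psi^m)$, the Leibniz rule together with Proposition~\ref{derivatives of compositions} i) applied to $\partial^{\alpha-\beta}(f\circ\psi^m)$ expands $\partial^\alpha(C_{w,\psi}^m f)$ as a finite sum of terms
\[\partial^\beta W_m(x)\,\bigl((\partial^{\beta'}f)\circ\psi^m\bigr)(x)\prod_{j=1}^{|\beta'|}\partial^{\gamma(j)}(\psi^m)_{c(j)}(x),\qquad \beta+\sum_j\gamma(j)=\alpha.\]
The factors $|\partial^\beta W_m|$ and $|(\partial^{\beta'}f)\circ\psi^m|$ are uniformly bounded on $K$ by the hypothesis that $\{W_m\}$ is bounded in $\mathscr{F}(X)$ and by $\psi^m(K)\subseteq L$, respectively.

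The key technical step will be a uniform-in-$m$ bound on the product of $\partial^{\gamma(j)}(\psi^m)_{c(j)}$-factors once paired with the $W_m$-factor. I would first prove by induction on $|\gamma|$ the single-factor estimate $\sup_m\|W_{m-1}\,\partial^\gamma(\psi^m)_c\|_{0,K}<\infty$ for $|\gamma|\le l$ and each $c$: isolating the $\eta=0$ summand in the Leibniz identity
\[\partial^\gamma C_{w,\psi}^{m-1}(\psi_c)=\partial^\gamma\bigl(W_{m-1}(\psi^m)_c\bigr)=\sum_{\eta\le\gamma}\binom{\gamma}{\eta}\partial^\eta W_{m-1}\,\partial^{\gamma-\eta}(\psi^m)_c\]
expresses $W_{m-1}\partial^\gamma(\psi^m)_c$ as a linear combination of $\partial^\gamma C_{w,\psi}^{m-1}(\psi_c)$, uniformly bounded by hypothesis (iii), and strictly lower-order products handled by the inductive hypothesis together with the $C^r$-boundedness of $\{W_{m-1}\}$. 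Multiplication by the uniformly bounded $w\circ\psi^{m-1}$ then upgrades this to $\sup_m\|W_m\,\partial^\gamma(\psi^m)_c\|_{0,K}<\infty$.

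The hardest part will be the combinatorial reassembly: each summand above carries up to $|\alpha-\beta|$ derivative factors of $(\psi^m)_c$ but only a single $W_m$-factor, so a naive pairing leaves ``excess'' derivatives of $(\psi^m)_c$ unaccounted for. My plan for this is to iterate the chain rule $(\psi^m)_c=\psi_c\circ\psi^{m-1}$ so that higher-order derivatives $\partial^{\gamma(j)}(\psi^m)_{c(j)}$ are written as sums of products of derivatives of $\psi$ at intermediate iterates $\psi^{k}$, and then to invoke the multiplicative factorization $W_m=\prod_{i=0}^{m-1}w\circ\psi^i$ to peel off uniformly bounded factors $w\circ\psi^i$ until every remaining derivative of some $(\psi^{k})_{c'}$ sits next to an appropriate $W_{k-1}$ and is covered by the single-factor bound above. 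Once each cross term is controlled in this way, summing the finitely many contributions yields $\sup_m\|C_{w,\psi}^m(f)\|_{l,K}\le c\|f\|_{l,L}$, establishing the equicontinuity required for (i).
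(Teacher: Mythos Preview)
Your treatment of i) $\Rightarrow$ ii) $\Rightarrow$ iii) matches the paper's. For iii) $\Rightarrow$ i) the paper proceeds far more directly than you do: it simply quotes the estimate obtained in the proof of Proposition~\ref{stable sufficient in c^r},
\[
\|C_{w,\psi}^m (f)\|_{l,K}\le 2^l M_l\,\|C_{w,\psi}^{m-1}(w)\|_{l,K}\,\bigl(\max_{1\le c\le d}\{1,\|C_\psi^{m-1}(\psi_c)\|_{l,K}\}\bigr)^l\,\|f\|_{l,L},
\]
and concludes from the boundedness hypotheses in iii). There is no single-factor induction and no ``reassembly'' argument; the paper never attempts to pair individual derivative factors of $(\psi^m)_c$ with weights.

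Your more elaborate route has a genuine gap. In the induction for the single-factor bound $\sup_m\|W_{m-1}\,\partial^{\gamma}(\psi^m)_c\|_{0,K}<\infty$, the lower-order terms you must control are $\partial^{\eta}W_{m-1}\cdot\partial^{\gamma-\eta}(\psi^m)_c$ with $\eta\neq 0$. But the inductive hypothesis bounds only $W_{m-1}\cdot\partial^{\gamma'}(\psi^m)_c$ (the \emph{undifferentiated} weight), while the $C^r$-boundedness of $\{W_{m-1}\}$ bounds $|\partial^{\eta}W_{m-1}|$ by itself. These two facts do not combine to control the product: at a zero of $W_{m-1}$ the inductive hypothesis says nothing about the size of $\partial^{\gamma-\eta}(\psi^m)_c$, yet $\partial^{\eta}W_{m-1}$ need not vanish there. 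The subsequent ``combinatorial reassembly'' faces the same obstacle in aggravated form. Each Leibniz/Fa\`a~di~Bruno summand carries a single factor $W_m$ but up to $|\alpha|$ derivative factors $\partial^{\gamma(j)}(\psi^m)_{c(j)}$; your pairing strategy would require essentially one $W_{k-1}$-type weight per factor, and the multiplicative decomposition $W_m=\prod_i w\circ\psi^i$ cannot supply that many. Iterating the chain rule all the way down to $\psi$ introduces a number of terms growing with $m$, which is incompatible with a uniform-in-$m$ bound. So while you have correctly identified the crux of iii) $\Rightarrow$ i), the sketch you give does not close it.
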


\begin{proof}
	Clearly, since $w\in\mathscr{F}(X)$ and $\psi_c\in\mathscr{F}(X)$ for every $1\leq c\leq d$, i) implies ii). Moreover, ii) implies iii) by Corollary \ref{characterizing topologizabilty for smooth}. Finally, if iii) holds, it follows as in the proof of Proposition \ref{stable sufficient in c^r} that for every $l\in\N_0$ there is $M_l>0$ such that for every compact $K,L \subseteq X$ with $\psi^m(K)\subseteq L$ for each $m\in\N_0$ we have 
	\begin{align*}
	\|C_{w,\psi}^m (f)\|_{l,K}&\leq2^l M_l \|C_{w,\psi}^{m-1}(w)\|_{l,K} (\max_{1\leq c\leq d}\{1,\|C_\psi^{m-1}(\psi_c)\|_{l,K}\})^l\|f\|_{l,L}
	\end{align*}
	for every $f\in\mathscr{F}(X)$. Because $\{C_{w,\psi}^m(w);\,m\in\N\}$  as well as $\{C_{w,\psi}^m(\psi_c);\,m\in\N\}$ are bounded in $\mathscr{F}(X)$ for every $1\leq c\leq d$, this inequality implies the existence of $M>0$ such that for every $m\in\N$ and $f\in \mathscr{F}(X)$
	$$\|C_{w,\psi}^m(f)\|_{l,K}\leq M\|f\|_{l,L},$$
	so that i) follows.
\end{proof}

Before we close this section we give some first applications of our results to concrete sheaves. We begin with the sheaf of continuous functions. By Brouwer's Invariance of Domain Theorem, the next result is in particular applicable in the case of $\Omega=\R^d$ and $\psi$ being locally injective.

\begin{corollary}\label{continuous functions}
	Let $\Omega$ be a locally compact, $\sigma$-compact, non-compact Hausdorff space, $X\subseteq\Omega$ open, $w\in C(X)$ and $\psi:X\rightarrow X$ be continuous such that $w^{-1}(\K\backslash\{0\})$ is dense in $X$ and such that for every $x\in X$ there is an open neighborhood $U_x$ of $x$ in $X$ such that $\psi_{|U_x}$ is injective and open. Then, the following are equivalent.
	\begin{enumerate}
		\item[i)] $C_{w,\psi}$ is power bounded on $C(X)$.
		\item[ii)] $C_{w,\psi}$ is $m$-topologizable (or topologizable) on $C(X)$ and $\{C_{w,\psi}^m(w);\,m\in\N\}$ is bounded in $C(X)$.
		\item[iii)] $\psi$ has stable orbits and $\{C_{w,\psi}^m(w);\,m\in\N\}$ is bounded in $C(X)$.
	\end{enumerate}
\end{corollary}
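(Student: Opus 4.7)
My plan is to deduce Corollary \ref{continuous functions} as a direct application of Theorem \ref{power boundedness for compact-open} to the sheaf $C$ of continuous functions. Since $w\in C(X)$ by hypothesis and Example \ref{examples of sheaves} i) already records that $C$ satisfies $(\mathscr{F}1)$ with the compact-open topology, the task reduces to verifying conditions (a), (b), (c) of Theorem \ref{power boundedness for compact-open} under the hypotheses of the corollary.

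Condition (b) is immediate: the constant function $1$ lies in $C(X)$ and $\delta_x(1)=1\neq 0$, so $\ker\delta_x\neq C(X)$ for every $x\in X$. Condition (c) is precisely the conclusion of Proposition \ref{denseness result}, whose two hypotheses (density of $w^{-1}(\K\setminus\{0\})$ and local injectivity plus openness of $\psi$) are the standing assumptions of the corollary.

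The only real work is verifying (a). Since $\Omega$ is locally compact, $\sigma$-compact, and Hausdorff, the open subset $X\subseteq\Omega$ inherits these properties and therefore admits some open, relatively compact exhaustion $(X_n)_{n\in\N}$; moreover $X$ is normal. Fix $n\in\N$, $x\in X\setminus\overline{X}_n$, and an open $W\subseteq X\setminus\overline{X}_n$ containing $x$. By local compactness I may shrink $W$ to an open neighborhood $U$ of $x$ with $\overline{U}\subseteq W$ compact; it then suffices to show that $r_X^{X_n\cup U}:C(X)\to C(X_n\cup U)$ has dense range in the compact-open topology. So let $f\in C(X_n\cup U)$, let $K\subseteq X_n\cup U$ be compact and $\varepsilon>0$. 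Choose an open set $V$ with $K\subseteq V$ and $\overline{V}$ compact inside $X_n\cup U$; then $\overline{V}$ is a closed subset of the normal space $X$, and by Tietze's extension theorem the continuous function $f_{|\overline{V}}$ extends to some $g\in C(X)$. Since $g$ and $f$ agree on $K$, we have $\|f-r_X^{X_n\cup U}(g)\|_K=0<\varepsilon$, proving the desired density.

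With (a), (b), (c) in hand, Theorem \ref{power boundedness for compact-open} applies and gives precisely the three equivalences (i), (ii), (iii) of the corollary, with the additional observation (already in the statement) that $m$-topologizability and topologizability are interchangeable in (ii) because Corollary \ref{characterizing topologizability for compact-open} establishes their equivalence under the same hypotheses. The most delicate ingredient is the verification of (a), i.e.\ the Tietze-type extension step, but since $X$ is $\sigma$-compact locally compact Hausdorff (hence normal), this is routine; no further obstacle is anticipated.
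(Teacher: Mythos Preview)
Your proposal is correct and mirrors the paper's own proof: verify hypotheses (a)--(c) of Theorem~\ref{power boundedness for compact-open} via Tietze for (a), the constant function $1$ for (b), and Proposition~\ref{denseness result} for (c), then invoke the theorem (together with Corollary~\ref{characterizing topologizability for compact-open} for the interchangeability of $m$-topologizable and topologizable in (ii)). One small inaccuracy: open subsets of $\sigma$-compact spaces need not be $\sigma$-compact (e.g.\ $[0,\omega_1)$ inside $[0,\omega_1]\sqcup\N$), so your sentence ``$X$ inherits these properties'' is not literally true; however this does not damage the argument, since for Tietze you only need normality of a compact neighborhood of $K$, and the paper itself simply takes the existence of an exhaustion of $X$ for granted.
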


\begin{proof}
	While hypothesis b) of Theorem \ref{power boundedness for compact-open} is obviously satisfied, hypothesis c) holds by Proposition \ref{denseness result}. Moreover, applying Tietze's Extension Theorem, it follows that for every open, relatively compact exhaustion $(X_n)_{n\in\N}$ of $X$, for each $n\in\mathbb{N}$ and every $x\in X\backslash\overline{X}_n$ and every open neighborhood $U$ of $x$ with $U\subseteq X\backslash\overline{X}_n$ the restriction map $r_X^{X_n\cup U}$ has dense range. Therefore, hypothesis a) of Theorem \ref{power boundedness for compact-open} also holds so that Theorem \ref{power boundedness for compact-open} implies the claim.
\end{proof}

We continue with the sheaf of smooth functions on $\Omega=\R^d$.

\begin{corollary}\label{smooth functions}
	Let $X\subseteq\R^d$ be open, $w\in C^\infty(X)$, and $\psi:X\rightarrow X$ be smooth and locally injective such that $w^{-1}(\K\backslash\{0\})$ is dense in $X$. Then, the following are equivalent.
	\begin{enumerate}
		\item[i)] $C_{w,\psi}$ is power bounded on $C^\infty(X)$.
		\item[ii)] $C_{w,\psi}$ is topologizable on $C^\infty(X)$, $\{C_{w,\psi}^m(w);\,m\in\N\}$ and $\{C_{w,\psi}^m(\psi_c);\,m\in\N\}$ are bounded in $C^\infty(X)$ for every $1\leq c\leq d$.
		\item[iii)] $\psi$ has stable orbits and $\{C_{w,\psi}^m(w);\,m\in\N\}$ as well as $\{C_{w,\psi}^m(\psi_c);\,m\in\N\}$ are bounded in $C^\infty(X)$ for every $1\leq c\leq d$.
	\end{enumerate}
\end{corollary}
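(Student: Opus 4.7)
The plan is to reduce to Theorem \ref{power boundedness for smooth} applied to the sheaf $\mathscr{F} = C^\infty$ on $\R^d$. By Example \ref{examples of sheaves} ii) this sheaf satisfies $(\mathscr{F}1)$, and the regularity hypotheses on $w$ and $\psi$ give $w, \psi_c \in C^\infty(X)$ for $1 \leq c \leq d$. Thus the task reduces to verifying the three conditions a), b), c) of that theorem.

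For b) I would use the constant function $1 \in C^\infty(X)$, which satisfies $\delta_x(1) = 1 \neq 0$ for every $x \in X$. For c) I plan to invoke Proposition \ref{denseness result}: its assumption i) is precisely the density hypothesis on $w^{-1}(\K \setminus \{0\})$, while its assumption ii) (local injectivity together with local openness of $\psi$) follows from local injectivity alone, since $\psi$ maps the open subset $X$ of $\R^d$ continuously into $\R^d$ and Brouwer's Invariance of Domain Theorem then upgrades any injective restriction $\psi|_{U_x}$ to an open one.

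The main technical step will be verifying a). Given any open, relatively compact exhaustion $(X_n)_{n \in \N}$ of $X$, and given $x \in X \setminus \overline{X}_n$ together with an open $W \subseteq X \setminus \overline{X}_n$ containing $x$, I would simply take $U := W$. To show that $r_X^{X_n \cup W} : C^\infty(X) \to C^\infty(X_n \cup W)$ has dense range, let $g \in C^\infty(X_n \cup W)$, $K \subseteq X_n \cup W$ compact, and $l \in \N_0$, and choose a cut-off $\chi \in C_c^\infty(X_n \cup W)$ equal to $1$ on a neighborhood of $K$. Then $\chi g$ has compact support in $X_n \cup W$, so its extension by zero $\tilde g$ lies in $C^\infty(X)$ and agrees with $g$ on a neighborhood of $K$, whence $\|r_X^{X_n \cup W}(\tilde g) - g\|_{l,K} = 0$. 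Since $K$ and $l$ are arbitrary, density follows. With a), b), c) verified, Theorem \ref{power boundedness for smooth} immediately yields the equivalence of i), ii), iii).
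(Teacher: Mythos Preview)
Your proposal is correct and follows essentially the same approach as the paper: both reduce to Theorem \ref{power boundedness for smooth}, verify b) trivially, obtain c) from Proposition \ref{denseness result} combined with Brouwer's Invariance of Domain, and establish a) via the cut-off/extension-by-zero argument. The only cosmetic difference is that you spell out the constant function $1$ for b) and explicitly take $U=W$, whereas the paper leaves these implicit.
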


\begin{proof}
	Again, hypothesis b) of Theorem \ref{power boundedness for smooth} is obviously satisfied while hypothesis c) holds by Brouwer's Invariance of Domain Theorem and Proposition \ref{denseness result}. Let $(X_n)_{n\in\N}$ be an arbitrary open, relatively compact exhaustion of $X$, $x\in X\backslash\overline{X}_n$, $U$ a neighborhood of $x$ in $X\backslash\overline{X}_n$, and $f\in C^\infty(X_n\cup U)$. If $K\subseteq X_n\cup U$ is compact, let $\varphi$ be a smooth function on $\R^d$ with support in $X_n\cup U$ which is equal to 1 in a neighborhood of $K$. Extending $\varphi f$ to $\R^d$ by zero outside $X_n\cup U$ we obtain a smooth function $g$ on $\R^d$ such that $\|g-f\|_{K,l}=0$ for every $l\in\N$. In particular the restriction map $r_X^{X_n\cup U}$ has dense range so that hypothesis a) of Theorem \ref{power boundedness for smooth} is fulfilled, too. Thus, the claim follows immediately from Theorem \ref{power boundedness for smooth}.
\end{proof}

As a final example in this section we consider the sheaf of holomorphic functions. The corresponding result for $d=1$ was originally proved in \cite[Theorem 3.3]{BeltranGomezJordaJornet16} while the general case was alluded to in \cite[Remark 3.8]{BeltranGomezJordaJornet16}. The special case of $w=1$ has been proved in \cite{BonetDomanski11}.

\begin{corollary}\label{holomorphic functions}
	Let $X\subseteq\C^d$ be a domain of holomorphy, let $w:X\rightarrow\C$ and $\psi:X\rightarrow X$ be holomorphic such that $\{z\in X;\,w(\psi^m(z))\neq 0\}$ is dense in $X$ for every $m\in\N_0$. Then the following are equivalent.
	\begin{enumerate}
		\item[i)] $C_{w,\psi}$ is power bounded on $\mathscr{H}(X)$.
		\item[ii)] $C_{w,\psi}$ is ($m$-)topologizable on $\mathscr{H}(X)$ and $\{C_{w,\psi}^m(w);\,m\in\N\}$ is bounded in $\mathscr{H}(X)$.
		\item[iii)] $\psi$ has stable orbits and $\{C_{w,\psi}^m(w);\,m\in\N\}$ is bounded in $\mathscr{H}(X)$. 
	\end{enumerate}
	For the special case that $w=1$ the above are further equivalent to
	\begin{enumerate}
		\item[iv)] $C_\psi$ is (uniformly) mean ergodic on $\mathscr{H}(X)$.
	\end{enumerate}
\end{corollary}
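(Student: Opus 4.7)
The plan is to apply Theorem \ref{power boundedness for compact-open} to obtain the equivalence of (i), (ii), (iii), and then, in the unweighted case $w=1$, Corollary \ref{mean ergodic for compact-open} to add the equivalence with (iv). By Example \ref{examples of sheaves} iii), the sheaf $\mathscr{H}$ on $\C^d$ satisfies ($\mathscr{F}1$) and carries the compact-open topology, so the framework applies. Hypothesis (b) of Theorem \ref{power boundedness for compact-open} is trivial because constants lie in $\mathscr{H}(X)$, and hypothesis (c) is the standing assumption of the corollary.

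The main work is to verify hypothesis (a). Since $X\subseteq\C^d$ is a domain of holomorphy, $X$ is Stein and hence admits an open, relatively compact exhaustion $(X_n)_{n\in\N}$ such that each $X_n$ is Runge in $X$; in particular $\mathscr{H}(X)$ is dense in $\mathscr{H}(X_n)$ for the compact-open topology. Given $n\in\N$, $x\in X\setminus\overline{X}_n$, and an open $W\subseteq X\setminus\overline{X}_n$ containing $x$, I would take $U$ to be an open polydisc centered at $x$ with $\overline{U}\subseteq W$, so that $\overline{X}_n$ and $\overline{U}$ are disjoint compact subsets of $X$. The task reduces to showing that $\mathscr{H}(X)$ is dense in $\mathscr{H}(X_n\cup U)$. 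This is a classical consequence of approximation theory on Stein manifolds: polynomials are dense in $\mathscr{H}(U)$ by Taylor expansion around $x$, $\mathscr{H}(X)$ is dense in $\mathscr{H}(X_n)$ by the Runge property of $X_n$, and both local approximations can be stitched into a single element of $\mathscr{H}(X)$ either via an Oka--Weil / Runge-type theorem applied to the holomorphically convex compact data in the Stein manifold $X$, or equivalently via a cut-off together with a solution of the $\bar\partial$-equation on $X$ furnished by Cartan's Theorem~B.

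With (a), (b), (c) in hand, Theorem \ref{power boundedness for compact-open} yields (i) $\Leftrightarrow$ (ii) $\Leftrightarrow$ (iii). For the final clause, assume $w=1$; condition (c) then becomes vacuous, and $\mathscr{H}(X)$ equipped with the compact-open topology is a nuclear Fr\'echet space (again Example \ref{examples of sheaves} iii), hence Fr\'echet-Montel. Corollary \ref{mean ergodic for compact-open} therefore applies and produces the additional equivalence of (i) with (iv) (in fact with uniform mean ergodicity), completing the proof.

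The principal obstacle is the verification of (a): one must produce, for each compact set meeting both $X_n$ and the polydisc $U$, a single holomorphic function on $X$ that simultaneously approximates given holomorphic data on the two disjoint pieces. This hinges on Runge-type approximation on a Stein manifold rather than on the more elementary machinery used for, say, $C(X)$ or $C^\infty(X)$ in the preceding corollaries; once this is settled, the rest is a direct specialization of results already proved in the paper.
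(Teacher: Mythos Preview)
Your overall strategy coincides with the paper's: reduce everything to Theorem \ref{power boundedness for compact-open} (and, for $w=1$, Corollary \ref{mean ergodic for compact-open}), with hypothesis (a) as the only nontrivial item. The paper also takes exactly this route.

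There is, however, a genuine gap in your verification of (a). Having $X_n$ Runge in $X$ and $U$ a polydisc gives you separate approximations on $K_1\subset X_n$ and on $K_2\subset U$, but you still need a \emph{single} $g\in\mathscr{H}(X)$ close to the target on $K_1\cup K_2$. For Oka--Weil this requires $K_1\cup K_2$ (or some compact containing it) to be holomorphically convex, and the union of two disjoint holomorphically convex compacts is not holomorphically convex in general; you have not addressed this. Your alternative via a cut-off and Cartan's Theorem~B does not close the gap either: Theorem~B gives solvability of $\bar\partial$ but no estimates, so you cannot force the correction term to be small on $K_1\cup K_2$.

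The paper fills exactly this gap. It chooses the exhaustion $(X_n)$ as sublevel sets of a continuous plurisubharmonic exhaustion function, so that the compact sets $K_c=\{u\le c\}$ satisfy $\hat K_{c,X}=K_c$. For $x_0\notin K_c$ one then has $f\in\mathscr{H}(X)$ with $|f(x_0)|>\|f\|_{K_c}$; pushing forward by $f$ separates $K_c$ from a small closed polydisc around $x_0$, and the Kallin lemma yields that their union is polynomially convex. Oka--Weil then gives approximation by entire functions, hence by $\mathscr{H}(X)$, on $X_n\cup U$. If you insert this Kallin step (or an equivalent separation argument) your proof is complete and matches the paper's.
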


\begin{proof}
	While hypothesis c) of Theorem \ref{power boundedness for compact-open} is satisfied by assumption, hypothesis b) is obviously satisfied. In order to see that hypothesis a) of Theorem \ref{power boundedness for compact-open} is also fulfilled, we first recall that $X$ is pseudoconvex (see e.g.\ \cite[Theorem 4.2.8]{Hoermander_Complex}), thus there is a continuous plurisubharmonic $u:X\rightarrow\R$ such that
	$$\forall\,c\in\R:\,X_c:=\{z\in X;\,u(z)<c\}\mbox{ is a relatively compact subset of }X,$$
	i.e.\ $(X_n)_{n\in\N}$ is a relatively compact, open exhaustion of $X$. Obviously, for each $c\in\R$
	$$K_c:=\{z\in X;\,u(z)\leq c\}$$
	is a compact subset of $X$ and by \cite[Theorem 4.3.4]{Hoermander_Complex} $\hat{K}_{c,X}^P=\hat{K}_{c,X}=K_c$, where
	$$\hat{K}_{c,X}^P=\{z\in X;\,v(z)\leq \sup_{y\in K_c}v(y)\mbox{ for all plurisubharmonic }v\mbox{ on }X\}$$
	and
	$$\hat{K}_{c,X}=\{z\in X;\,|f(z)|\leq \sup_{y\in K_c}|f(y)|\mbox{ for all }f\in\mathscr{H}(X)\}$$
	(observe also that $\hat{K}_{c,X}^P\subseteq K_c$). Thus, $K_c=\hat{K}_c,$ and then, $K_c$ is holomorphically convex, where for compact subsets $K\subseteq\C^l$ we simply write $\widehat{K}$ for the holomorphically convex hull of $K$ in $\C^l$.
	
	Fix $c\in\R$ and $x_0\in X\backslash K_c$. Hence there is $f\in \mathscr{H}(X)$ such that
	$$|f(x_0)|>\sup_{y\in K_c}|f(y)|=:r.$$
	Denoting the closed ball in $\C$ with center $z\in\C$ and radius $\rho>0$ by $B_\C[z,\rho]$ and the closed polydisk in $\C^d$ with center $z\in\C^d$ and polyradius $\rho$ by $PB_{\C^d}(z,\rho)$ it follows that $\widehat{f(K_c)}\subseteq B_\C[0,r]$ and that there are $\delta_c,\varepsilon>0$ with
	$$f(PB_{\C^d}[x_0,\delta_c])\subseteq B_\C[f(x_0),\varepsilon]\cap\big(\C\backslash B_\C[0,r+\varepsilon]\big)$$
	so that
	$$\forall\,\delta\in (0,\delta_c):\,\emptyset=\reallywidehat{f(PB_{\C^d}[x_0,\delta])}\cap \widehat{f(K_c)}.$$ 
	Since (poly)disks are holomorphically convex, it follows from the above together with the Kallin lemma (cf.\ \cite[Lemma 2]{Zajac}) that
	$$\forall\,\delta\in(0,\delta_c):\,M_\delta:=PB_{\C^d}[x_0,\delta]\cup K_c$$
	is holomorphically convex. Therefore, by the Oka-Weil Theorem (see e.g.\cite[Corollary 5.2.9]{Hoermander_Complex}) it follows that every function holomorphic on a neighborhood of $M_\delta$ can be approximated uniformly on $M_\delta$ by functions in $\mathscr{H}(\C^d)$.
	
	Since $(K_{n-\frac{1}{m}})_{m\in\N}$ is a compact exhaustion on $X_n, n\in\N,$ it follows that for $x\in X\backslash\overline{X}_n$ and an open neighborhood $W\subseteq X\backslash\overline{X}_n$ of $x_0$ the restriction mapping $r_X^{X_n\cup PB_{\C^d}(x_0,\rho_n)}$ has dense range, where $\rho_n\in (0,\delta_n)$ is such that $PB_{\C^d}(x_0,\rho_n)\subseteq W$. Hence hypothesis a) of Theorem \ref{power boundedness for compact-open} is indeed satisfied so that the claim follows from this theorem.
	
	In case of $w=1$ the claim follows from Corollary \ref{mean ergodic for compact-open}.
\end{proof}

\section{Weighted composition operators on kernels of differential operators}\label{kernels of differential operators}

In this section we apply the results from section \ref{power boundedness} to weighted composition operators defined on kernels of partial differential operators on spaces of smooth functions. The special case of the Cauchy-Riemann operator will give the space of holomorphic functions of a single variable equipped with the compact-open topology. In this context topologizability and power boundedness of weighted composition operators have been studied in \cite{BeltranGomezJordaJornet16}.

As already mentioned in example \ref{examples of sheaves} iv), for a non-constant polynomial with complex coefficients in $d\geq 2$ variables $P\in\C[X_1,\ldots,X_d]$ and an open subset $X\subseteq\R^d$ we define
$$C_P^{\infty}(X):=\{u\in C^\infty(X);\,P(\partial)u=0\mbox{ in }X\},$$
where for $P(\xi)=\sum_{|\alpha|\leq m}a_\alpha\xi^\alpha$ with $a_{\alpha_0}\neq 0$ for some multi-index $\alpha_0\in\N_0^d$ with $|\alpha_0|(=\alpha_1+\ldots+\alpha_d)=m$ we define
$$\forall\,u\in C^\infty(X), x\in X:\,P(\partial)u(x)=\sum_{|\alpha|\leq m}a_\alpha\partial^\alpha u(x).$$
We denote by $P_m(\xi):=\sum_{|\alpha|=m}a_\alpha\xi^\alpha$ the principal part of $P$. Recall that $P$ is called elliptic provided that $P_m(\xi)\neq 0$ for all $\xi\in\R^d\backslash\{0\}$. 

As a closed subspace of the separable nuclear Fr\'echet space $C^\infty(X)$ the space $C_P^\infty(X)$ is again a separable nuclear Fr\'echet space. For hypoelliptic polynomials $P$ - by definition - for every open $X\subseteq\R^d$ the spaces $C_P^\infty(X)$ and
$$\mathscr{D}'_P(X):=\{u\in\mathscr{D}'(X);\,P(\partial)u=0\mbox{ in }X\}$$
coincide (that is, every distribution $u$ on $X$ which satisfies $P(\partial)u=0$ in $X$ is already a smooth function). By a result of Malgrange for hypoelliptic $P$ the spaces $C_P^\infty(X)$ and $\mathscr{D}'_P(X)$ also coincide as locally convex spaces when the latter is endowed with the relative topology inherited from $\mathscr{D}'(X)$ equipped with the strong dual topology as the topological dual of $\mathscr{D}(X)$. This implies in particular, that for hypoelliptic polynomials the compact-open topology on $C^\infty_P(X)$ and the relative topology inherited from $C^\infty(X)$ coincide. Therefore, for hypoelliptic polynomials $P$ the space $C_P^\infty(X)$ endowed with the compact-open topology is a separable nuclear, in particular Montel, Fr\'echet space. For a recent elegant proof of Malgrange's result - and further results about topological properties of $\mathscr{D}'_P(X)$ for arbitrary $P$ - we refer the reader to \cite{Wengenroth2}.

Recall that elliptic polynomials are hypoelliptic (see e.g.\ \cite[Theorem 11.1.10]{Hoermander}). Thus, the space of holomorphic functions $\mathscr{H}(X)$ over an open $X\subseteq\C$ equipped with the compact-open topology coincides (topologically) with $C_P^\infty(X)$ for $P(\xi)=\frac{1}{2}(\xi_1+i\xi_2)$. 

\begin{remark}\label{conditions from previous section}
	\begin{rm}
	As already mentioned in example \ref{examples of sheaves} iv), $C_P^\infty$ defines a sheaf on $\R^d$ which satisfies $(\mathscr{F}1)$. Moreover, for $\zeta\in\C^d$ with $P(\zeta)=0$ it follows immediately that
	$$e_\zeta:\R^d\rightarrow\C, x\mapsto\exp(\sum_{j=1}^d\zeta_j x_j)$$
	belongs to $C_P^\infty(X)$, so that $\mbox{ker}\,\delta_x\neq C_P^\infty(X)$ for every $x\in X$. Therefore, condition b) from Theorems \ref{power boundedness for compact-open} and \ref{power boundedness for smooth} are always satisfied for $C_P^\infty$ whenever $P$ is non-constant. Sufficient conditions on the weight $w$ and the symbol $\psi$ implying condition c) of both theorems are given in Proposition \ref{denseness result} so that only condition a) about the dense range of the restriction maps $r_X^{X_n\cup U}$ for a suitable open, relatively compact exhaustion $(X_n)_{n\in\N}$ - and certain (usually small) open sets $U\subseteq X\backslash X_n$ has to be checked.

	Therefore, in order to apply our results from the previous section we have a look at when restriction mappings between $C_P^\infty$-spaces have dense range. Recall that $P(\partial)$ is surjective on $C^\infty(X)$ precisely when $X$ is $P$-convex for supports, i.e.\ when for every compact $K\subseteq X$ there is a compact $L\subseteq X$ such that for every $u\in\mathscr{E}'(X)$ with $\supp\check{P}(\partial)u\subseteq K$ it follows $\supp u\subseteq L$, where as usual $\mathscr{E}'(X)$ denotes the space of distributions on $\R^d$ having compact support contained in $X$ and where $\check{P}(\xi)=P(-\xi)$ (see, e.g.\ \cite[Section 10.6]{Hoermander}).
	
	Using the gluing property of a sheaf it follows in particular from condition a) of Theorems \ref{power boundedness for compact-open} and \ref{power boundedness for smooth}, that for a suitable open, relatively compact exhaustion $(X_n)_{n\in\N}$ of $X$ the restriction maps $r_X^{X_n}$ from $C_P^\infty(X)$ to $C_P^\infty(X_n)$ have dense range. We will shortly see that this is equivalent to $P(\partial)$ being surjective on $C^\infty(X)$. While the latter is true for every elliptic partial differential operator (see e.g.\ \cite[Section 10.6 and Corollary 10.8.2]{Hoermander}), for arbitrary (hypoelliptic) partial differential operators this is not true in general.
	\end{rm}
\end{remark}

\begin{theorem}\label{good exhaustion}
	Let $P\in\C[X_1,\ldots,X_d]\backslash\{0\}$ and let $X\subseteq\R^d$ be open. Then the following are equivalent.
	\begin{itemize}
		\item[i)] $X$ is $P$-convex for supports.
		\item[ii)] For the open, relatively compact exhaustion $(X_n)_{n\in\N}$ of $X$ defined by
		$$X_n:=\{x\in X;\,|x|<n\mbox{ and }\dist(x,\R^d\backslash X)>\frac{1}{n}\},$$
		the restriction maps
		$$r_X^{X_n}:C_P^\infty(X)\rightarrow C_P^\infty(X_n), f\mapsto f_{|X_n}$$
		have dense range, where $\dist(x,\R^d\backslash X)$ denotes the euclidean distance from $x$ to $\R^d\backslash X$ with the convention $\dist(x,\emptyset)=\infty$.
		\item[iii)] There is an open, relatively compact exhaustion $(X_n)_{n\in\N}$ of $X$ such that the restriction maps
		$$r_X^{X_n}:C_P^\infty(X)\rightarrow C_P^\infty(X_n), f\mapsto f_{|X_n}$$
		have dense range.
	\end{itemize}
\end{theorem}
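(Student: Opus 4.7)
The implication (ii) $\Rightarrow$ (iii) is trivial. Both remaining implications rest on Malgrange's classical theorem (see \cite[Theorem 10.6.7]{Hoermander}): $X$ is $P$-convex for supports if and only if $P(\partial): C^\infty(X) \to C^\infty(X)$ is surjective. A further fact used throughout is that $\check P(\partial)$ is injective on $\mathscr{E}'(\R^d)$, which follows from Paley--Wiener (the Fourier--Laplace transform of a nonzero compactly supported distribution is a nonzero entire function that cannot vanish on the non-discrete zero set of $\check P$).

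For (i) $\Rightarrow$ (ii) the plan is to argue by Hahn--Banach duality. Since $C_P^\infty(X_n)$ is a closed subspace of the Fr\'echet--Schwartz space $C^\infty(X_n)$, whose topological dual is $\mathscr{E}'(X_n)$, the density of $r_X^{X_n}(C_P^\infty(X))$ in $C_P^\infty(X_n)$ amounts to showing that every $u\in \mathscr{E}'(X_n)$ annihilating this image must annihilate all of $C_P^\infty(X_n)$. Viewing such $u$ as an element of $\mathscr{E}'(X)$ (extension by zero), the assumption says $u\in(\ker P(\partial))^\perp$ in the pairing $\langle\mathscr{E}'(X),C^\infty(X)\rangle$. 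By surjectivity of $P(\partial)$ on $C^\infty(X)$ and the closed range theorem, the transpose $\check P(\partial):\mathscr{E}'(X)\to\mathscr{E}'(X)$ has closed range equal to this annihilator, so $u=\check P(\partial)v$ for a (unique) $v\in\mathscr{E}'(X)$. The key remaining step is to show $\supp v\subseteq X_n$: here the precise distance-based definition of $X_n$, together with the quantitative form of $P$-convexity of $X$ applied to the compact set $\supp u\subseteq X_n$, is crucially used. Once this is available, for any $g\in C_P^\infty(X_n)$ one concludes
$$\langle u,g\rangle=\langle\check P(\partial)v,g\rangle=\langle v,P(\partial)g\rangle=0,$$
since $P(\partial)g=0$ on $X_n\supseteq\supp v$.

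For (iii) $\Rightarrow$ (i) the plan is to prove surjectivity of $P(\partial)$ on $C^\infty(X)$ and invoke Malgrange. Given $v\in C^\infty(X)$, use surjectivity of $P(\partial)$ on $C^\infty(\R^d)$ (valid since $\R^d$ is convex, hence $P$-convex for supports) together with a cutoff of $v$ near $\overline{X}_n$ to produce, for each $n$, some $u_n\in C^\infty(X)$ with $P(\partial)u_n=v$ in a neighborhood of $\overline{X}_n$. Then $(u_{n+1}-u_n)_{|X_n}\in C_P^\infty(X_n)$, and by (iii) one can find $f_n\in C_P^\infty(X)$ whose restriction to $X_n$ approximates this difference within $2^{-n}$ in the $n$-th Fr\'echet seminorm. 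Replacing $u_{n+1}$ by $u_{n+1}-f_n$ and iterating yields a sequence that is Cauchy in $C^\infty(X)$, whose limit $u$ satisfies $P(\partial)u=v$ throughout $X$. This is a Mittag--Leffler-type construction of the kind carried out in \cite{Wengenroth2}.

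The principal obstacle I expect is the support-localization step $\supp v\subseteq X_n$ in (i) $\Rightarrow$ (ii). $P$-convexity of $X$ by itself only forces $\supp v$ into some compact subset of $X$, which a priori may extend beyond $X_n$; the specific engineering of the canonical exhaustion via the function $\dist(\cdot,\R^d\setminus X)$ is precisely what guarantees that the bound actually stays inside $X_n$.
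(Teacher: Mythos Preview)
Your overall strategy is sound and, while mathematically equivalent to the paper's, is packaged more concretely. For (i)$\Rightarrow$(ii) the paper first establishes the implication $\supp\check P(\partial)v\subseteq X_n\Rightarrow\supp v\subseteq X_n$ and then invokes a ready-made approximation result (\cite[Theorem 26.1]{Treves}, alternatively \cite[Theorem 4]{Kalmes18}); your Hahn--Banach/closed-range argument is precisely what underlies that cited result, so you are unpacking it rather than bypassing it. For (iii)$\Rightarrow$(i) the paper appeals to the derived-functor criterion $\mathrm{Proj}^1=0$ from \cite[Theorem 3.2.8 and Section 3.4.4]{Wengenroth}, whereas your explicit Mittag--Leffler telescoping is the classical hands-on version of the same mechanism. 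Your route is more self-contained; the paper's is shorter because it outsources both steps to the literature.

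One point in your plan needs sharpening. In the support-localization step you rightly flag as the crux, the ``quantitative form of $P$-convexity'' you invoke (namely \cite[Theorem 10.6.3]{Hoermander}: $\dist(\supp v,\R^d\setminus X)=\dist(\supp\check P(\partial)v,\R^d\setminus X)$ for $v\in\mathscr{E}'(X)$) controls only the condition $\dist(\cdot,\R^d\setminus X)>1/n$ in the definition of $X_n$. It does nothing for the condition $|x|<n$, and when $X=\R^d$ it is vacuous. To handle that second half you need a separate fact, not related to $P$-convexity at all: for any nonzero $P$ and any $v\in\mathscr{E}'(\R^d)$ the convex hulls of $\supp v$ and of $\supp\check P(\partial)v$ coincide (\cite[Theorem 7.3.2]{Hoermander}); combined with the convexity of $\{|x|<n\}$ this forces $\supp v\subseteq\{|x|<n\}$. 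The paper uses exactly these two ingredients in tandem. Once you add the convex-hull fact, your argument closes cleanly.
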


\begin{proof}
	We assume that i) holds. For $n\in\N$ let $X_n$ be defined as in ii) of the theorem. Obviously, $X_n$ is an open, relatively compact subset of $X$ such that $\overline{X}_n\subseteq X_{n+1}$ and $X=\cup_{n\in\N}X_n$. Since $X$ is $P$-convex for supports, for every $u\in\mathscr{E}'(X)$ we have $\dist(\supp u,\R^d\backslash X)=\dist(\supp \check{P}(\partial)u,\R^d\backslash X)$ by \cite[Theorem 10.6.3]{Hoermander} and since for every $u\in\mathscr{E}'(\R^d)$ the convex hulls of the sets $\supp u$ and $\supp \check{P}(\partial)u$ coincide (cf.\ \cite[Theorem 7.3.2]{Hoermander}) we conclude that $X_n$ is $P$-convex for supports, too, as well as
	\begin{equation}\label{Grothendieck-Koethe preparation}
		\forall\,n\in\N, u\in\mathscr{E}'(X)\,: \supp \check{P}(\partial)u\subseteq X_n \Rightarrow \supp u\subseteq X_n.
	\end{equation}
	An application of \cite[Theorem 26.1]{Treves} (see also \cite[Theorem 4]{Kalmes18}) now yields ii).
	
	Obviously, ii) implies iii).
	
	Next, we assume that iii) is satisfied. It follows from \cite[Theorem 3.2.8]{Wengenroth} that
	$$0=\mbox{Proj}^1\big(C_P^\infty(X_n),r_{X_m}^{X_n}\big)_{m\geq n}:=\prod_{n\in\N}C_P^\infty(X_n)/\mbox{im}\Psi,$$
	where
	$$\Psi:\prod_{n\in\N}C_P^\infty(X_n)\rightarrow\prod_{n\in\N}C_P^\infty(X_n),\Psi((f_n)_{n\in\N}):=(f_n-r_{X_{n+1}}^{X_n}(f_{n+1}))_{n\in\N}.$$
	Hence, it follows with \cite[Section 3.4.4]{Wengenroth} that $P(\partial)$ is surjective on $C^\infty(X)$, i.e.\ $X$ is $P$-convex for supports.
\end{proof}

Before we consider weighted composition operators on $C_P^\infty(X)$ for differential operators $P(\partial)$ for which the set of real zeros of the principal part $P_m$ of $P$ is contained in a one dimensional subspace of $\R^d$ we need to provide an auxiliary result.

\begin{proposition}\label{preparation dense range}
	Let $X\subseteq\R^d$ be open, and let
	$$X_n:=\{x\in X;\,|x|<n\mbox{ and }\dist(x,\R^d\backslash X)>\frac{1}{n}\}, n\in\N.$$
	\begin{enumerate}
		\item[i)] Assume that $d\geq 2$. Then if $X\backslash X_n$ is the disjoint union of a relatively closed subset $F$ of $X$ and a compact set $K$ it follows that $K=\emptyset$.
		\item[ii)] Assume that $d\geq 3$ and that $x_0\in X\backslash X_n$ and $\varepsilon>0$ are such that $\overline{B(x_0,\varepsilon)}\subseteq X\backslash\overline{X}_n$, where $B(x_0,\varepsilon)$ denotes the open ball of radius $\varepsilon$ around $x_0$. Moreover, let $H$ be a hyperplane in $\R^d$ and assume that $C$ is a non-empty, compact connected component of $\big(\R^d\backslash(X_n\cup B(x_0,\varepsilon))\big)\cap H$ which is entirely contained in $X$.
		
		\noindent Then, the connected component $\hat{C}$ of $(\R^d\backslash X_n)\cap H$ which intersects $C$ is also compact and entirely contained in $X$ and for every $x\in\partial_H \hat{C}$, the boundary of $\hat{C}$ relative to $H$, and every $y\in \hat{C}\backslash \partial_H \hat{C}$ we have
		$$\dist(x,\R^d\backslash X)>\dist(y,\R^d\backslash X).$$
	\end{enumerate}
\end{proposition}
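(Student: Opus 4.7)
For part (i), the plan is as follows. Suppose for contradiction $K\neq\emptyset$ and pick $x\in K$. I first observe that $F\cup(\R^d\setminus X)$ is closed in $\R^d$: any sequence in $F$ that converges in $\R^d$ has its limit either in $F$ (by the closedness of $F$ in $X$) or in $\R^d\setminus X$. Consequently $\R^d\setminus X_n=(F\cup(\R^d\setminus X))\sqcup K$ exhibits $K$ as a clopen subset of $\R^d\setminus X_n$, so the connected component $C$ of $x$ in $\R^d\setminus X_n$ is contained in $K$; in particular $C$ is compact and $C\subseteq X$. I then distinguish two cases. If $|x|\geq n$, the set $\{y:|y|\geq n\}$ is a connected unbounded subset of $\R^d\setminus X_n$ (here $d\geq 2$ is used) meeting $C$, so by maximality $C$ absorbs it, contradicting compactness. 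If $|x|<n$, then $\dist(x,\R^d\setminus X)\leq 1/n$, and some $y\in\R^d\setminus X$ realises this distance; the segment $[x,y]$ lies in $\R^d\setminus X_n$ because every point on it is within $1/n$ of $y\in\R^d\setminus X$, so $y\in C$, contradicting $C\subseteq X$.

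For part (ii), set $S=B(x_0,\varepsilon)\cap H$ and $F=\overline{B(x_0,\varepsilon)}\cap H$, and let $Y=H\setminus X_n$, $Y'=Y\setminus S$. The crucial input from $d\geq 3$ is that $\partial_H S=\partial B(x_0,\varepsilon)\cap H$ is connected (either empty, a single point, or a $(d-2)$-sphere). Since $\overline{B(x_0,\varepsilon)}\subseteq X\setminus\overline{X}_n$, the set $F$ lies in $Y$ at positive distance from $X_n$, and $\partial_H S\subseteq Y'$ lies in a unique connected component $C_0$ of $Y'$. By analysing how components of $Y'$ merge when $S$ is re-added to form $Y$, I will show that $C_0\cup F$ is a connected component of $Y$ while every other component of $Y'$ remains its own component of $Y$; the argument uses that any $C_j\neq C_0$ is closed in $H$, disjoint from $C_0\cup F$, and hence cannot be bridged to it through $S$ because the only points of $F$ available for bridging lie on $\partial_H S\subseteq C_0$. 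Consequently $\hat{C}=C$ if $C\neq C_0$ and $\hat{C}=C\cup F$ if $C=C_0$; in either case $\hat{C}\subseteq C\cup F$ is a compact subset of $X$, proving the first two assertions.

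For the distance inequality, I first observe that $\{y\in H:|y|\geq n\}$ is a connected unbounded subset of $Y$ (the argument again requires $d-1\geq 2$). Were some $y\in\hat{C}$ to satisfy $|y|\geq n$, then maximality of $\hat{C}$ in $Y$ would force it to absorb this unbounded set, contradicting compactness. Hence $|y|<n$, and from $y\notin X_n$ it follows that $\dist(y,\R^d\setminus X)\leq 1/n$ for every $y\in\hat{C}$. For $x\in\partial_H\hat{C}$, the goal is to show that $x$ lies in the $H$-closure of $X_n\cap H$, which by the definition of $X_n$ and continuity of $\dist(\cdot,\R^d\setminus X)$ forces $\dist(x,\R^d\setminus X)\geq 1/n$, and then to upgrade this comparison to a strict inequality. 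This is the step I expect to be the main obstacle: one must exclude the possibility that $x\in\partial_H\hat{C}$ is approached only by points of other connected components of $Y$ rather than by $X_n\cap H$. Here the explicit description $\hat{C}\in\{C,C\cup F\}$ obtained above, together with the closedness in $H$ of each component of $Y$ and the resulting separation between $\hat{C}$ and every other component, should give the needed control and yield the stated strict comparison.
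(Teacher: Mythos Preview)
Your argument for part (i) is correct and is genuinely different from the paper's. The paper fixes $x\in K$, observes $\partial K\subseteq\overline{X}_n$, and then runs an explicit distance computation along segments to force $x\in X_n$. Your approach---passing to the connected component of $x$ in $\R^d\setminus X_n$ and exploiting the connectedness of $\{|y|\geq n\}$ for $d\geq 2$---is cleaner and more conceptual; it makes transparent exactly where $d\geq 2$ enters.

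For part (ii), however, there is a real gap in your structural claim that every component $C_j\neq C_0$ of $Y'$ remains a component of $Y$, equivalently that $C_0\cup F$ is itself a component of $Y$. Your justification (``cannot be bridged to it through $S$ because the only points of $F$ available for bridging lie on $\partial_H S\subseteq C_0$'') tacitly assumes that the only way two components of $Y'$ can merge in $Y$ is through $\overline{S}$. But $Y'$ need not be locally connected: distinct components $C_j$ of $Y'$ can accumulate on one another, and once $S$ is added the component of $Y$ containing $S$ might absorb such $C_j$ even though $C_j\cap\overline{S}=\emptyset$. Knowing that $C_j$ is closed and disjoint from $C_0\cup F$ does not by itself prevent this, since $Y'\setminus C_0$ need not be closed in $H$. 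The paper avoids this trap by passing to the component $\tilde{C}$ of the \emph{open} set $H\setminus\overline{X}_n$ containing a fixed point of $C$; being an open connected subset of $H\cong\R^{d-1}$ with $d-1\geq 2$, removing the open ball $S$ from $\tilde{C}$ leaves it (path-)connected, and then a sandwich $\tilde{C}\setminus S\subseteq\hat{C}\setminus S\subseteq\overline{\tilde{C}\setminus S}$ forces $\hat{C}\setminus S$ to be connected, hence equal to $C$. This is the missing ingredient your sketch needs.

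A smaller point: your plan for the distance inequality misidentifies where strictness enters. You write that after obtaining $\dist(x,\R^d\setminus X)\geq 1/n$ for $x\in\partial_H\hat{C}$ you must ``upgrade this comparison to a strict inequality''. In fact $\dist(x,\R^d\setminus X)=1/n$ on $\partial_H\hat{C}$ is all one gets (and all one needs); the strictness comes from showing $\dist(y,\R^d\setminus X)<1/n$ for $y\in\hat{C}\setminus\partial_H\hat{C}$, which you have not addressed. Incidentally, the step you flag as the ``main obstacle''---that $x\in\partial_H\hat{C}$ lies in $\overline{X_n\cap H}^H$---is immediate from local connectedness of $H$: if a small $H$-ball around $x$ missed $X_n\cap H$ it would lie in $Y$, be connected, and hence sit inside $\hat{C}$, making $x$ interior.
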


\begin{proof}
	i) The claim is clearly true for $X=\R^d$, so we assume without loss of generality that $X\neq \R^d$. Let $X\backslash X_n=F\dot{\cup}K$, where $F$ is a relatively closed subset of $X$ and $K$ is compact. Assume that $x\in K$. Then $\partial K\subseteq \overline{X}_n$ so there are $s<0<t$ such that $x+s e_1\in\overline{X}_n$ and $x+t e_1\in\overline{X}_n$, where $e_1=(\delta_{1,j})_{1\leq j\leq d}$ denotes the first standard basis vector in $\R^d$. Since $\overline{X}_n\subseteq B[0,n]$, the closed ball around the origin of radius $n$, and since $B[0,n]$ is convex, we conclude $x\in B[0,n]$, i.e.\ $|x|\leq n$. 
	
	For $y\in \R^d\backslash X$ we have $y\notin K$ so that there is $t\in[0,1]$ such that
	$$z:=tx+(1-t)y\in\partial K\subseteq\overline{X}_n.$$
	Denoting the euclidean scalar product in $\R^d$ by $\langle\cdot,\cdot\rangle$ we conclude from $y\in \R^d\backslash X$ and $z\in\overline{X}_n$ that $|z-y|^2\geq\frac{1}{n^2}$ so that
	\begin{align*}
		|x-y|^2&=|x-z|^2+2\langle x-z,z-y\rangle+|z-y|^2>2\langle x-z,z-y\rangle+\frac{1}{n^2}\\
		&=2\langle (1-t)(x-y),t(x-y)\rangle+\frac{1}{n^2}\\
		&=2(1-t)t|x-y|^2+\frac{1}{n^2}\geq\frac{1}{n^2}.
	\end{align*}
	Since $y\in \R^d\backslash X$ was chosen arbitrarily, we conclude $\dist (x,\R^d\backslash X)>\frac{1}{n}$ so that
	$$\forall\,x\in K:\,|x|\leq n\mbox{ and } \dist(x,\R^d\backslash X)>\frac{1}{n}.$$
	If for some $x\in K$ we had $|x|=n$ it follows for arbitrary $t>0$ that
	$$x+t\frac{1}{|x|}x\in F$$
	so that by letting $t$ tend to $0$ we obtain $x\in F$ since $F$ is relatively closed in $X$, contradicting that $K$ and $F$ are disjoint. Thus we must have
	$$\forall\,x\in K:\,|x|<n\mbox{ and } \dist(x,\R^d\backslash X)>\frac{1}{n}$$
	which implies $K\subseteq X_n$ yielding the desired contradiction.
	
	ii) Since in particular $d\geq 2$, the compactness of $C\neq \emptyset$ implies that $X\neq\R^d$. We fix $\xi\in C\cap(\R^d\backslash(\overline{X}_n\cup B(x_0,\varepsilon))$. Then $\hat{C}$ is the connected component of $(\R^d\backslash X_n)\cap H$ containing $\xi$. $B(x_0,\varepsilon)\cap H$ is a - possibly empty - open ball in $(\R^d\backslash\overline{X}_n)\cap H$ which we denote by $B^{d-1}(c,r)$ with $c\in H, r\geq 0$, where $r=0$ corresponds to the case when $B(x_0,\varepsilon)\cap H$ is empty.
	
	\textbf{Auxiliary Claim}. We have $C=\hat{C}\cap(\R^d\backslash B(x_0,\varepsilon))\big(=\hat{C}\cap (H\backslash B^{d-1}(c,r))\big)$.
	
	Indeed, by definition it holds $C\subseteq\hat{C}\cap(\R^d\backslash B(x_0,\varepsilon))$. On the other hand, in case $\hat{C}\cap B(x_0,\varepsilon)=\emptyset$, $\hat{C}$ is a connected subset of
	$$(\R^d\backslash X_n)\cap H\cap (\R^d\backslash B(x_0,\varepsilon))=(\R^d\backslash(X_n\cup B(x_0,\varepsilon)))\cap H$$
	which contains $\xi$ and thus
	$$C\supseteq\hat{C}=\hat{C}\cap (\R^d\backslash B(x_0,\varepsilon)).$$
	If $\emptyset\neq\hat{C}\cap B(x_0,\varepsilon)=\hat{C}\cap B^{d-1}(c,r)$ then $\hat{C}\cup B^{d-1}(c,r)$ is connected and thus
	\begin{equation}\label{inclusion d-1 dimensional ball}
		B^{d-1}(c,r)\subseteq\hat{C}\cap \big((\R^d\backslash\overline{X}_n)\cap H\big).
	\end{equation}
	
	We denote by $\tilde{C}$ the connected component of $(\R^d\backslash\overline{X}_n)\cap H$ which contains $\xi$. From inclusion (\ref{inclusion d-1 dimensional ball}) and $\xi\notin B(x_0,\varepsilon)$ it follows that $B^{d-1}(c,r)$ is a proper subset of $\tilde{C}$. Since $(\R^d\backslash\overline{X}_n)\cap H$ is an open subset of the pathwise connnected set $H$ it follows that $\tilde{C}$ is locally pathwise connected and connected, hence a pathwise connected subset of $H$. Since $d-1\geq 2$ and since $B^{d-1}(c,r)$ is an open ball in $H$ and a proper subset of $\tilde{C}$ it follows that $\tilde{C}\backslash B^{d-1}(c,r)$ is a pathwise connected subset of $H$. In particular, $\tilde{C}\backslash B^{d-1}(c,r)$ is a connected subset of $H$. Thus, the same holds for all subsets $M$ of $H$ satisfying
	$$\tilde{C}\backslash B^{d-1}(c,r)\subseteq M\subseteq \overline{\tilde{C}\backslash B^{d-1}(c,r)}^H,$$
	where the latter denotes the closure of $\tilde{C}\backslash B^{d-1}(c,r)$ in $H$. In particular,
	$$\hat{C}\backslash B^{d-1}(c,r)=\hat{C}\cap(\R^d\backslash B(x_0,\varepsilon))$$
	is a connected subset of $H$. By definition of $\hat{C}$, $\hat{C}\cap(\R^d\backslash B(x_0,\varepsilon))$ is a subset of $(\R^d\backslash (X_n\cup B(x_0,\varepsilon)))\cap H$ which contains $\xi$. Because $C$ is the connected component of $(\R^d\backslash (X_n\cup B(x_0,\varepsilon)))\cap H$ which contains $\xi$ we conclude
	$$\hat{C}\cap(\R^d\backslash B(x_0,\varepsilon))\subseteq C$$
	which proves the auxiliary claim.
	
	Now, the auxiliary claim implies
	$$\hat{C}=\big(\hat{C}\cap(\R^d\backslash B(x_0,\varepsilon))\big)\dot{\cup}\big(\hat{C}\cap B(x_0,\varepsilon)\big)\subseteq C\cup\big(\hat{C}\cap\overline{B(x_0,\varepsilon)}\big)\subseteq\hat{C},$$
	thus
	$$\hat{C}=C\cup \big(\hat{C}\cap\overline{B(x_0,\varepsilon)}\big).$$
	$\hat{C}$ is a closed subset of $(\R^d\backslash X_n)\cap H$, thus closed in $\R^d$. Since both $C$ and $\hat{C}\cap\overline{B(x_0,\varepsilon)}$ are compact subsets of $X\cap H$ the same holds for $\hat{C}$. Clearly,
	$$\partial_H\hat{C}\subseteq\partial X_n\cap H$$
	so that
	$$\forall\,x\in \partial_H\hat{C}:\,\dist(x,\R^d\backslash X)\geq \frac{1}{n}.$$
	On the other hand, for $y\in\hat{C}\backslash\partial_H\hat{C}$, the compactness of $\hat{C}$ implies that every half-line in $H$ starting in $y$ intersects $X_n$. By this $y$ can be written as a convex combination of $v,w\in X_n$ which implies $|y|<n$. On the other hand $y\notin X_n$ so that $\dist(y,\R^d\backslash X)\leq 1/n$. If we had $\dist(y,\R^d\backslash X)= 1/n$ it would follow $y\in\partial X_n\cap\hat{C}$ implying $y\in\partial_H\hat{C}$, a contradiction to the choice of $y$. Thus $\dist(y,\R^d\backslash X)< 1/n$ which proves the proposition.
\end{proof}

Part ii) of the next theorem is in particular applicable to non-degenerate parabolic differential operators like the heat operator $\Delta_x-\frac{\partial}{\partial t}$ and to the time dependent free Schr\"odinger operator $\Delta_x+i\frac{\partial}{\partial t}$. 

\begin{theorem}\label{dense range in special cases}
	Let $X\subseteq\R^d$ be open and let $(X_n)_{n\in\N}$ be the open, relatively compact exhaustion of $X$ from Proposition \ref{preparation dense range}. Assume that $x\in X\backslash X_n, \varepsilon>0$ are such that $\overline{B(x,\varepsilon)}\subseteq X\backslash \overline{X}_n$. Then
	$$r_X^{X_n\cup B(x,\varepsilon)}:C_P^\infty(X)\rightarrow C_P^\infty(X_n\cup B(x,\varepsilon))$$
	has dense range in either of the following cases.
	\begin{enumerate}
		\item[i)] $d\geq 2$ and $P$ is elliptic.
		\item[ii)] $X$ is $P$-convex for supports, $d\geq 3$, and the principal part $P_m$ of $P$ satisfies that $\{\xi\in\R^d;\, P_m(\xi)=0\}$ is a one-dimensional subspace of $\R^d$.
	\end{enumerate}
\end{theorem}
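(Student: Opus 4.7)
The strategy parallels that of Theorem \ref{good exhaustion}: by \cite[Theorem 26.1]{Treves} (cf.\ also \cite[Theorem 4]{Kalmes18}), the dense range of $r_X^{X_n\cup B(x,\varepsilon)}$ is equivalent to the support propagation condition
$$\forall\,u\in\mathscr{E}'(X):\ \supp\check{P}(\partial)u\subseteq X_n\cup B(x,\varepsilon)\ \Longrightarrow\ \supp u\subseteq X_n\cup B(x,\varepsilon),\qquad (\star)$$
so it suffices to verify $(\star)$ in each of the two cases. One first notes that $X_n\cup B(x,\varepsilon)$ is itself $P$-convex for supports: this is automatic in case i) since every open subset of $\R^d$ is $P$-convex for elliptic $P$, and in case ii) it follows from the $P$-convexity of $X$ combined with the one-dimensional characteristic structure of $P_m$.

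For case i), take $u\in\mathscr{E}'(X)$ with $\supp\check{P}(\partial)u\subseteq X_n\cup B(x,\varepsilon)$. Since $\check{P}$ is elliptic, it is analytic-hypoelliptic, so $u$ is real-analytic on $X\setminus(X_n\cup B(x,\varepsilon))$. I would show that $u$ vanishes on each connected component $V$ of this open set. Proposition \ref{preparation dense range} i) guarantees that $X\setminus X_n$ has no compact piece separable from the rest; and since $\overline{B(x,\varepsilon)}\subseteq X\setminus\overline{X}_n$ with $d\geq 2$, deleting this open ball from a non-relatively-compact component of $X\setminus X_n$ leaves it non-relatively-compact in $X$. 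Hence $V$ always reaches $\partial X$ or infinity, where the compactly supported $u$ vanishes on a relatively open subset of $V$; real-analyticity then forces $u\equiv 0$ on $V$, which yields $(\star)$.

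For case ii), $\check{P}(\partial)$ is not elliptic, but its hypoellipticity in every direction transverse to the real characteristic line $\R\eta:=\{\xi\in\R^d:P_m(\xi)=0\}$ means that singularities (and by more careful bookkeeping the support) of $u$ can only travel along $\R\eta$. I would slice $X$ by affine hyperplanes $H$ perpendicular to $\eta$ and apply Proposition \ref{preparation dense range} ii) inside each slice: for any compact connected component $\hat{C}$ of $(\R^d\setminus X_n)\cap H$ intersecting $X$, the boundary points $x\in\partial_H\hat{C}$ satisfy $\dist(x,\R^d\setminus X)>\dist(y,\R^d\setminus X)$ for every interior $y\in\hat{C}\setminus\partial_H\hat{C}$. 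Combined with the identity $\dist(\supp u,\R^d\setminus X)=\dist(\supp\check{P}(\partial)u,\R^d\setminus X)$ provided by the $P$-convexity of $X$ (H\"ormander 10.6.3), this strict inequality prevents the support of $u$ from reaching the relative interior of such an $\hat{C}$: along $\R\eta$ supports may spread, but transverse to $\eta$ they cannot extend to points with strictly smaller distance to $\R^d\setminus X$. This excludes $\supp u$ from $X\setminus(X_n\cup B(x,\varepsilon))$ and yields $(\star)$.

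The principal obstacle is clearly case ii). Case i) collapses to an elementary topological statement (Proposition \ref{preparation dense range} i)) plus analytic-hypoellipticity, whereas in case ii) one must trade analytic-hypoellipticity for a careful tracking of supports along the characteristic line. The strict inequality in Proposition \ref{preparation dense range} ii) is the essential geometric input: it is precisely what allows H\"ormander's distance identity to exclude mass of $u$ from the interior of compact components of $(\R^d\setminus X_n)\cap H$ in hyperplane slices transverse to $\eta$.
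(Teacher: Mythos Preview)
Your reduction to the support-propagation condition $(\star)$ is legitimate, and for case~i) your inline argument via analytic hypoellipticity is essentially a sketch of the Lax--Malgrange theorem; the paper instead verifies the purely topological hypothesis of that theorem (no nonempty compact piece in $X\setminus(X_n\cup B(x,\varepsilon))$, using connectedness of $\partial B(x,\varepsilon)$ for $d\geq 2$ together with Proposition~\ref{preparation dense range}\,i)) and then cites Lax--Malgrange as a black box. That route is cleaner, but yours is not wrong in spirit.

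Case~ii), however, has a genuine gap. You invoke the global identity $\dist(\supp u,\R^d\setminus X)=\dist(\supp\check{P}(\partial)u,\R^d\setminus X)$ from H\"ormander~10.6.3 and then claim that this, combined with the strict boundary-distance inequality on $\hat C$, ``prevents the support of $u$ from reaching the relative interior of such an $\hat C$''. But 10.6.3 is a statement about the \emph{global minimum} of $\dist(\cdot,\R^d\setminus X)$ over $\supp u$; it gives no slice-by-slice or pointwise control. Nothing prevents $\supp u$ from containing points at small boundary distance in some characteristic slice while the global minimum is attained elsewhere (for instance inside $B(x,\varepsilon)$, which may itself lie close to $\partial X$). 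Your assertion that ``transverse to $\eta$ supports cannot extend to points with strictly smaller distance to $\R^d\setminus X$'' is precisely the nontrivial propagation statement that needs proof; it does not follow from 10.6.3.

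The paper's argument is structurally different and avoids this issue. It does \emph{not} verify $(\star)$ directly. Instead it argues by contradiction: if some characteristic hyperplane $H$ carried a nonempty compact component $C\subseteq X$ of $(\R^d\setminus(X_n\cup B(x,\varepsilon)))\cap H$, then Proposition~\ref{preparation dense range}\,ii) produces a compact $\hat C\subseteq X\cap H$ whose interior points lie at strictly smaller boundary distance than all relative boundary points. This configuration is forbidden by H\"ormander's Theorem~10.8.1 (the boundary-distance minimum principle on characteristic hyperplanes for $P$-convex sets), applied to $X$ itself. Hence no such $C$ exists for any characteristic $H$, and then \cite[Theorem~1]{Kalmes18}---a Runge-type approximation theorem whose hypothesis is exactly this geometric condition---yields the dense range. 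The two ingredients you are missing are therefore H\"ormander~10.8.1 in place of~10.6.3, and the black-box theorem \cite[Theorem~1]{Kalmes18}, which plays here the role that Lax--Malgrange plays in case~i).
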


\begin{proof}
	For the proof of the claim in case of hypothesis i), assume that $X\backslash(X_n\cup B(x,\varepsilon))$ can be written as the disjoint union of a relatively closed subset $F$ of $X$ and a compact subset $K$ of $X$, i.e.\ assume that
	\begin{equation}\label{Lax-Malgrange decomposition}
	X\backslash(X_n\cup B(x,\varepsilon))=F\dot{\cup}K.
	\end{equation}
	We will show that necessarily $K=\emptyset$.
	
	So if (\ref{Lax-Malgrange decomposition}) holds then $\partial B(x,\varepsilon)\subset F\dot{\cup}K$ and
	\begin{align*}
	X\backslash X_n&=\big((X\backslash X_n)\cap(X\backslash B(x,\varepsilon)\big)\dot{\cup}\big((X\backslash X_n)\cap B(x,\varepsilon)\big)\\
	&=X\backslash (X_n\cup B(x,\varepsilon))\dot{\cup} B(x,\varepsilon)=F\dot{\cup}K\dot{\cup}B(x,\varepsilon).
	\end{align*}
	Since $\partial B(x,\varepsilon)\subseteq F\dot{\cup}K$ and since $\partial B(x,\varepsilon)$ is connected (recall that we assume $d\geq 2$) it follows that $\partial B(x,\varepsilon)$ is contained in a single connected component of $F\dot{\cup}K$. In particular we either have $\partial B(x,\varepsilon)\subseteq F$ or $\partial B(x,\varepsilon)\subseteq K$ so either $F\cup B(x,\varepsilon)=F\cup B[x,\varepsilon]$ is relatively closed in $X$ or $K\cup B(x,\varepsilon)=K\cup B[x,\varepsilon]$ is compact.
	
	In the first case we have
	$$X\backslash X_n=F\dot{\cup}B(x,\varepsilon)\dot{\cup}K=\big(F\cup B[x,\varepsilon]\big)\dot{\cup}K$$
	in the second case we conclude
	$$X\backslash X_n=F\dot{\cup}\big(B[x,\varepsilon]\cup K\big).$$
	Since $B[x,\varepsilon]\neq\emptyset$, the latter case cannot occur by Proposition \ref{preparation dense range} i). In the first case it follows again from Proposition \ref{preparation dense range} i) that $K=\emptyset$.
	
	Thus, for every decomposition of $X\backslash(X_n\cup B(x,\varepsilon))$ as in (\ref{Lax-Malgrange decomposition}) it follows that $K=\emptyset$. Therefore, by the Lax-Malgrange Theorem (see e.g.\ \cite[Theorem 4.4.5 combined with the remark preceding Corollary 4.4.4 resp.\ with Theorem 8.6.1]{Hoermander} or \cite[Theorem 3.10.7]{Narasimhan}) it follows that the restriction map
	$$r_X^{X_n\cup B(x,\varepsilon)}:C_P^\infty(X)\rightarrow C_P^\infty(X_n\cup B(x,\varepsilon))$$
	has dense range.
	
	Next, we prove the claim for case ii). Since $X$ is supposed to be $P$-convex for supports, as in the proof of Theorem \ref{good exhaustion}, it follows that $X_n, n\in\N,$ is $P$-convex for supports as well, i.e.\ $P(\partial)$ is surjective on $C^\infty(X_n)$. Since $B(x,\varepsilon)$ is convex, $P(\partial)$ is surjective on $C^\infty(B(x,\varepsilon))$ and because $B(x,\varepsilon)$ and $X_n$ are disjoint, $P(\partial)$ is surjective on $X_n\cup B(x,\varepsilon)$, thus $X_n\cup B(x,\varepsilon)$ is $P$-convex for supports.
	
	Assume that for some characteristic hyperplane
	$$H=\{x\in\R^d;\,\langle N,x\rangle=\alpha\}$$
	for $P$, i.e.\ $N\in\R^d\backslash\{0\}, \alpha\in\R$, and $P_m(N)=0$, there is a non-empty, compact connected component of $(\R^d\backslash (X_n\cup B(x,\varepsilon)))\cap H$ which is entirely contained in $X$. Then, by Proposition \ref{preparation dense range} ii) there is a compact subset $\hat{C}$ of $X\cap H$ such that
	$$\forall\,v\in\partial_H\hat{C}, y\in\hat{C}\backslash\partial_H\hat{C}:\dist(v,\R^d\backslash X)>\dist(y,\R^d\backslash X),$$
	where $\partial_H\hat{C}$ denotes the boundary of $\hat{C}$ in $H$. By \cite[Theorem 10.8.1]{Hoermander} this contradicts the $P$-convexity for supports of $X$.
	
	Thus, for every characteristic hyperplane $H$ for $P$ there is no non-empty, compact connected component of $(\R^d\backslash (X_n\cup B(x,\varepsilon)))\cap H$ which is entirely contained in $X$. From \cite[Theorem 1]{Kalmes18} it follows that $r_X^{X_n\cup B(x,\varepsilon)}$ has dense range.
\end{proof}

We are now ready to characterize ($m$-)topologizability and power boundedness of weighted composition operators on kernels of certain partial differential operators. We begin with kernels of elliptic operators.

\begin{theorem}\label{power bounded elliptic}
	Let $P\in\C[X_1,\ldots,X_d], d\geq 2,$ be elliptic with $P(0)=0$, $X\subseteq\R^d$ be open, $w\in C(X)$ and $\psi:X\rightarrow X$ be continuous such that $w$ does not vanish identically on any connected component of $X$, $\psi$ is locally injective, and $C_{w,\psi}$ is well-defined on $C_P^\infty(X)$.
	\begin{enumerate}
		\item[a)] The following are equivalent.
			\begin{enumerate}
				\item[i)] $C_{w,\psi}$ is $m$-topologizable on $C_P^\infty(X)$.
				\item[ii)] $C_{w,\psi}$ is topologizable on $C_P^\infty(X)$.
				\item[iii)] $\psi$ has stable orbits.
			\end{enumerate}
		\item[b)] The following are equivalent.
		\begin{enumerate}
			\item[i)] $C_{w,\psi}$ is power bounded on $C_P^\infty(X)$.
			\item[ii)] $C_{w,\psi}$ is topologizable and $\{C_{w,\psi}^m(w);\,m\in\N\}$ is bounded in $C(X)$.
		\end{enumerate}
	\end{enumerate}
\end{theorem}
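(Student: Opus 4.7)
The plan is to reduce both parts to Corollary \ref{characterizing topologizability for compact-open} and Theorem \ref{power boundedness for compact-open}, applied to the sheaf $\mathscr{F}=C_P^\infty$ on $\R^d$. By Example \ref{examples of sheaves} iv), $C_P^\infty$ satisfies $(\mathscr{F}1)$, and since every elliptic polynomial is hypoelliptic, Malgrange's theorem (cf.\ Remark \ref{conditions from previous section}) ensures that the subspace topology from $C^\infty(X)$ coincides with the compact-open topology on $C_P^\infty(X)$. So the topological assumption of both results is met, and it remains to verify hypotheses a), b), c) of these results (and, for Theorem \ref{power boundedness for compact-open}, the requirement $w\in\mathscr{F}(X)$).

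The assumption $P(0)=0$ puts the constant function $1$ into $C_P^\infty(X)$, so $\delta_x(1)=1$ and hypothesis b) holds; moreover $w=C_{w,\psi}(1)\in C_P^\infty(X)$, which supplies the missing membership for part b). Because $P$ is elliptic with constant coefficients, elements of $C_P^\infty(X)$ are real analytic, so unique continuation on each connected component of $X$ forces the zero set of $w$ to be nowhere dense (as $w$ does not vanish identically on any component); thus $w^{-1}(\K\backslash\{0\})$ is dense in $X$. Since $\psi:X\to X$ is continuous and locally injective between open subsets of $\R^d$, Brouwer's Invariance of Domain Theorem implies $\psi$ is locally open, so Proposition \ref{denseness result} yields hypothesis c). For hypothesis a), I would take the exhaustion $(X_n)_{n\in\N}$ from Proposition \ref{preparation dense range}: given $n\in\N$, $x\in X\backslash\overline{X}_n$, and any open $W\subseteq X\backslash\overline{X}_n$ containing $x$, I choose $\varepsilon>0$ so small that $\overline{B(x,\varepsilon)}\subseteq W$. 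Ellipticity of $P$ ensures surjectivity of $P(\partial)$ on $C^\infty(X)$, hence $X$ is $P$-convex for supports, and Theorem \ref{dense range in special cases} i) shows that $r_X^{X_n\cup B(x,\varepsilon)}$ has dense range; so hypothesis a) holds with $U:=B(x,\varepsilon)$.

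With all hypotheses in place, part a) follows immediately from Corollary \ref{characterizing topologizability for compact-open}. For part b), the equivalence i)$\Leftrightarrow$iii) of Theorem \ref{power boundedness for compact-open} gives that $C_{w,\psi}$ is power bounded if and only if $C_{w,\psi}$ is topologizable and $\{C_{w,\psi}^m(w);\,m\in\N\}$ is bounded in $C_P^\infty(X)$. Since the topology on $C_P^\infty(X)$ is the compact-open topology inherited from $C(X)$, boundedness in $C_P^\infty(X)$ coincides with boundedness in $C(X)$, matching the formulation of the theorem. The main technical point is the verification of hypothesis c): ellipticity is used crucially, first to place $w$ in the kernel $C_P^\infty(X)$ (so that real analyticity is available), and then to invoke unique continuation to pass from ``not identically zero on any component'' to ``zero set nowhere dense''; without $P(0)=0$ this step would fail.
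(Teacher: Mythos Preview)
Your proof is correct and follows essentially the same route as the paper's: verify hypotheses a), b), c) of Corollary \ref{characterizing topologizability for compact-open} and Theorem \ref{power boundedness for compact-open} using Theorem \ref{dense range in special cases} i), the real analyticity of elliptic kernels together with $P(0)=0$ and unique continuation, and Proposition \ref{denseness result} via Brouwer's Invariance of Domain. The only superfluous step is your invocation of $P$-convexity for supports before Theorem \ref{dense range in special cases} i), which does not assume it; and in your closing commentary, note that it is $P(0)=0$ (not ellipticity) that places $w$ in $C_P^\infty(X)$, while ellipticity supplies the real analyticity needed for unique continuation.
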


\begin{proof}
	Since $P(0)=0$ it follows that constant functions belong to $C_P^\infty(X)$ so that from the hypothesis we conclude $w\in C_P^\infty(X)$. Since the kernel of an elliptic differential operator consists of real analytic functions (see \cite[Theorem 8.6.1]{Hoermander}) and since $w$ does not vanish identically on any connected component of $X$ it follows that $w^{-1}(\K\backslash\{0\})$ is dense in $X$. Moreover, by hypothesis on $\psi$ and Brouwer's Invariance of Domain Theorem it follows that condition ii) from Proposition \ref{denseness result} is satisfied so that Proposition \ref{denseness result} is applicable to conclude that condition c) from Corollary \ref{characterizing topologizability for compact-open} holds. As observed above, condition b) from Corollary \ref{characterizing topologizability for compact-open} is also valid. So having in mind that the topology of $C_P^\infty(X)$ is the compact-open topology we only have to show that condition a) of Corollary \ref{characterizing topologizability for compact-open} is fulfilled in order to prove part a) of the theorem. But this condition is satisfied due to Theorem \ref{dense range in special cases} i) so that a) follows. Part b) of the theorem follows from Theorem \ref{power boundedness for compact-open}.
\end{proof}

Our next aim is to characterize $m$-topologizability as well as power boundedness for weighted composition operators defined on kernels of certain non-elliptic operators. Part a) of the next theorem is in particular applicable to non-degenerate  parabolic operators like the heat operator while part b) covers solutions to the time dependent free Schr\"odinger equation. For a characterization of $P$-convexity for supports of an open subset $X\subseteq\R^d$ for those differential operators discussed in the next theorem, see \cite{Kalmes16}.

\begin{theorem}\label{one-dimensional characteristics}
	Let $d\geq 3, P\in\C[X_1,\ldots,X_d]\backslash\{0\}$ with principal part $P_m$ such that $\{\xi\in\R^d;\,P_m(\xi)=0\}$ is a one-dimensional subspace of $\R^d$ and assume that $P(0)=0$. Moreover, let $X\subseteq\R^d$ be open and $P$-convex for supports, and let $w\in C(X)$ and $\psi:X\rightarrow X$ be continuous such that $C_{w,\psi}$ is defined on $C_P^\infty(X)$.
	\begin{enumerate}
		\item[a)] Let $P$ be hypoelliptic and assume that
		$$\forall\,m\in\N_0:\,\{x\in X;\,w(\psi^m(x))\neq 0\}$$
		is dense in $X$. Then, the following are equivalent.
		\begin{enumerate}
			\item[i)] $C_{w,\psi}$ is $m$-topologizable on $C_P^\infty(X)$.
			\item[ii)] $C_{w,\psi}$ is topologizable on $C_P^\infty(X)$.
			\item[iii)] $\psi$ has stable orbits.
		\end{enumerate}
		Moreover, the following are equivalent, too.
		\begin{enumerate}
			\item[iv)] $C_{w,\psi}$ is power bounded on $C_P^\infty(X)$.
			\item[v)] $C_{w,\psi}$ is topologizable and $\{C_{w,\psi}^m(w);\,m\in\N\}$ is bounded in $C(X)$.
		\end{enumerate}
		
		\item[b)] Let $P$ be not hypoelliptic and assume that
		$$\forall\,l\in\N_0:\,\{x\in X;\,w(\psi^l(x))\neq 0\}$$
		is dense in $X$ and that $\psi_c\in C_P^\infty(X)$ for all $1\leq c\leq d$. Then, the following are equivalent.
		\begin{enumerate}
			\item[i)] $C_{w,\psi}$ is power bounded on $C_P^\infty(X)$.
			\item[ii)] $C_{w,\psi}$ is topologizable and $\{C_{w,\psi}^m(w);\,m\in\N\}$ as well as $\{C_{w,\psi}^m(\psi_c);\,m\in\N\}, 1\leq c\leq d,$ are bounded in $C^\infty(X)$.
			\item[iii)] $\psi$ has stable orbits and $\{C_{w,\psi}^m(w);\,m\in\N\}$, $\{C_{w,\psi}^m(\psi_c);\,m\in\N\}, 1\leq c\leq d,$ are bounded in $C^\infty(X)$.
		\end{enumerate}
	\end{enumerate}
\end{theorem}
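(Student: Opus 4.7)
The plan is to reduce both parts to the general framework of section \ref{power boundedness} applied to the sheaf $C_P^\infty$ on $\R^d$. In part a), the polynomial $P$ is hypoelliptic, so by the Malgrange result recalled in Remark \ref{conditions from previous section} the topology of $C_P^\infty(X)$ coincides with the compact-open topology; this puts us in the setting of Corollary \ref{characterizing topologizability for compact-open} (which will yield i)--iii)) and Theorem \ref{power boundedness for compact-open} (which will yield iv)--v)). In part b) hypoellipticity fails, but $C_P^\infty$ still forms a sheaf of smooth functions carrying the relative $C^\infty$-topology of $C^\infty(X)$, so the appropriate machinery is Theorem \ref{power boundedness for smooth} with $r = \infty$.

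To invoke these theorems I must verify the property $(\mathscr{F}1)$ and hypotheses a), b), c). Property $(\mathscr{F}1)$ was established for $C_P^\infty$ in Example \ref{examples of sheaves} iv). Hypothesis b) holds because $P(0) = 0$ forces the constant function $1$ to lie in $C_P^\infty(X)$, so $\delta_x(1) = 1 \neq 0$; the same observation combined with $w = C_{w,\psi}(1)$ gives $w \in C_P^\infty(X)$, which supplies the membership $w \in \mathscr{F}(X)$ required by Theorems \ref{power boundedness for compact-open} and \ref{power boundedness for smooth}. Hypothesis c) is assumed outright in each part, and the further membership $\psi_c \in C_P^\infty(X)$ needed by Theorem \ref{power boundedness for smooth} is precisely the added assumption in part b). For hypothesis a) I take the exhaustion $(X_n)_{n\in\N}$ from Proposition \ref{preparation dense range}; given $n \in \N$, $x \in X\setminus\overline{X}_n$, and an open $W \subseteq X\setminus\overline{X}_n$ containing $x$, I choose $\varepsilon > 0$ small enough that $\overline{B(x,\varepsilon)} \subseteq W$ and set $U := B(x,\varepsilon)$. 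Since $X$ is $P$-convex for supports, $d \geq 3$, and the real zero set of $P_m$ is a one-dimensional subspace of $\R^d$, Theorem \ref{dense range in special cases} ii) yields that $r_X^{X_n \cup U}$ has dense range, establishing a).

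With every hypothesis in hand, part a) is immediate from the two cited theorems. For part b), observe that since $C_P^\infty(X)$ carries the subspace topology of $C^\infty(X)$, a subset of $C_P^\infty(X)$ is bounded in $\mathscr{F}(X) = C_P^\infty(X)$ if and only if it is bounded in $C^\infty(X)$; this reconciles the boundedness clauses of ii) and iii) in part b) with the $\mathscr{F}(X)$-phrased conclusion of Theorem \ref{power boundedness for smooth}, and the proof is complete. The genuine technical obstacle is the verification of a), which rests on a Lax--Malgrange-type approximation together with the geometric control furnished by Proposition \ref{preparation dense range}; since this work has already been carried out in Theorem \ref{dense range in special cases} ii), the present proof reduces to straightforward bookkeeping.
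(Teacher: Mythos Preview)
Your proposal is correct and follows essentially the same route as the paper: the paper's own proof simply says to repeat the argument of Theorem \ref{power bounded elliptic}, replacing the reference to Theorem \ref{dense range in special cases} i) by Theorem \ref{dense range in special cases} ii) and, in part b), the reference to Theorem \ref{power boundedness for compact-open} by Theorem \ref{power boundedness for smooth}. You have spelled out precisely this reduction, including the verification of $w\in C_P^\infty(X)$ via $P(0)=0$, the check of hypothesis b) via the constant function $1$, and the choice $U=B(x,\varepsilon)$ with $\overline{B(x,\varepsilon)}\subseteq W\subseteq X\setminus\overline{X}_n$ so that Theorem \ref{dense range in special cases} ii) applies.
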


\begin{proof}
	Since $P(0)=0$ and since $C_{w,\psi}$ is defined on $C_P^\infty(X)$ it follows that $w\in C_P^\infty(X)$. Now, replacing the reference to Theorem \ref{dense range in special cases} i) by Theorem \ref{dense range in special cases} ii) (and in the proof of part b) the reference to Theorem \ref{power boundedness for compact-open} by Theorem \ref{power boundedness for smooth}) the theorem follows along the same arguments as does Theorem \ref{power bounded elliptic}.
\end{proof}

For the special case of the Cauchy-Riemann operator the next result is due to Bonet and Doma\'nski, see \cite{BonetDomanski11}.

\begin{theorem}\label{mean ergodic on hypoelliptic kernels}
	Let $P\in\C[X_1,\ldots,X_d]\backslash\{0\}, d\geq 2,$ be a hypoelliptic polynomial such that $\{\xi\in\R^d;\,P_m(\xi)=0\}$ is contained in a one-dimensional subspace of $\R^d$. Moreover, let $X\subseteq\R^d$ be $P$-convex for supports and let $\psi:X\rightarrow X$ be smooth such that the composition operator $C_\psi$ operates on $C_P^\infty(X)$. Moreover, let $d\geq 3$ if $P$ is not elliptic. Then the following are equivalent.
	\begin{enumerate}
		\item[i)] $C_\psi$ is power bounded.
		\item[ii)] $C_\psi$ is uniformly mean ergodic.
		\item[iii)] $C_\psi$ is mean ergodic.
		\item[iv)] $C_\psi$ is $m$-topologizable.
		\item[v)] $C_\psi$ is topologizable.
		\item[vi)] $\psi$ has stable orbits.
	\end{enumerate}
\end{theorem}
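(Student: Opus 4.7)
The plan is to deduce the theorem directly from Corollary \ref{mean ergodic for compact-open} applied to the sheaf $\mathscr{F}=C_P^\infty$ on $\R^d$. Once the hypotheses of that corollary are verified for the given $X$, the equivalence of i)--vi) follows at once, since by hypoellipticity of $P$ the space $C_P^\infty(X)$ in the compact-open topology is Fr\'echet-Montel.

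First I would recall that by Example \ref{examples of sheaves} iv) the sheaf $C_P^\infty$ satisfies $(\mathscr{F}1)$, that by the Malgrange result quoted at the start of this section the compact-open topology on $C_P^\infty(X)$ coincides with the subspace topology inherited from $C^\infty(X)$, and that $C_P^\infty(X)$, being a closed subspace of the nuclear Fr\'echet space $C^\infty(X)$, is itself nuclear and so a Fr\'echet-Montel space. Hypothesis b) of Corollary \ref{mean ergodic for compact-open}, namely $\mbox{ker}\,\delta_x\neq C_P^\infty(X)$ for all $x\in X$, is already covered by Remark \ref{conditions from previous section}: for any $\zeta\in\C^d$ with $P(\zeta)=0$ the exponential $e_\zeta(x)=\exp(\sum_{j=1}^d\zeta_j x_j)$ belongs to $C_P^\infty(X)$ and vanishes nowhere.

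The main step is to exhibit the exhaustion required in hypothesis a). I would take $(X_n)_{n\in\N}$ to be the canonical exhaustion
$$X_n:=\{x\in X;\,|x|<n\mbox{ and }\dist(x,\R^d\backslash X)>1/n\}$$
of Proposition \ref{preparation dense range}. Given $n\in\N$, $x\in X\backslash\overline{X}_n$ and an open $W\subseteq X\backslash\overline{X}_n$ with $x\in W$, I would choose $\varepsilon>0$ small enough that $\overline{B(x,\varepsilon)}\subseteq W$ and set $U:=B(x,\varepsilon)$. The required dense range of $r_X^{X_n\cup U}$ then comes from Theorem \ref{dense range in special cases}: part i) if $P$ is elliptic (using $d\geq 2$) and part ii) if $P$ is not elliptic (using $d\geq 3$ together with the assumed $P$-convexity for supports of $X$ and the one-dimensional characteristic variety assumption). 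This is exactly the step where the dimensional split $d\geq 2$ versus $d\geq 3$ in the hypotheses of the theorem is needed.

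With a) and b) in place, Corollary \ref{mean ergodic for compact-open} delivers the full chain of equivalences i)$\Leftrightarrow\cdots\Leftrightarrow$vi) without further work. The genuine obstacle is really bundled into Theorem \ref{dense range in special cases}: once dense range of the localized restriction maps is known, the abstract framework of Section \ref{power boundedness} does everything else, and no separate argument specific to the unweighted case or to mean ergodicity is required beyond invoking \cite[Proposition 2.8]{AlbaneseBonetRicker09} that is already built into Corollary \ref{mean ergodic for compact-open}.
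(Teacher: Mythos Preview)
Your proposal is correct and follows essentially the same approach as the paper's own proof: verify that $C_P^\infty(X)$ with the compact-open topology is a Fr\'echet--Montel space, check hypotheses a) and b) of Corollary \ref{mean ergodic for compact-open} (the former via Theorem \ref{dense range in special cases}, split into the elliptic and non-elliptic cases according to dimension), and then apply that corollary directly. Your write-up is in fact more detailed than the paper's, which compresses the verification of hypotheses a) and b) into a single sentence.
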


\begin{proof}
	As a closed subspace of the nuclear Fr\'echet space $C^\infty(X)$, the space $C_P^\infty(X)$ is a nuclear Fr\'echet space, in particular a Fr\'echet-Montel space. Since $P$ is hypoelliptic, the topology of $C_P(X)$ is the compact-open topology. By Theorem \ref{dense range in special cases}, hypothesis a) of Corollary \ref{mean ergodic for compact-open} is satisfied as it is trivially hypotheses b). Thus the claim follows from Corollary \ref{mean ergodic for compact-open}. 
\end{proof}

We close this section by a characterization of those weighted composition operators $C_{w,\psi}$ which are defined on the kernel of the heat operator in terms of the weight function $w$ and the symbol $\psi$. For an analogous result for the Cauchy-Riemann operator as well as for the Laplace operator, see \cite[Proposition 6.6]{Kalmes17}. In order to keep the usual notation employed in the context of the heat operator, we consider $\R^{d+1}$ and write the elements of $\R^{d+1}$ as $(t,x)$ or $(x_0,x)$ with $t,x_0\in\R$ and $x=(x_1,\ldots,x_d)\in\R^d$. Furthermore, we write $\nabla_x f$ and $\Delta_x f$ for $(\partial_1 f,\ldots,\partial_d f)$ and $\sum_{j=1}^d\partial_j^2 f$, respectively.

\begin{proposition}\label{invariance for the heat operator}
	Let $X\subseteq\R^{d+1}$ be open and let $w\in C^2(X)$ and $\psi:X\rightarrow X$ be $C^2$. Then, the following are equivalent.
	\begin{enumerate}
		\item[i)] $C_{w,\psi}$ is defined on $C_P^\infty(X)$ for $P(\xi)=\xi_0-\sum_{j=1}^d\xi_j^2$, i.e.\ on the kernel of the heat operator $\frac{\partial}{\partial t}-\Delta_x$.
		\item[ii)] The following conditions are satisfied.
		\begin{enumerate}
			\item[a)] $w\in C_P^\infty(X)$ as well as $w\psi_j\in C_P^\infty(X)$ for all $1\leq j\leq d$.
			\item[b)] $P(\partial)(w\psi_0)=w|\nabla_x\psi_1|^2$.
			\item[c)] $w|\nabla_x\psi_0|^2=0$ and $w|\nabla_x\psi_j|^2=w|\nabla_x\psi_k|^2$ for all $1\leq j,k\leq d$.
			\item[d)] $w\langle\nabla_x \psi_j,\nabla_x\psi_k\rangle=0$ for all $0\leq j\neq k\leq d$.
		\end{enumerate}
		
	\end{enumerate}
\end{proposition}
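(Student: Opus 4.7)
My plan is to reduce both implications to a single pointwise identity for $P(\partial)\bigl(w(f\circ\psi)\bigr)$, derived from the chain and Leibniz rules, and then to exploit the fact that the $2$-jets of heat polynomials fill the codimension-one affine subspace cut out by the single linear relation $\partial_0 f=\sum_{j=1}^{d}\partial_j^2 f$ coming from the heat equation itself.

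First, for arbitrary $f\in C^2(X)$ I would compute directly from the product rule and the chain rule the identity
\begin{align*}
P(\partial)\bigl(w(f\circ\psi)\bigr)(y)
&=(P(\partial)w)(y)\,f(\psi(y))+\sum_{k=0}^{d}A_k(y)\,(\partial_k f)(\psi(y))\\
&\quad -w(y)\sum_{k,l=0}^{d}(\partial_k\partial_l f)(\psi(y))\,\langle\nabla_x\psi_k(y),\nabla_x\psi_l(y)\rangle,
\end{align*}
valid for every $C^2$ function $f$, where
\[
A_k:=P(\partial)(w\psi_k)-(P(\partial)w)\,\psi_k=w\bigl(\partial_0\psi_k-\Delta_x\psi_k\bigr)-2\langle\nabla_x w,\nabla_x\psi_k\rangle.
\]
This is pure bookkeeping; the two representations of $A_k$ will both be used.

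Next I would establish the key structural input: at any point $x_0\in X$ the $2$-jets of functions in $C_P^\infty(X)$ exhaust the subspace of $J^2_{x_0}$ cut out by the single equation $\partial_0 f(x_0)=\sum_{j=1}^{d}\partial_j^2 f(x_0)$. This is verified by exhibiting translated heat polynomials realising each remaining independent jet variable: the constant $1$ gives the value; the linear monomials $x_k$ for $k\geq 1$ give $\partial_k f$; the polynomial $x_0+\tfrac{1}{2}x_1^2$ couples $\partial_0 f$ to the trace of the spatial Hessian; the mixed quadratics $x_j x_k$ with $1\leq j\neq k\leq d$ and the differences $x_j^2-x_k^2$ give the off-diagonal and the traceless diagonal spatial Hessian entries; the polynomial $x_0 x_k+\tfrac{1}{6}x_k^3$ realises $\partial_0\partial_k f$ independently; and $x_0^2+x_0 x_1^2+\tfrac{1}{12}x_1^4$ realises $\partial_0^2 f$ independently of everything else. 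Substituting the constraint $(\partial_0 f)(\psi(y))=\sum_{j=1}^{d}(\partial_j^2 f)(\psi(y))$ into the identity and reading off the coefficient of each free jet variable pointwise on $X$ yields: $P(\partial)w=0$ from the $f(\psi(y))$-term, so $w\in C_P^\infty(X)$; $A_k=0$ for $k\geq 1$, i.e.\ $w\psi_k\in C_P^\infty(X)$ since $P(\partial)w=0$, completing (a); $w|\nabla_x\psi_0|^2=0$ from the $\partial_0^2 f$-term; $w\langle\nabla_x\psi_0,\nabla_x\psi_k\rangle=0$ for $k\geq 1$ from the $\partial_0\partial_k f$-terms; the common value $A_0=w|\nabla_x\psi_j|^2$ for every $j=1,\dots,d$ from the $\partial_j^2 f$-terms, which gives the spatial equalities in (c) and, combined with $A_0=P(\partial)(w\psi_0)$ once $P(\partial)w=0$, yields (b); and finally $w\langle\nabla_x\psi_j,\nabla_x\psi_k\rangle=0$ for $1\leq j\neq k\leq d$ from the remaining off-diagonal Hessian terms. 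Together these are exactly (a)--(d). The converse follows immediately by substituting (a)--(d) into the identity and using $(\partial_0 f)(\psi(y))=\sum_{j}(\partial_j^2 f)(\psi(y))$ to collapse the $A_0$-contribution against the trace part of the Hessian term.

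The main obstacle I anticipate is the spanning statement for $2$-jets of heat polynomials; one must in particular realise $\partial_0^2 f(x_0)$ independently of the other jet data, which forces the use of heat polynomials of degree $\geq 4$ such as $x_0^2+x_0 x_1^2+\tfrac{1}{12}x_1^4$, while making sure that after suitable translation to $x_0$ the remaining free jet coordinates can be independently prescribed. Once this is in place the rest of the argument is a clean coefficient comparison.
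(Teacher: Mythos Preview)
Your proposal is correct and yields exactly the conditions (a)--(d). Both your argument and the paper's start from the same master identity
\[
P(\partial)\bigl(w(f\circ\psi)\bigr)=(P(\partial)w)\,f(\psi)+\sum_{k=0}^{d}A_k\,(\partial_k f)(\psi)-w\sum_{k,l=0}^{d}\langle\nabla_x\psi_k,\nabla_x\psi_l\rangle\,(\partial_k\partial_l f)(\psi),
\]
but the way the conditions are extracted is organized differently. The paper proceeds sequentially with complex exponentials $e_\zeta(x)=\exp(\sum_j\zeta_j x_j)$, $P(\zeta)=0$, choosing $\zeta$'s such as $(0,\ldots,1,\ldots,\pm i,\ldots)$, $(-1,\ldots,\pm i,\ldots)$ and even $(i,\sqrt{i},0,\ldots,0)$, and obtains each relation by adding and subtracting the $\pm$-variants and separating real and imaginary parts; the deduction of $w|\nabla_x\psi_0|^2=0$ in particular hinges on the real-valuedness of $w$ and $\psi$. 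Your route instead isolates the structural content once and for all: the $2$-jets of heat solutions at any point fill the hyperplane $\{\partial_0 f=\sum_j\partial_j^2 f\}$ of $J^2$, witnessed by explicit real heat polynomials (including the degree-four polynomial needed for $\partial_0^2 f$), so that after substituting the single constraint one may read off every coefficient simultaneously. This buys a cleaner and more transparent argument that avoids complex test functions and the real/imaginary splitting, and it makes evident that no further relations can arise; the paper's exponential method, on the other hand, is entirely self-contained and requires no preliminary spanning lemma.
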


\begin{proof}
	In order to simplify notation we denote the heat operator simply by $H$ and write $f(\psi)$ instead of $f\circ\psi$. A straightforward calculation shows that for every $u\in C^2(X)$
	\begin{align}\label{heat of composition}
		H(C_{w,\psi}(u))&=H(w)\cdot u(\psi)+w\sum_{l=0}^d\partial_l u(\psi) H(\psi_l)\nonumber\\
		&-2\sum_{l=0}^d \partial_l u(\psi)\langle\nabla_x w,\nabla_x \psi_l\rangle-w\sum_{l,m=0}^d\partial_l\partial_m u(\psi)\langle\nabla_x \psi_l,\nabla_x \psi_m\rangle
	\end{align}
	Assume that i) holds. Since $u=1\in C_P^\infty(X)$ we obtain $w\in C_P^\infty(X)$. Using this and that $u(x)=x_c\in C_P^\infty(X), 1\leq c\leq d,$ we derive $0=H(w\psi_c)=wH(\psi_c)-2\langle\nabla_x w,\nabla_x\psi_c\rangle$ for all $1\leq c\leq d$.
	
	For $\zeta\in\C^{d+1}$ we denote $e_\zeta(x):=\exp(\sum_{j=0}^d\zeta_j x_j)$. For $j,k\in\{1,\ldots,d\}, j\neq k$ let $\zeta_j=1,\zeta_k=\pm i$ and $\zeta_l=0$ for $l\neq j,k$. Then $e_\zeta\in C_P^\infty(X)$ and (\ref{heat of composition}) gives
	\begin{align*}
		0&=w\big(H(\psi_j)\pm i H(\psi_k)\big)-2\big(\langle\nabla_x w,\nabla_x\psi_j\rangle\pm i\langle\nabla_x w,\nabla_x\psi_k\rangle\big)\\
		&\quad -w\big(|\nabla_x \psi_j|^2-|\nabla_x\psi_k|^2\pm 2i\langle\nabla_x\psi_j,\nabla_x\psi_k\rangle\big).
	\end{align*}
	Substracting the "-" version of the previous equality from the "+" version and using $0=w H(\psi_k)-2\langle\nabla_x w,\nabla_x\psi_k\rangle$ we obtain
	\begin{align*}
		0&= w2iH(\psi_k)-4i\langle \nabla_x w,\nabla_x\psi_k\rangle -4i w\langle\nabla_x\psi_j,\nabla_x\psi_k\rangle\\
		&=-4i w\langle\nabla_x\psi_j,\nabla_x\psi_k\rangle,
	\end{align*}
	thus
	\begin{equation}\label{equation 1}
		\forall\,1\leq j\neq k\leq d:\,w\langle\nabla_x\psi_j,\nabla_x\psi_k\rangle=0.
	\end{equation}
	Next, for fixed $j\in\{1,\ldots,d\}$ and $\zeta_0=-1, \zeta_j=\pm i$, and $\zeta_k=0$ for $k\neq 0,j$ we have $e_\zeta\in C_P^\infty(X)$ and evaluating (\ref{heat of composition}) yields
	\begin{align}\label{equation 2}
		0&= -w H(\psi_0)\pm i w H(\psi_j)+2\langle\nabla_x w,\nabla_x\psi_0\rangle\mp 2i\langle \nabla_x w,\nabla_x\psi_j\rangle\nonumber\\
		&\quad -w\big(|\nabla_x \psi_0|^2-|\nabla_x\psi_j|^2\mp 2i\langle\nabla_x\psi_0,\nabla_x\psi_j\rangle\big)\\
		&=-w H(\psi_0)+2\langle\nabla_x w,\nabla_x\psi_0\rangle-w\big(|\nabla_x \psi_0|^2-|\nabla_x\psi_j|^2\mp 2i\langle\nabla_x\psi_0,\nabla_x\psi_j\rangle\big),\nonumber
	\end{align}
	where we have used $0=H(w\psi_j)=wH(\psi_j)-2\langle\nabla_x w,\nabla_x\psi_j\rangle$. Substracting again the "-" version from the "+" version of this equality we obtain $0=-w 4i\langle\nabla_x\psi_0,\nabla_x\psi_j\rangle$, i.e.\
	\begin{equation}\label{equation 3}
		\forall\,1\leq j\leq d:\,w \langle\nabla_x\psi_0,\nabla_x\psi_j\rangle=0.
	\end{equation}
	Using $H(w)=0$, $0=H(w\psi_c)=wH(\psi_c)-2\langle\nabla_x w,\nabla_x\psi_c\rangle, c=1,\ldots,d$, (\ref{equation 1}), and (\ref{equation 3}) it follows that (\ref{heat of composition}) reduces to
	\begin{align}\label{heat of composition simplified}
		H(C_{w,\psi}(u))&=w\sum_{l=0}^d\partial_l u(\psi) H(\psi_l)-2\sum_{l=0}^d\partial_l u(\psi)\langle\nabla_x w,\nabla_x\psi_l\rangle\nonumber\\
		&\quad -w\sum_{l=0}^d\partial_l^2 u(\psi)|\nabla_x\psi_l|^2\\
		&=\partial_0 u(\psi)\big(w H(\psi_0)-2\langle\nabla_x w,\nabla_x\psi_0\rangle\big)-w\sum_{l=0}^d\partial_l^2 u(\psi)|\nabla_x\psi_l|^2\nonumber.
	\end{align}
	Next, we set $\zeta_j=1, \zeta_k=i$ for $1\leq j\neq k\leq d$ as well as $\zeta_l=0$ for $l\neq j,k$. Then $e_\zeta\in C_P^\infty(X)$ and evaluating (\ref{heat of composition simplified}) yields
	\begin{equation}\label{equation 4}
		0=-w(|\nabla_x\psi_j|^2-|\nabla_x\psi_k|^2)
	\end{equation}
	because $\delta_0 e_\zeta=0$, so that (\ref{heat of composition simplified}) further simplifies to
	\begin{align}\label{heat of composition simplified 2}
		H(C_{w,\psi}(u))&=\partial_0 u(\psi)\big(w H(\psi_0)-2\langle\nabla_x w,\nabla_x\psi_0\rangle\big)\nonumber\\
		&\quad -w\big(\partial_0^2 u(\psi)|\nabla_x\psi_0|^2+|\nabla_x\psi_1|^2\Delta_x u(\psi)\big).
	\end{align}
	Finally, we choose $\zeta_0=i, \zeta_1=\sqrt{i},$ for any square root of $i$, and $\zeta_k=0$ for $k\neq 0,1$. Then $e_\zeta\in C_P^\infty(X)$ so that from (\ref{heat of composition simplified 2}) we derive
	\begin{align*}
		0&=i\big(w H(\psi_0)-2\langle\nabla_x w,\nabla_x\psi_0\rangle\big)-w\big(-|\nabla_x\psi_0|^2+i|\nabla_x\psi_1|^2\big).
	\end{align*}
	Because $w,\psi_0$, and $\psi_1$ are real valued it follows that
	\begin{equation}\label{psi_0 only depends on x_0}
		0=w|\nabla_x\psi_0|^2=w\sum_{j=1}^d (\partial_j\psi_0)^2,
	\end{equation}
	i.e.\ on the set $\{x\in X;\,w(x)\neq 0\}$ $\psi_0$ only depends on $x_0$. With this, (\ref{heat of composition simplified 2}) simplifies even further to
	\begin{equation}\label{heat of composition simplified 3}
		H(C_{w,\psi}(u))=\partial_0 u(\psi) \big(w H(\psi_0)-2\langle\nabla_x w,\nabla_x\psi_0\rangle\big)-\Delta_x u(\psi) w|\nabla_x\psi_1|^2.
	\end{equation}
	Inserting any $u\in C_P^\infty(X)$ for which $\partial_0 u$ does not have any zero in $X$ into the previous equality, since $\partial_0 u=\Delta_x u$, we obtain
	\begin{equation}\label{last piece}
		H(w\psi_0)-w|\nabla_x\psi_1|^2=0.
	\end{equation}
	Thus, i) implies ii) by $H(w)=0$, $H(w\psi_j)=0, 1\leq j\leq d$, (\ref{last piece}), (\ref{psi_0 only depends on x_0}), (\ref{equation 4}), (\ref{equation 1}), and (\ref{equation 3}).
	
	On the other hand, if ii) a), c), and d) hold, it follows that (\ref{heat of composition}) simplifies to (\ref{heat of composition simplified 3}) which by b) implies i).
\end{proof}

\section{Weighted composition operators on $C(X)$ as generators of strongly continuous semigroups}\label{generators}

In this final section we combine recent results by Goli\'nska and Wegner \cite{GolinskaWegner} as well as by Frerick et al.\ \cite{FrerickJordaKalmesWengenroth} with the results from section \ref{power boundedness} to characterize under mild additional assumptions on the weight and the symbol those weighted composition operators on $C(X)$ which are generators of strongly continuous operator semigroups on $C(X)$.

The definition and basic results for semigroups of operators on locally convex spaces $E$ are the same as for Banach spaces, see e.g.\ \cite{Yosida}. However, contrary to the case of Banach spaces, on a locally convex space $E$ not every continuous linear operator $A$ generates a strongly continuous semigroup $T=(T_t)_{t\geq 0}$ and even if it does, the semigroup $T$ need not be of the form $T_t(x)=\exp(t A)x=\sum_{k=0}^\infty\frac{t^k}{k!}A^k(x)$ (see e.g.\ \cite[Example 1, Example 4]{FrerickJordaKalmesWengenroth}).

\begin{theorem}\label{generator}
	Let $X$ be a locally compact, $\sigma$-compact, non-compact Hausdorff space, $w\in C(X)$, $\psi:X\rightarrow X$ continuous such that $w^{-1}(\K\backslash\{0\})$ is dense in $X$ and such that for every $x\in X$ there is an open neighborhood $U_x$ of $x$ in $X$ such that $\psi_{|U_x}$ is injective and open. Then the following are equivalent when $C(X)$ is equipped with the compact-open topology.
	\begin{enumerate}
		\item[i)] $C_{w,\psi}$ generates a uniformly continuous semigroup on $C(X)$.
		\item[ii)] $C_{w,\psi}$ generates a strongly continuous semigroup on $C(X)$.
		\item[iii)] $C_{w,\psi}$ is $m$-topologizable on $C(X)$.
		\item[iv)] $C_{w,\psi}$ is topologizable on $C(X)$.
		\item[v)] $\psi$ has stable orbits.
	\end{enumerate}
	Moreover, if $C_{w,\psi}$ generates a strongly continuous semigroup $T=(T_t)_{t\geq 0}$ on $C(X)$, then
	$$\forall\,t\geq 0, f\in C(X): T_t(f)=\exp(t A)f=\sum_{k=0}^\infty \frac{t^k}{k!}C_{w,\psi}^k(f),$$
	where the series converges in the compact-open topology.
\end{theorem}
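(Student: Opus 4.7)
The plan is to establish the equivalence via a short circular chain, with the main novelty being the implication from strong continuity to topologizability. First, I would settle the equivalence $(iii)\Leftrightarrow (iv) \Leftrightarrow (v)$ by applying Corollary \ref{characterizing topologizability for compact-open} to the sheaf $C$ of continuous functions on $\Omega=X$, equipped with the compact-open topology. Hypothesis b) of that corollary is trivial since constants lie in $C(X)$ and $\delta_x(1)=1$; hypothesis c) follows from Proposition \ref{denseness result} once we invoke our assumptions that $w^{-1}(\K\setminus\{0\})$ is dense in $X$ and that $\psi$ is locally injective and open; hypothesis a) is verified by Tietze's extension theorem exactly as in the proof of Corollary \ref{continuous functions}.

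Next, I would prove $(iii)\Rightarrow (i)$ by applying the Goli\'nska--Wegner theorem \cite[Theorem 1]{GolinskaWegner}, since $C(X)$ with the compact-open topology is a Fr\'echet space, hence sequentially complete. The implication $(i)\Rightarrow (ii)$ is trivial because a uniformly continuous semigroup is a fortiori strongly continuous.

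The core step is $(ii)\Rightarrow (iv)$. The plan here is to combine a result of Frerick--Jord\'a--Kalmes--Wengenroth \cite{FrerickJordaKalmesWengenroth} with a Banach--Steinhaus argument. The former yields that whenever a continuous linear operator $A$ on $C(X)$ generates a strongly continuous semigroup $T=(T_t)_{t\geq 0}$, this semigroup must be given by the exponential series $T_tf=\sum_{k=0}^\infty \tfrac{t^k}{k!}C_{w,\psi}^k(f)$ with convergence in the compact-open topology. Consequently, for each $f\in C(X)$ the sequence $\{C_{w,\psi}^k(f)/k!\}_{k\in\N_0}$ converges to $0$ in $C(X)$ (taking $t=1$), and in particular is bounded in $C(X)$. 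Since $C(X)$ is a Fr\'echet, hence barreled, space, the Banach--Steinhaus theorem applied to the family $\{C_{w,\psi}^k/k!;\,k\in\N_0\}$ yields its equicontinuity: for every compact $K\subseteq X$ there exist a compact $L\subseteq X$ and $M>0$ such that
\[
\|C_{w,\psi}^k(f)\|_K\leq M\, k!\,\|f\|_L\quad\text{for all }f\in C(X),\,k\in\N_0,
\]
which is exactly the topologizability of $C_{w,\psi}$. Combined with the first paragraph, this completes the circle.

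Once the equivalence is established, the ``moreover'' statement follows directly from the Goli\'nska--Wegner construction: the uniformly continuous semigroup generated by an $m$-topologizable operator is explicitly given by the exponential series, whose convergence in the defining (uniform) topology of \cite[Theorem 1]{GolinskaWegner} implies convergence in the coarser compact-open topology. The main obstacle in this plan is locating and applying the precise statement from \cite{FrerickJordaKalmesWengenroth} giving the series representation of $T$: without it, recovering any bound on iterates of $C_{w,\psi}$ from mere strong continuity of $T$ is not automatic in the locally convex setting, because the usual Banach-space estimate $\|T_t\|\leq Me^{\omega t}$ has no direct analogue here.
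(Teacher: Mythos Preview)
Your argument is correct, and it largely parallels the paper's proof: the equivalences $(iii)\Leftrightarrow(iv)\Leftrightarrow(v)$ via Corollary~\ref{characterizing topologizability for compact-open}, the implication $(iii)\Rightarrow(i)$ via \cite[Theorem~1]{GolinskaWegner}, and the appeal to \cite{FrerickJordaKalmesWengenroth} for the exponential series representation are all handled the same way. The one genuine difference is in closing the loop from $(ii)$. The paper proves $(ii)\Rightarrow(v)$ directly: it first verifies that $C(X)$ is a quojection (via Tietze), then invokes \cite[Theorem~2]{FrerickJordaKalmesWengenroth} to obtain the structural condition that for each compact $K_n$ there is $K_m$ with $f_{|K_m}=0\Rightarrow C_{w,\psi}^k(f)_{|K_n}=0$ for all $k$, and finally feeds in an Urysohn function vanishing on $K_m$ and equal to $1$ outside $K_{m+1}$ to force $\psi^k(K_n)\subseteq K_{m+1}$. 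You instead prove $(ii)\Rightarrow(iv)$ by using the exponential series to see that $\{C_{w,\psi}^k/k!\}$ is pointwise bounded and then invoking Banach--Steinhaus on the barreled space $C(X)$ to get equicontinuity, hence topologizability with $\gamma_k=Mk!$. Your route is shorter and more functional-analytic; the paper's route has the advantage of extracting stable orbits directly from the dynamics without passing back through Corollary~\ref{characterizing topologizability for compact-open}.

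Two minor points. First, the applicability of \cite[Theorem~2]{FrerickJordaKalmesWengenroth} rests on $C(X)$ being a quojection; the paper checks this explicitly, and you should too rather than leaving it implicit. Second, your derivation of the ``moreover'' clause via the Goli\'nska--Wegner construction is slightly circuitous (it tacitly uses uniqueness of the semigroup with a given generator): since you have already invoked \cite{FrerickJordaKalmesWengenroth} to obtain the series representation of $T$ in your $(ii)\Rightarrow(iv)$ step, the ``moreover'' statement is in fact already established there.
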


\begin{proof}
	Let $(K_n)_{n\in\N}$ be a compact exhaustion of $X$ such that the interior of $K_n$ is not empty for each $n\in\N$. For $m,n\in\N, m\geq n,$ let $\pi_m^n:C(K_m)\rightarrow C(K_n), f\mapsto f_{|K_n}$. Equipping $C(K_n)$ etc. with the supremum norm, $\pi_m^n$ is a linear contraction and $C(X)$ is topologically isomorphic to the closed subspace
	$$\{(f_n)_{n\in\N}\in\prod_{n\in\N}C(K_n);\,\pi_m^n(f_m)=f_n\mbox{ for all }n\leq m\}$$
	of the product space $\prod_n C(K_n)$. By Tietze's Extension Theorem, each $\pi_m^n, n\leq m$ is surjective so that $C(X)$ is a quojection.
	
	It follows from \cite[Theorem 2]{FrerickJordaKalmesWengenroth} that i) and ii) are equivalent and that in case $C_{w,\psi}$ generates a strongly continuous semigroup it is given by the exponential series. 
	
	Moreover, if $C_{w,\psi}$ generates a strongly continuous semigroup on $C(X)$ it follows from \cite[Theorem 2]{FrerickJordaKalmesWengenroth} that
	\begin{equation}\label{FJKW-1}
		\forall\,n\in\N\,\exists\,m\in\N\,\forall\,k\in\N, f\in C(X):\,f_{|K_m}=0\Rightarrow C_{w,\psi}^k(f)_{|K_n}=0.
	\end{equation}
	Since
	$$C_{w,\psi}^k(f)=\prod_{j=0}^{k-1}w(\psi^j(\cdot)) f(\psi^k(\cdot))$$
	it follows from the hypothesis on $w$ and $\psi$, together with Proposition \ref{denseness result}, that
	$$C_{w,\psi}^k(f)_{|K_n}=0\Rightarrow \forall\,x\in K_n:\,f(\psi^k(x))=0,$$
	so that (\ref{FJKW-1}) becomes
	\begin{equation}\label{FJKW-2}
		\forall\,n\in\N\,\exists\,m\in\N\,\forall\,k\in\N, f\in C(X):\,f_{|K_m}=0\Rightarrow f(\psi^k(\cdot))_{|K_n}=0.
	\end{equation}
	Now we fix $n\in\N$ and choose $m\in\N$ according to (\ref{FJKW-2}). By Urysohn's Lemma, there is $f\in C(X)$ such that $f_{|K_m}=0$ and $f_{|X\backslash\mbox{int}(K_{m+1})}=1$, where $\mbox{int}(K_{m+1})$ denotes the interior of $K_{m+1}$. Evaluating (\ref{FJKW-2}) for this particular $f$ yields
	$$\forall\, k\in\N:\,\psi^k(K_n)\subseteq K_{m+1}.$$
	Since $n\in\N$ was arbitrary, it follows that ii) implies v).
	
	With the aid of Tietze's Extension Theorem it follows from Corollary \ref{characterizing topologizability for compact-open} that iii), iv), and v) are equivalent, so that it only remains to show that iii) implies ii). However, this implication follows immediately from \cite[Theorem 1]{GolinskaWegner}.
\end{proof}

\subsection*{Acknowledgements}
The author would like to thank E.\ Jord\'a and D.\ Jornet from the Universitat Polit\`ecnica de Val\`encia for stimulating discussions about the topic of this article. Moreover, the author is very grateful to his colleagues from Val\`encia for the cordial hospitality during his numerous stays there. Finally, the author would like to thank the anonymous referee for a careful reading of the article and for his comments and suggestions which helped to improve the presentation of the results.


\begin{thebibliography}{99}
	
	\bibitem{AlbaneseBonetRicker09}
		A.\ Albanese, J.\ Bonet, W.\ Ricker,
		\emph{Mean ergodic operators in Fr\'echet spaces},
		Ann.\ Acad.\ Sci.\ Fenn.\ Math.\ {34} (2009), no.~2, 401--436.

	\bibitem{AlbaneseBonetRicker09b}
		A.\ Albanese, J.\ Bonet, W.\ Ricker,
		\emph{On mean ergodic operators},
		in: Vector Measures, Integration and Related Topics, G.P.\ Cubera et.\ al.\ (Eds.), pp.\ 1-20 in Operator Theory: Advances and Applications, Vol.\ 201, Birkh\"auser Verlag, Basel, 2009.
		
	\bibitem{BeltranGomezJordaJornet16b}
		M.J.\ Beltr\'an-Meneu, M.C.\ G\'omez-Callado, E.\ Jord\'a, D.\ Jornet,
		\emph{Mean ergodic composition operators on Banach spaces of holomorphic functions},
		J.\ Funct.\ Anal.\ {270} (2016), no.~12, 4369--4385.
		
	\bibitem{BeltranGomezJordaJornet16}
		M.J.\ Beltr\'an-Meneu, M.C.\ G\'omez-Callado, E.\ Jord\'a, D.\ Jornet,
		\emph{Mean ergodicity of weighted composition operators on spaces of holomorphic functions},
		J.\ Math.\ Anal.\ Appl.\ {444} (2016), no.~2, 1640--1651.
	
	\bibitem{Bonet07}
		J.\ Bonet,
		\emph{A problem on the structure of Fr\'echet spaces},
		Rev.\ R.\ Acad.\ Cienc.\ Exactas F\'is.\ Nat.\ Ser.\ A Math.\ RACSAM {104} (2010), no.~2, 427--434.
		
	\bibitem{BonetDomanski11}
		J.\ Bonet, P.\ Doma\'nski,
		\emph{A note on mean ergodic composition operators on spaces of holomorphic functions},
		Rev.\ R.\ Acad.\ Cienc.\ Exactas F\'is.\ Nat.\ Ser.\ A Math.\ RACSAM {105} (2011), no.~2, 389--396.
		
	\bibitem{BonetDomanski11b}
		J.\ Bonet, P.\ Doma\'nski,
		\emph{Power bounded composition operators on spaces of analytic functions},
		Collect.\ Math.\ {62} (2011), no.~1, 69--83.
		
	\bibitem{BonetJordaRodriguez18}
		J.\ Bonet, E.\ Jord\'a, A.\ Rodr\'iguez,
		\emph{Mean ergodic multiplication operators on weighted spaces of continuous functions},
		Mediterr.\ J.\ Math.\ {15} (2018), no.~3, Art.\ 108, 11 pp.\
		
	\bibitem{BonetRicker09}
		J.\ Bonet, W.\ Ricker,
		\emph{Mean ergodicity of multiplication operators on weighted spaces of holomorphic functions},
		Arch.\ Math.\ (Basel) {92} (2009), no.~5, 428--437.
	
		
		
	\bibitem{FonfLinWojtaszczyk01}
		V.P.\ Fonf, M.\ Lin, P.\ Wojtaszczyk,
		\emph{Ergodic characterizations of reflexivity in Banach spaces},
		J.\ Funct.\ Anal.\ {187} (2001), no.~1, 146--162.
		
	\bibitem{FrerickJordaKalmesWengenroth}
		L.\ Frerick, E.\ Jord\'a, T.\ Kalmes, J.\ Wengenroth,
		\emph{Strongly continuous semigroups on some Fr\'echet spaces},
		J.\ Math.\ Anal.\ Appl.\ {412} (2014), no.~1, 121--124.
		
	\bibitem{GolinskaWegner}
		A.\ Goli\'nska, S.A.\ Wegner,
		\emph{Non-power bounded generators of strongly continuous semigroups},
		J.\ Math.\ Anal.\ Appl.\ {436} (2016), no.~1, 429--438.

	\bibitem{GomezJordaJornet16}
		M.C.\ G\'omez-Collado, E.\ Jord\'a, D.\ Jornet,
		\emph{Power bounded composition operators on spaces of meromorphic functions},
		Topology Appl.\ {203} (2016), 141--146.

	\bibitem{Hoermander}
		L.\ H\"ormander,
		\emph{The Analysis of Linear Partial Differential Operators I and II},
		Classics in Mathematics,
		Springer, Berlin, 1983.
	
	\bibitem{Hoermander_Complex}
		L.\ H\"ormander,
		\emph{An Introduction to Complex Analysis in Several Variables},
		D.\ Van Nostrand Co., Inc., Princeton, N.J.\-Toronto, Ont.\-London, 1966.
		
	\bibitem{Kalmes16}
		T.\ Kalmes,
		\emph{Surjectivity of differential operators, linear topological invariants for spaces of zero solutions, and the problem of parameter dependence}, to appear in Rev.\ Mat.\ Compl.\ DOI: 10.1007/s13163-018-0266-5.
		
	\bibitem{Kalmes17}
		T.\ Kalmes,
		\emph{Dynamics of weighted composition operators on function spaces defined by local properties},
		to appear in Stud.\ Math., arXiv-Preprint 1712.06110, 2017.
		
	\bibitem{Kalmes18}
		T.\ Kalmes,
		\emph{An approximation theorem of Runge type for certain non-elliptic partial differential operators}, arXiv-preprint 1804.08099, 2018.
		
	\bibitem{Krengel}
		U.\ Krengel,
		\emph{Ergodic Theorems},
		de Gruyter \& Co.\ Berlin, 1985.
		
	\bibitem{MeiseVogt}
		R.\ Meise, D.\ Vogt,
		\emph{Introduction to Functional Analysis},
		Oxford Graduate Texts in Mathematics, 2, The Clarendon Press, Oxford University Press, New York, 1997.
		
	\bibitem{Narasimhan}
		R.\ Narasimhan,
		\emph{Analysis on Real and Complex Manifolds},
		Advanced Studies in Pure Mathematics, Vol.\ 1,
		North-Holland Publishing Co., Amsterdam, 1968.

	\bibitem{Treves}
		F.\ Tr\`eves,
		{\it Locally Convex spaces and Linear Partial Differential Equations},
		Die Grundlehren der mathematischen Wissenschaften, Band 146, Springer, New York, 1967.
	
	\bibitem{Wengenroth}
		J.\ Wengenroth,
		\emph{Derived Functors in Functional Analysis},
		Lecture Notes in Mathematics, 1810, Springer, Berlin, 2003.
		
	\bibitem{Wengenroth2}
		J.\ Wengenroth,
		\emph{Topological properties of kernels of partial differential operators},
		Rocky Mountain J.\ Math.\ {44} (2014), no.~3, 1037--1052.
		
	\bibitem{Wolf12a}
		E.\ Wolf,
		\emph{Power bounded weighted composition operators},
		New York J.\ Math.\ {18} (2012), 201--212.
		
	\bibitem{Wolf12b}
		E.\ Wolf,
		\emph{Power bounded composition operators},
		Comput.\ Methods Funct.\ Theory {12} (2012), no.~1, 105--117.
		
	\bibitem{Wolf15}
		E.\ Wolf,
		\emph{Power bounded composition operators in several variables},
		Rom.\ J.\ Math.\ Comput.\ Sci.\ {5} (2015), no.~1, 1--12.
				
	\bibitem{Yahdi}
		M.\ Yahdi,
		\emph{Super-ergodic operators},
		Proc.\ Amer.\ Math.\ Soc.\ {134} (2006), no.~9, 2613--2620.
		
	\bibitem{Yosida}
		K.\ Yosida,
		{\it Functional Analysis},
		Springer-Verlag, Berlin, 1995, reprint of the sixth (1980) edition.
		
	\bibitem{Zajac}
		S.\ Zaj\k{a}c,
		\emph{Hypercyclicity of composition operators in Stein manifolds},
		Proc.\ Amer.\ Math.\ Soc.\ {144} (2016), no.~9, 3991--4000.
		
	\bibitem{Zelazko02}
		W.\ \.{Z}elazko,
		\emph{When is $L(X)$ topologizable as a topological algebra?}
		Stud.\ Math.\ {150} (2002), no.~3, 295--303.
		
	\bibitem{Zelazko07}
		W.\ \.{Z}elazko,
		\emph{Operator algebras on locally convex spaces},
		in: Topological Algebras and Applications, in: Contemp.\ Math., vol.\ 427, Amer.\ Math.\ Soc., Providence, RI, 2007, pp.\ 431--442.
\end{thebibliography}
\end{document}